\numberwithin{equation}{section}
\numberwithin{figure}{section}
\definecolor{myblue}{rgb}{0.6, 0.9, 1}
\newcommand{\Rmnum}[1]{\expandafter\@slowromancap\romannumeral #1@}
\definecolor{myblue}{rgb}{0.6, 0.9, 1}
\definecolor{mygreen}{rgb}{0,0,1}
\definecolor{purple}{rgb}{0.6,0.2,1}
\definecolor{orange}{rgb}{0.8,0,0.2}
\newcommand{\Z}{\mathbb{Z}}
\theoremstyle{plain}
\newtheorem{theorem}{Theorem}[section]
\newtheorem{proposition}[theorem]{Proposition}
\newtheorem{lemma}[theorem]{Lemma}
\newtheorem{corollary}[theorem]{Corollary}
\theoremstyle{definition}
\newtheorem{definition}{Definition}[section]
\theoremstyle{remark}
\newtheorem{remark}[theorem]{Remark}
\newtheorem{assumption}[definition]{Assumption}
\def\Z{\mathbb{Z}}
\def\Z2{\mathbb{Z}_{2}}
\def\m2#1{\ ({\rm mod} \ 2^{#1})}
\begin{document}

\title[The $p$-adic subhyperbolic rational maps]{Expanding property and  statistical laws for $p$-adic subhyperbolic rational maps} 

\author{Shilei Fan}
\address{School of Mathematics and Statistics, and Key Laboratory of Nonlinear Analysis \& Applications (Ministry of Education), Central China Normal University, Wuhan 430079, P. R. China}  \email{slfan@mail.ccnu.edu.cn}

\author{Lingmin Liao}
\address{School of Mathematics and Statistics, Wuhan University, Wuhan 430072, Hubei, China}
\email{lmliao@whu.edu.cn}

\author{Hongming Nie}
\address{Institute for Mathematical Sciences, Stony Brook University, Stony Brook, NY 11794-3660, USA}
\email{hongming.nie@stonybrook.edu}

\author{Yuefei Wang}
\address{College of Mathematics and Statistics, Shenzhen University, Shenzhen 518060, Guangdong, China \& Academy of Mathematics and System Sciences, CAS, Beijing 100190, China}
\email{wangyf@math.ac.cn}

\thanks{S. L. FAN was partially supported by NSFC (grants No.  12331004 and No. 11971190) and Fok Ying-Tong Education Foundation, China (grant No.171001).  Y. F. WANG was partially supported by NSFC (grants No. 12231013) and NCAMS .}

\begin{abstract}
 Let $K$ be a finite extension of the field $\mathbb{Q}_p$ of $p$-adic numbers. A rational map $\phi\in K(z)$ of degree at least $2$ is  subhyperbolic if each critical point in the $\mathbb{C}_p$-Julia set of $\phi$ is eventually periodic.  We show that subhyperbolic maps in $K(z)$ exhibit expanding property
with respect to  some (singular) metric. As an application, under a mild assumption, we establish several statistical laws for such maps in $K(z)$ with compact $\mathbb{C}_p$-Julia sets. 
 \end{abstract}
\keywords{$p$-adic dynamics, Julia set, sub-hyperbolic maps, expanding metric}

\maketitle


\section{Introduction}\label{sec:intro}

The uniform expanding maps in smooth dynamical systems exhibit interesting ergodic and statistical properties. The hyperbolic maps in the dynamics of rational maps over number fields are  paradigms of uniform expanding maps, see  \cite{Benedetto01} and \cite[Section 19]{Milnor06}. Beyond hyperbolicity, in complex dynamics the subhyperbolic maps already appear in the literature and have been studied in a wide sense. Such maps induce natural orbifolds and  possess uniform expanding property with respect to some singular metric. This singular metric can be deducted from the metric in the universal covering space of the corresponding orbifolds, see \cite[Section 19]{Milnor06} for details. In the context of non-archimedean dynamics, we defined subhyperbolic  maps and explored certain dynamical properties in  our previous work \cite{Fan21}. In this present paper, we establish the non-archimedean counterpart of  expanding property for non-archimedean subhyperbolic maps and then explore the corresponding statistical laws. Due to the absence of tools (e.g. orbiford,  universal covering space) employed in the complex setting, we construct a desired (singular) metric relying on non-archimedean analysis.

Throughout this paper, we let $K$ be a finite extension of the field $\mathbb{Q}_p$ of $p$-adic numbers associated with the absolute value restricted from the natural and nontrivial non-archimedean absolute value $|\cdot|$ on $\mathbb{C}_p$. For any extension $L$ of $K$ and any rational map $\phi\in K(z)$ of degree at least $2$, we denote by $F_L(\phi)$ and $J_L(\phi)$ the Fatou set and Julia set, respectively, of $\phi$ in the projective space $\mathbb{P}^1_L$, defined by the terminology of equicontinuity. We assume that (by a change of coordinate if necessary) the point $\infty$ is contained in $F_{\mathbb{C}_p}(\phi)$. We remark here that  this assumption always holds if we regard $\phi\in K(z)$ as a rational map defined over some finite extension of $K$, due to the existence of non-repelling fixed points, see \cite[Proposition 4.2 and Theorem 5.14]{Benedetto19}. Also note that if $L$ is a finite extension of $K$, then $\mathbb{P}^1_L$ is compact, so is the intersection $J_{\mathbb{C}_p}(\phi)\cap\mathbb{P}^1_L=J_{\mathbb{C}_p}(\phi)\cap L$; moreover, the field $L$ carries  a normalized Haar measure $\nu_L$, see \cite{Haar33, Weil41}.

 Following our previous work \cite[Definition 1.2]{Fan21}, a rational map $\phi\in K(z)$ of degree at least $2$ is said to be  \emph{subhyperbolic} if each critical point in $J_{\mathbb{C}_p}(\phi)$ is eventually periodic. 
 To state the expanding property for subhyperbolic rational maps in $K(z)$,  we use the following definition of admissible functions. It is an analog of the admissibility in complex dynamics (c.f. \cite[Section V.3]{Carleson93})

\begin{definition}\label{def:admissible}
Let $\phi\in K(z)$ be a rational map of degree at least $2$ with $\infty\in F_{\mathbb{C}_p}(\phi)$, and let $L$ be a finite extension of $K$. For a continuous (extended) real-valued function $\rho:=\rho_L$ defined on some neighborhood  $V\subset\mathbb{C}_p$ of $J_{\mathbb{C}_p}(\phi)\cap L$, we say that $\rho$ is \emph{admissible} if the following hold:
\begin{enumerate}
\item there exists $C>0$ such that $\rho(V)\subset [C,+\infty]$;
\item the function $\rho$ (possibly) blows up at finitely many exceptional points $z_1,\cdots,z_\ell$ in $J_{\mathbb{C}_p}(\phi)\cap L$; and
\item  there exist $B>0$ and $0<\beta<1$ such that $\rho(z)\le\sum_{i=1}^\ell B/|z-z_j|^\beta$. 
\end{enumerate}
\end{definition}


As we will see in Proposition \ref{prop:exp}, due to the discreteness of $|L^\times|$, an admissible function $\rho_L$ induces a metric $\rho_L(z)d\nu_L(z)$ on $V\cap L$, called the \emph{admissible metric} induced by $\rho_L$. This will allow us to work on the diameters of sets in $L$, see Section \ref{sec:exp}.


\begin{definition}
Let $\phi\in K(z)$ be a rational map of degree at least $2$ with $\infty\in F_{\mathbb{C}_p}(\phi)$, and let $L$ be a finite extension of $K$. We say that $\phi$ is \emph{expanding} in $J_{\mathbb{C}_p}(\phi)\cap L$ with respect to an admissible metric $\rho_Ld\nu_L$ if there exists $\lambda>1$ such that for any $z\in J_{L}(\phi)$ whenever $z$ and $\phi(z)$ are not exceptional points of $\rho_L$,
\begin{equation}\label{equ:expansion}
\rho_L(\phi(z))|\phi'(z)|\ge\lambda\rho_L(z).
\end{equation}
\end{definition}

We now can state the equivalence between subhyperbolicity and expanding  property . 

\begin{theorem}\label{thm:equivalence}
Let $\phi\in K(z)$ be a rational map of degree at least $2$ such that the point $\infty$ is contained in $F_{\mathbb{C}_p}(\phi)$.
Then the following are equivalent:
\begin{enumerate}[{\rm (1)}]
\item The map $\phi$ is subhyperbolic, i.e. each critical point in $J_{\mathbb{C}_p}(\phi)$ is eventually periodic.
\item For any finite extension $L$ of $K$, the map $\phi$ is expanding in $J_{\mathbb{C}_p}(\phi)\cap L$ with respect to an admissible metric.
\end{enumerate}
\end{theorem}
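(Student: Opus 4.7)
The proof separates into the two implications; the construction of a singular admissible density realizing $(1)\Rightarrow(2)$ forms the substantive part. The overall philosophy mirrors the orbifold metric construction of complex dynamics (cf.~\cite[Section~19]{Milnor06}), but in the absence of any orbifold or universal-cover structure over $\bC_p$, the density must be built directly from non-archimedean Taylor estimates.

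The implication $(2)\Rightarrow(1)$ is short. Given a critical point $c\in J_{\bC_p}(\phi)$, its forward orbit is contained in the finite extension $L:=K(c)$, so by hypothesis there is an admissible density $\rho_L$ on $J_{\bC_p}(\phi)\cap L$ with finite exceptional set $E=\{z_1,\dots,z_\ell\}$ witnessing the expansion inequality. Since $\phi'(c)=0$, plugging $z=c$ into the inequality would give $0\ge\lambda\rho_L(c)\ge\lambda C>0$ whenever both $c$ and $\phi(c)$ are non-exceptional, a contradiction; hence one of $c,\phi(c)$ lies in $E$. An iteration strengthens this: whenever $\phi^n(c)\in E$, approaching $\phi^n(c)$ through non-exceptional points $z_k$ sends $\rho_L(z_k)\to\infty$ while $|\phi'|$ stays bounded, so the expansion inequality forces $\rho_L(\phi(z_k))\to\infty$ and hence $\phi^{n+1}(c)\in E$. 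Thus the tail of the orbit of $c$ lies in the finite set $E$ and is eventually periodic.

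For $(1)\Rightarrow(2)$, let $P\subset J_{\bC_p}(\phi)$ be the union of forward orbits of critical points in $J_{\bC_p}(\phi)$; by subhyperbolicity $P$ is finite. The key structural observation is that no critical point of $\phi$ in $J_{\bC_p}(\phi)$ can be periodic, since such a cycle would be superattracting and hence lie in the Fatou set. Consequently every postcritical cycle in $P$ consists of non-critical points. This lets me assign integer weights $\nu:P\to\bZ_{\ge 1}$ with $\nu(\phi(z))\ge\deg_z(\phi)\cdot\nu(z)$ for every $z\in P$, by setting $\nu\equiv 1$ on the cycles and propagating forward along each critical orbit via $\nu(\phi(z))=\deg_z(\phi)\nu(z)$, taking least common multiples where several critical orbits merge. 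For each finite extension $L$ of $K$, put $P_L:=P\cap L$ (which is forward-invariant under $\phi$ since $\phi\in K(z)\subset L(z)$) and define
$$
\rho_L(z)\;:=\;C_0\;+\;\sum_{\substack{z_i\in P_L\\ \nu(z_i)>1}}\frac{B_i}{|z-z_i|^{\,1-1/\nu(z_i)}}
$$
on a suitable neighborhood of $J_{\bC_p}(\phi)\cap L$, with positive constants $C_0,B_i$ to be tuned. Admissibility (Definition~\ref{def:admissible}) is automatic, with exponent $\beta:=\max_i(1-1/\nu(z_i))<1$.

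Expansion is then verified regionwise. Near an exceptional $z_i$, the Taylor expansion $\phi(z)-\phi(z_i)=a_k(z-z_i)^k+\cdots$ with $k:=\deg_{z_i}(\phi)$, combined with the ultrametric inequality, gives $|\phi(z)-\phi(z_i)|=|a_k|\,|z-z_i|^k$ and $|\phi'(z)|=|ka_k|\,|z-z_i|^{k-1}$ for $|z-z_i|$ small; the weight relation $\nu(\phi(z_i))=k\nu(z_i)$ is exactly what makes the $|z-z_i|$-exponents on the two sides of the expansion inequality cancel, reducing the local task to an inequality among constants that can be arranged by choosing the $B_i$ iteratively from the postcritical cycles backward along each orbit. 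Away from a small neighborhood of $P_L$ inside the compact set $J_{\bC_p}(\phi)\cap L$, the density $\rho_L$ is pinched between positive constants, so expansion reduces to a uniform lower bound $|\phi'|\ge\lambda>1$ --- the hyperbolicity of $\phi$ off its critical set --- which is available through the methods of~\cite{Fan21} after possibly passing to an iterate. \textbf{The main obstacle} is to coordinate these regimes with a single constant $\lambda>1$: the $B_i$ must be chosen consistently around the finite postcritical orbit, and in the transition between near and far from $P_L$ the singular and bulk estimates must be matched. The strict preperiodicity of critical orbits provided by subhyperbolicity is precisely what ensures the weight assignment closes up consistently around each postcritical cycle.
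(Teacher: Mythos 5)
Your $(2)\Rightarrow(1)$ direction follows the same idea as the paper's: finiteness of the exceptional set forces the critical orbits to be preperiodic. (One small caveat there: Definition~\ref{def:admissible} only says $\rho$ \emph{possibly} blows up at exceptional points, so your iteration step ``$\rho_L(z_k)\to\infty$'' is not automatic; but this is a minor point and the paper's own version of that direction is comparably terse.)

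The real difficulty is in $(1)\Rightarrow(2)$, and there your proposal has a genuine gap. You build an \emph{additive} density $\rho_L(z)=C_0+\sum_i B_i/|z-z_i|^{1-1/\nu(z_i)}$ and reduce the verification of the single-step inequality $\rho_L(\phi(z))|\phi'(z)|\ge\lambda\rho_L(z)$ away from $P_L$ to ``a uniform lower bound $|\phi'|\ge\lambda>1$ --- the hyperbolicity of $\phi$ off its critical set.'' Such a bound simply does not hold. Away from critical points on a compact piece of the Julia set you get only $|\phi'|\ge C_0>0$, and it is entirely possible for $|\phi'|$ to equal or dip below $1$ on Julia points far from the postcritical set (already for $p$-adic \emph{hyperbolic} maps, expansion of $|\phi'|$ occurs only after passing to an iterate, cf.\ Benedetto's result cited in the paper). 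With your additive $\rho_L$, if $z$ and $\phi(z)$ both lie far from all the $z_i$, then $\rho_L(z)\approx\rho_L(\phi(z))\approx C_0$, and the expansion inequality collapses to $|\phi'(z)|\ge\lambda>1$, which fails. You flag this as ``the main obstacle'' and invoke ``passing to an iterate,'' but passing to $\phi^N$ only gives an inequality for $\phi^N$, not for $\phi$, and the definition of expansion in the theorem is for a single application of $\phi$. The paper's resolution of exactly this obstruction is a construction you do not have: it first builds a local singular weight $\theta$ with $D_\theta\phi^n>\lambda$ after a uniform number $N$ of iterates (Propositions~\ref{prop:main} and \ref{prop:main2}), and then defines the admissible function as the geometric-mean average
$\rho_L(z)=\bigl(\prod_{j=0}^{N-1}\theta(\phi^j(z))\bigr)^{1/N}\cdot\prod_{j=0}^{N-2}|\phi'(\phi^j(z))|^{(N-1-j)/N}$,
for which the telescoping identity $\rho_L(\phi(z))|\phi'(z)|/\rho_L(z)=(D_\theta\phi^N(z))^{1/N}$ converts $N$-step expansion of $\theta$ into $1$-step expansion of $\rho_L$. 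This averaging trick is the crux of the implication and has no analog in your proposal.

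A secondary, but real, issue is your weight assignment. You set $\nu\equiv 1$ on the cycles and propagate \emph{forward} via $\nu(\phi(z))=\deg_z(\phi)\cdot\nu(z)$. These two rules are incompatible: when a critical orbit enters the cycle at a point $w=\phi(z)$ with $z$ critical, forward propagation demands $\nu(w)=\deg_z\phi\cdot\nu(z)\ge 2$, contradicting $\nu(w)=1$. The indices must be fixed on the cycle and propagated \emph{backward} along the critical orbit; this is exactly what the paper does (not with integers $\nu_i$ but with real exponents $\Xi(z_j)$ near $1$, via $1-\xi_j=\deg_{z_j}\phi\cdot(1-\xi_{j+1})$), and the backward propagation is why the indices stay in $(0,1)$ and why the local cancellation near critical points comes out with the correct sign.
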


Let us exploit the admissible metric in Theorem \ref{thm:equivalence} (2). For a subhyperbolic rational map $\phi\in K(z)$ as in Theorem \ref{thm:equivalence}, denote by $P(\phi)\subset\mathbb{C}_p$ the postcritical set of $\phi$. We assign each point $z_0\in P(\phi)\cap J_{\mathbb{C}_p}(\phi)$ an index $\Xi(z_0)\in(0,1)$ sufficiently close to $1$ such that $1-\Xi(z_0)=(\deg_{z_0}\phi)\cdot(1-\Xi(\phi(z_0)))$ if $\phi(z_0)\not=z_0$.  We also assign such a point $z_0$ a suitably chosen number $B(z_0)>0$. Then we set $\theta(z)=1$ if $z$ is not contained in a  small prescribed neighborhood of $P(\phi)\cap J_{\mathbb{C}_p}(\phi)$; and if  otherwise, set
$$\theta(z):=\frac{B(\iota(z))}{|z-\iota(z)|^{\Xi(\iota(z))}},$$
where $\iota(z)\in P(\phi)$ is the point such that the distance $\mathrm{dist}(z,P(\phi)\cap J_{\mathbb{C}_p}(\phi))$ under the metric induced by the absolute value $|\cdot|$ on $\mathbb{C}_p$ is equal to $|z-\iota(z)|$. As we will see in Section \ref{sec:index}, the factor $B(\iota(z))$ guarantees the expanding property of $\phi$ near $P(\phi)\cap J_{\mathbb{C}_p}(\phi)$ with respect to $\theta$.
Finally, due to the compactness of $J_{\mathbb{C}_p}(\phi)\cap L$, we can find a large integer $N>0$ and define a desired admissible function by
$$\rho_L(z)=\left(\prod_{j=0}^{N-1}\theta(\phi^j(z))\right)^{1/N}\cdot\prod_{j=0}^{N-2}|\phi'(\phi^j(z))|^{(N-1-j)/N}.$$
The above function $\rho_L$ induces a desired admissible metric, see Section \ref{sec:expansion}. Moreover, the exceptional points  of $\rho_L$ are the points in $P(\phi)\cap J_{\mathbb{C}_p}(\phi)\cap L$. It is worth mentioning that if $\phi$ is hyperbolic, that is, the Julia set $J_{\mathbb{C}_p}(\phi)$ contains no critical points, then the corresponding admissible function has no exceptional points, also see \cite[Section 3]{Benedetto01}.


Although the constructions of admissible metrics for sub-hyperbolic maps in both the complex setting and our setting are focus on the postcritical sets, there are differences in these two settings. In the complex setting, as aforementioned, the metric relates to orbifold; while in our setting, we directly work on  a neighborhood of the Julia postcritical set inspired by our previous work \cite[Theorem 1.4]{Fan21} on the description the dynamics of such maps via symbolic dynamics. We obtain the expanding property  by computing the derivatives with the non-archimedean property.



\smallskip

As an application of Theorem \ref{thm:equivalence}, we will establish some statistical laws for subhyperbolic maps in $K(z)$,
under the assumption that the $\mathbb{C}_p$-Julia set is compact and contains a dense subset consisting of algebraic elements. This assumption guarantees that the $\mathbb{C}_p$-Julia set carries  a so-called exponentially contracting metric, see Definition \ref{def:con} and Proposition \ref{prop:con}.


To formulate the result, we will use the following notation. Let $\psi\in\mathbb{C}_p(z)$ be a rational map of degree at least $2$ having non-empty  Julia set $J_{\mathbb{C}_p}(\psi)$. For any continuous function $f: J_{\mathbb{C}_p}(\psi)\to\mathbb{R}$ and for any $\psi$-invariant probability measure $\mu$ on $J_{\mathbb{C}_p}(\psi)$,
we set
$$S_nf(z):=\sum_{k=0}^{n-1}f(\psi^k(z)),$$
and define (if exists)
$$\sigma_{\mu}(f)^2:=\lim_{n\to\infty}\frac{1}{n}\int_{J_{\mathbb{C}_p}(\psi)}\left(S_nf(z)-n\int_{J(_{\mathbb{C}_p}\psi)}f d\mu\right)^2d\mu.$$
Moreover, in the case that $J_{\mathbb{C}_p}(\psi)$ is compact and $f$ is H\"older continuous, we can define the topological pressure $\mathcal{P}_{top}(f)$ and consider corresponding equilibrium state $\mu_f$, see Section \ref{sec:proof2}.


\begin{theorem}\label{thm:CLT}
Let $\phi\in K(z)$ be a subhyperbolic rational map of degree at least $2$.  Assume that $J:=J_{\mathbb{C}_p}(\phi)$ is compact and $J_{\overline{\mathbb{Q}}_p}(\phi)$ is dense in $J$. Let $\tilde\rho_\infty$ be an exponentially contracting metric on $J$ compatible with the topology. Consider a H\"older continuous function $f: (J, \tilde\rho_\infty)\to\mathbb{R}$. Then there exists a unique equilibrium state $\mu_f$ for $f$ on $J$. Moreover,  for any H\"older continuous function $g: (J, \tilde\rho_\infty)\to\mathbb{R}$, the limit $\sigma:=\sigma_{\mu_f}(g)\in[0,\infty)$ exists and the following statistical laws  hold.

\begin{enumerate}[{\rm (1)}]
\item {\rm (Central Limit Theorem)}  If $\sigma>0$, then for any $a<b$, as $n\to\infty$,
$$\mu_f\left(\left\{z\in J:\frac{S_ng(z)-n\int_{J}g d\mu_f}{\sqrt{n}}\in[a,b]\right\}\right)\rightarrow\frac{1}{\sqrt{2\pi\sigma^2}}\int_a^b e^{-t^2/2\sigma^2}dt;$$
 and if otherwise $\sigma=0$, one has convergence in probability to the Dirac mass at $0$.

 \item {\rm (Law of Iterated Logarithm)} For $\mu_f$-a.e. $z\in J$,
 $$\limsup_{n\to\infty}\frac{S_ng(z)-n\int_{J}g d\mu_f}{\sqrt{n\log n\log n}}=\sqrt{2\sigma^2}.$$

 \item {\rm (Exponential Decay of Correlations)} There exist constants $\beta>0$ and $C\ge 0$, independent of $g$, such that for any $\mu_f$-integrable function $\chi: J\to\mathbb{R}$ and for any $n\ge 0$,
 $$\left|\int_Jg\cdot(\chi\circ\phi^n)d\mu_f-\int_Jg d\mu_f\cdot\int_J\chi d\mu_f\right|\le Ce^{-n\beta}||\underline{\chi}||_1\cdot||\underline  g||_\alpha,$$
 where,  $\underline{\chi}=\chi-\int_J\chi d\mu_f$, $\underline{g}=g-\int_Jg d\mu_f$, and $\alpha$ is the H\"older exponent of $g$.

 \item  {\rm (Large Deviation Principle)} For every $t\in\mathbb{R}$,
 \begin{multline*}
 \lim_{n\to\infty}\frac{1}{n}\log\mu_f\left(\left\{z\in J:\mathrm{sgn}(t)S_ng(z)\ge\mathrm{sgn}(t)n\int_J gd\mu_{f+tg}\right\}\right)\\
  =-t\int_J gd\mu_{f+tg}+P_{top}(f+tg)-P_{top}(f).
 \end{multline*}

  \item The limit $\sigma=0$ if and only if there exists a continuous function $u: (J, \tilde\rho_\infty)\to\mathbb{R}$ such that
  $$g-\int_Jgd\mu_f=u\circ\phi-u.$$

  \item Let $f_1: (J, \tilde\rho_\infty)\to\mathbb{R}$ be another  H\"older continuous function. Then  $\mu_f=\mu_{f_1}$ if and only if there exist $A\in\mathbb{R}$ and a continuous function $u: (J, \tilde\rho_\infty)\to\mathbb{R}$ such that
   $$f-f_1=u\circ\phi-u+A.$$
  \end{enumerate}
\end{theorem}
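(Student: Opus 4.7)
The plan is to place the triple $(\phi, J, \tilde\rho_\infty)$ into the classical framework of thermodynamic formalism for open distance-expanding maps on compact metric spaces, and then invoke the corresponding statistical theory (as developed by Ruelle, Bowen, Przytycki-Urbanski, and others).

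The first task is to verify that $\phi|_J$ is an open, uniformly expanding map with respect to $\tilde\rho_\infty$ and that $\phi|_J$ is topologically mixing (after, if necessary, passing to a spectral decomposition of $J$). The expanding inequality provided by Theorem \ref{thm:equivalence} is with respect to the \emph{singular} admissible metric $\rho_L\,d\nu_L$, which blows up near the postcritical set $P(\phi)\cap J$. The role of $\tilde\rho_\infty$, to be constructed in Definition \ref{def:con} and Proposition \ref{prop:con}, is precisely to absorb those singularities into a genuine metric compatible with the topology of $J$ and with respect to which backward orbits contract at an exponential rate, i.e.\ $\phi|_J$ is distance-expanding. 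Topological mixing (or the appropriate spectral decomposition) would be obtained by combining the density of $J_{\overline{\mathbb{Q}}_p}(\phi)$ in $J$ with the symbolic/combinatorial description of the Julia set dynamics from our earlier work \cite{Fan21}.

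Once $(\phi, J, \tilde\rho_\infty)$ is shown to be an open topologically mixing distance-expanding system, I would introduce the Ruelle transfer operator
$$\mathcal{L}_f h(z) \;=\; \sum_{\phi(w)=z} e^{f(w)} h(w)$$
acting on the Banach space of $\alpha$-H\"older continuous functions on $(J,\tilde\rho_\infty)$. The Ruelle-Perron-Frobenius theorem in this setting yields a spectral gap for $\mathcal{L}_f$: a simple dominant eigenvalue $e^{\mathcal{P}_{top}(f)}$ with a positive H\"older eigenfunction $h_f$ and a conformal eigenmeasure $\nu_f$ for $\mathcal{L}_f^\ast$, the rest of the spectrum lying strictly inside a smaller disk. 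The unique equilibrium state is then $\mu_f = h_f\nu_f$, giving the first assertion of the theorem.

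From the spectral gap, each of (1)--(6) follows by now-standard arguments: (3) exponential decay of correlations is immediate from $\|\mathcal{L}_f^n\chi - h_f\!\int\chi\,d\nu_f\|_\alpha \le Ce^{-\beta n}\|\chi\|_\alpha$; (1) the CLT follows via the Nagaev-Guivarc'h perturbation method applied to the twisted operators $\mathcal{L}_{f}(e^{itg}\cdot)$; (4) the large deviation principle follows from the real-analyticity of $t\mapsto\mathcal{P}_{top}(f+tg)$ combined with the G\"artner-Ellis theorem; (2) the LIL follows from an almost-sure invariance principle, itself a consequence of the spectral gap; and (5)--(6) are the classical Livsic-type coboundary characterizations of zero asymptotic variance and of coincidence of equilibrium states. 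The main obstacle I expect is the first step: turning the non-archimedean expansion of Theorem \ref{thm:equivalence} (stated through the singular admissible metric on each finite extension $L$) into a genuine compact metric distance-expanding system on all of $J$, and simultaneously verifying the mixing hypothesis. This will need to combine the construction of $\tilde\rho_\infty$, the density of $J_{\overline{\mathbb{Q}}_p}(\phi)$, and the combinatorial description of $J$ from \cite{Fan21}; everything downstream is then an application of the classical theory.
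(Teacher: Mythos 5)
Your proposal takes a genuinely different route from the paper, but the route has a real gap precisely at the step you flag as the ``main obstacle,'' and the gap is more serious than you suggest.

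You propose to verify directly that $\phi|_J$ is an \emph{open}, uniformly distance-expanding, topologically mixing map with respect to $\tilde\rho_\infty$, and then feed the system into the classical Ruelle--Perron--Frobenius machinery on $J$ itself. The trouble is that in the subhyperbolic-but-not-hyperbolic case there are genuine critical points of $\phi$ inside $J$, and near such a point $c$ the restriction $\phi|_J$ is locally $\deg_c\phi$-to-one. The map $\phi|_J$ is therefore not open in the sense required by the open distance-expanding framework, no matter which compatible metric you put on $J$. Moreover, the exponentially contracting property of $\tilde\rho_\infty$ (Definition~\ref{def:con}) only controls diameters of preimage covers $\mathcal{U}_n$ exponentially; it does not give the local two-point expansion estimate $\tilde\rho_\infty(\phi(x),\phi(y))\ge\lambda\,\tilde\rho_\infty(x,y)$ for nearby $x,y$, and in fact no such estimate holds across a Julia critical point since two nearby points can collapse to the same image. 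So the Banach-space spectral-gap argument for $\mathcal{L}_f$ on H\"older functions over $(J,\tilde\rho_\infty)$ cannot be launched directly.

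The paper sidesteps this exactly as \cite{Das21} does: rather than trying to make $\phi|_J$ itself fit the classical framework, it constructs (Proposition~\ref{prop:coding}) a H\"older-continuous \emph{surjective semiconjugacy} $h:\Sigma\to J$ from the full one-sided shift on $\deg\phi$ symbols, using a geometric coding tree together with the exponentially contracting metric. It then proves (Lemma~\ref{lem:same}) that $h$ transports entropy, invariant measures, and equilibrium states without loss, so that the thermodynamic formalism and statistical laws for $(\Sigma,\sigma)$ --- where RPF theory is unproblematic --- push forward to $(J,\phi)$. The remark following Proposition~\ref{prop:coding} is explicit that $h$ cannot in general be upgraded to a conjugacy, which is exactly the signal that the direct approach you sketch will not go through. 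If you want to salvage your plan you would need to replace ``open distance-expanding'' by the more flexible ``coarse expanding'' framework of \cite{Das21}, which tolerates the branching; but at that point you are essentially reproducing the paper's coding argument in a different language, so the semiconjugacy-and-transfer route is the efficient one.

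A secondary remark: the density of $J_{\overline{\mathbb{Q}}_p}(\phi)$ in $J$ is used in the paper to produce the exponentially contracting metric via the admissible metrics over a tower of finite extensions $L_m$ and Frink's lemma (Proposition~\ref{prop:con}), not to prove mixing. Mixing of $(J,\phi)$ is never verified directly; it comes for free from the mixing of the full shift together with the surjective semiconjugacy.
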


Let us remark on the assumptions in Theorem \ref{thm:CLT}.  As aforementioned, these assumptions guarantee the existence of a  exponentially contracting metric on $J$. Besides that, the compactness assumption gives desired limits in $J_{\mathbb{C}_p}(\phi)$  for convergence sequences in $J_{\mathbb{C}_p}(\phi)$, see Proposition \ref{prop:coding}. This assumption holds if the Berkovich Julia set of $\phi$ equals to $J_{\mathbb{C}_p}(\phi)$; for such an example, we refer \cite[Theorem A]{Kiwi22} for the tame polynomials in the closure of the tame shift locus.  The density assumption of $J_{\overline{\mathbb{Q}}_p}(\phi)$ in $J_{\mathbb{C}_p}(\phi)$ is very mild in the sense that it can be deduced from the Repelling Density Conjecture \cite[Conjecture 4.3]{Hsia00}. Indeed, the Repelling Density Conjecture claims that the repelling periodic points are dense in the $\mathbb{C}_p$-Julia set, which immediately implies that  $J_{\overline{\mathbb{Q}}_p}(\phi)$ is dense in $J_{\mathbb{C}_p}(\phi)$ since all repelling periodic points are contained in $J_{\overline{\mathbb{Q}}_p}(\phi)$. Although this conjecture is open in general, it has been confirmed in several cases, e.g., certain polynomials (\cite[Th\`eor\'eme]{Bezivin05} and \cite[Corollary C]{Trucco14}), quadratic rational maps (\cite[Proposition 1.2]{Doyle16}), rational maps with positive Lyapunov exponent (\cite[Theorem 2]{Okuyama12}), and rational maps having at least one repelling periodic point (\cite[Th\`eor\'eme 3]{Bezivin01}). In particular, if $\phi$ is sub-hyperbolic but not hyperbolic, then $\phi$ has a repelling periodic point and hence satisfies this density assumption.


Theorem \ref{thm:CLT} can be regarded as a $p$-adic counterpart of the recent results for weakly coarse expanding dynamical system in \cite[Theorem 1.1]{Das21}. We should warn the reader that in the setting of \cite{Das21}, the underlying topological spaces are locally compact, locally connected and path connected; while in our setting, the underlaying space $\mathbb{C}_p$ is not locally compact and totally disconnected.

To end this introduction, we mention that for any rational map $\psi\in\mathbb{C}_p(z)$ of degree at least $2$, there exists a unique invariant probability measure $\nu_\psi$ such that $\psi^\ast\nu_\psi=\deg\psi\cdot\nu_\psi$ and $\mathrm{supp}(\nu_\psi)$ coincide with the Berkovich Julia set of $\phi$, see \cite[Section 10.1]{Baker10}, \cite[Theorem 0.1]{Favre04} and \cite[Th\'eor\`eme A]{Favre10}; then for this measure $\nu_\psi$, a central limit theorem has been established  in \cite[Proposition 3.5]{Favre10}. 

\smallskip
The paper is organized as follows. Section \ref{sec:background} states preliminaries used in the paper. It contains the Fatou and Julia sets defined over distinct fields and basic mapping properties related to derivatives. Section \ref{sec:expansion} is devote to the proof of Theorem \ref{thm:equivalence}. It provides a way to assign each Julia postcritical point  particular numbers corresponding to the aforementioned $\Xi$ and $B$ for a given subhyperbolic rational map.  Section \ref{sec:laws} proves Theorem \ref{thm:CLT}. It establishes a coding of the related $\mathbb{C}_p$-Julia dynamics semiconjugately to symbolic dynamics of finitely many symbols, which may have independent interest.


\section{Background on dynamics}\label{sec:background}

In this section, we provide preliminaries for latter use. We briefly discuss the Fatou and Julia sets in Section \ref{sec:Fatou} and  state some results on injectivity and derivative in Section \ref{sec:inj}.

\subsection{Fatou and Julia sets}\label{sec:Fatou}
Since our argument will take place on several relevant Fatou and Julia sets, we state these sets in this subsection.


For any field extension $L$ of $K$, a rational map $\phi\in K(z)$ of degree at least $2$ induces a dynamical system on the projective space $\mathbb{P}^1_L$; in particular, the map $\phi$ can acts on $\mathbb{P}^1_{\mathbb{C}_p}$.  The $L$-Fatou set $F_L(\phi)$ of $\phi$ is the set of points in $\mathbb{P}^1_L$ having a neighborhood in $\mathbb{P}^1_L$  on which $\{\phi^n\}_{n\ge 1}$ is equicontinuous with respect to the spherical metric induced by the absolute value $|\cdot|$; and the $L$-Julia set $J_L(\phi)$ of $\phi$ is the complement of $F_L(\phi)$ in $\mathbb{P}^1_L$. It follows immediately that if $L'$ is an extension of $L$, then $J_{L}(\phi)\subseteq J_{L'}(\phi)$. 
Since $\mathbb{C}_p$ is totally disconnected, following \cite[Definition 1.3]{Benedetto00}, we say a largest disk contained in $F_{\mathbb{C}_p}(\phi)$ is a \emph{D-component} of $F_{\mathbb{C}_p}(\phi)$.


Recall that a rational map $\phi\in K(z)$ of degree at least $2$ is subhyperbolic if each critical point in $J_{\mathbb{C}_p}(\phi)$ is eventually periodic.
The following result gives the relation among  distinct Fatou sets (w.r.t. Julia sets) for subhyperbolic rational maps. 
\begin{lemma}[{\cite[Theorem 1.3]{Fan21}}]\label{lem:Fatou1}
Let $\phi\in K(z)$ be a subhyperbolic rational map of degree at least $2$. Then 
for any finite extension $L$ of $K$,
$$F_{\mathbb{C}_p}(\phi)\cap\mathbb{P}^1_L=F_L(\phi)\ \ \text{and}\ \ J_{\mathbb{C}_p}(\phi)\cap\mathbb{P}^1_L=J_L(\phi).$$
\end{lemma}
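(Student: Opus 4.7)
My plan is to prove the two set equalities by treating the two inclusions separately. The easy direction $F_{\mathbb{C}_p}(\phi)\cap\mathbb{P}^1_L\subseteq F_L(\phi)$ is immediate from the definitions: any $\mathbb{C}_p$-neighborhood of $z_0\in\mathbb{P}^1_L$ on which $\{\phi^n\}$ is equicontinuous with respect to the spherical metric restricts to an $L$-neighborhood with the same property, since the spherical metric on $\mathbb{P}^1_L$ is the restriction of that on $\mathbb{P}^1_{\mathbb{C}_p}$. This yields the inclusion $J_L(\phi)\subseteq J_{\mathbb{C}_p}(\phi)\cap\mathbb{P}^1_L$ and does not use subhyperbolicity.

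For the reverse inclusion $F_L(\phi)\subseteq F_{\mathbb{C}_p}(\phi)\cap\mathbb{P}^1_L$, I would proceed as follows. Given $z_0\in F_L(\phi)$, fix an $L$-disk $D_L(z_0,r)=\{z\in L:|z-z_0|\le r\}$ on which $\{\phi^n\}$ is equicontinuous; the goal is to show that, possibly after shrinking $r$, the $\mathbb{C}_p$-disk $D(z_0,r)=\{z\in\mathbb{C}_p:|z-z_0|\le r\}$ lies in $F_{\mathbb{C}_p}(\phi)$. Subhyperbolicity supplies two structural facts I would rely on: first, the postcritical set $P(\phi)\cap J_{\mathbb{C}_p}(\phi)$ is a finite set of algebraic (hence $\overline{K}$-valued) points; and second, each D-component of $F_{\mathbb{C}_p}(\phi)$ is eventually mapped into an attracting basin on which $\phi$ is strictly contracting on a suitable $\mathbb{C}_p$-disk. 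The bridge from $L$ to $\mathbb{C}_p$ is non-archimedean analysis: each iterate $\phi^n\in K(z)$ is $\mathbb{C}_p$-analytic on any disk avoiding its poles, and for such an analytic function the Gauss norm of its power series centered at $z_0$ of radius $r$ coincides with the supremum of $|\phi^n(z)-\phi^n(z_0)|$ taken either over $D_L(z_0,r)$ or over $D(z_0,r)$. Thus an equicontinuity estimate over $L$-points extends to the same estimate over $\mathbb{C}_p$-points on a pole-free disk of the same radius.

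The main obstacle, and what truly requires subhyperbolicity, is guaranteeing that the extended $\mathbb{C}_p$-disk actually lies in $F_{\mathbb{C}_p}(\phi)$: that its forward orbit does not escape the appropriate D-component and accumulate on $J_{\mathbb{C}_p}(\phi)$ at transcendental points that are invisible over $L$. A priori one could imagine a rational map for which the postcritical set accumulates on an $L$-point $z_0$ only through transcendental $\mathbb{C}_p$-points, in which case $\{\phi^n\}$ might be equicontinuous on $D_L(z_0,r)$ yet fail to be so on $D(z_0,r)$. Subhyperbolicity rules this out precisely because $P(\phi)\cap J_{\mathbb{C}_p}(\phi)$ is finite and algebraic: the separations between its points, and the radii of the attracting disks surrounding the critical orbit, are determined by finitely many algebraic data. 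Once $r$ is chosen smaller than the minimum such separation, the $\mathbb{C}_p$-disk $D(z_0,r)$ is forced to lie in a single D-component, and $\mathbb{C}_p$-equicontinuity then follows by combining the analytic transfer above with the contraction on this D-component. Putting the two directions together yields $F_{\mathbb{C}_p}(\phi)\cap\mathbb{P}^1_L=F_L(\phi)$, and taking complements in $\mathbb{P}^1_L$ gives the corresponding equality of Julia sets.
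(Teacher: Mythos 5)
The paper does not actually prove this lemma: it is quoted verbatim as \cite[Theorem 1.3]{Fan21}, i.e.\ it is imported from the authors' earlier work, so there is no internal proof here for your proposal to be checked against. I will therefore assess your sketch on its own terms.

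Your easy inclusion is fine, and the two structural inputs you identify from subhyperbolicity (finiteness and algebraicity of $P(\phi)\cap J_{\mathbb{C}_p}(\phi)$, and the eventual absorption of D-components into periodic ones) are correct and relevant. But there are two genuine gaps in the hard direction. First, the ``analytic transfer'' step is not as innocent as stated: equicontinuity of $\{\phi^n\}$ on $D_L(z_0,r)$ with respect to the spherical metric does not prevent $\phi^n$ from having a pole in $D(z_0,r)\setminus L$ for some $n$ (poles of $\phi^n$ are algebraic but need not lie in $L$), and once a pole is present the power-series/Gauss-norm comparison you invoke no longer applies to the full $\mathbb{C}_p$-disk. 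Second, and more seriously, the final assertion that ``once $r$ is smaller than the minimum separation [of $P(\phi)\cap J_{\mathbb{C}_p}(\phi)$], the $\mathbb{C}_p$-disk $D(z_0,r)$ is forced to lie in a single D-component'' does not follow from what precedes it: $J_{\mathbb{C}_p}(\phi)$ is typically vastly larger than the finite set $P(\phi)\cap J_{\mathbb{C}_p}(\phi)$, and nothing in your argument rules out $J_{\mathbb{C}_p}(\phi)$ accumulating on $z_0$ through transcendental points even when the postcritical Julia points are far from $z_0$. This is exactly the scenario you flagged as the ``main obstacle,'' and the separation bound you propose does not resolve it. To close the argument you would need a genuinely dynamical input — for example, control of the geometry of the disk components of $F_{\mathbb{C}_p}(\phi)$ near $L$-points coming from the expansion/contraction dichotomy and the Benedetto--Rivera-Letelier classification (noting also that indifferent periodic components, not only attracting ones, can occur, so the ``strictly contracting'' mechanism you rely on does not cover all cases). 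As written, the proposal describes the correct landscape but does not supply the decisive step.
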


By a result of Benedetto \cite[Theorem 1.2]{Benedetto00} and a result of Rivera-Letelier \cite[Th\'eor\`eme de classification, p. 152]{Rivera03}, if $\phi\in K(z)$ is subhyperbolic, then each D-component of $F(\phi)$ eventually maps into an attracting periodic D-component or an indifferent periodic $D$-component. Also by \cite[Theorem 1.3]{Benedetto00}, there are only finitely many periodic D-components of $F_{\mathbb{C}_p}(\phi)$ intersecting $\mathbb{P}^1_L$ for any given finite extension $L$ of $K$. We summarize these as following.


\begin{lemma}\label{lem:Fatou}
Let $\phi\in K(z)$ be a subhyperbolic rational map of degree at least $2$ and let $L$ be a finite extension of $K$. Then the following hold.
\begin{enumerate}[{\rm (1)}]
\item For any $z\in F_L(\phi)$, there exists $\epsilon:=\epsilon(z)>0$ such that the spherical distance between $\overline{\{\phi^n(z)\}_{n\ge 1}}$ and $J_L(\phi)$ is at least $\epsilon$.
\item There exist finitely many $D$-components of $F_{\mathbb{C}_p}(\phi)$ such that for any $z\in F_L(\phi)$, the forward orbit of $z$ eventually lies in these components.
\end{enumerate}
\end{lemma}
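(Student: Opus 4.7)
The strategy is to combine the two facts stated in the paragraph just above the lemma (the Rivera--Letelier classification and Benedetto's finiteness theorem) with Lemma \ref{lem:Fatou1}, namely that $F_L(\phi)=F_{\mathbb{C}_p}(\phi)\cap\mathbb{P}^1_L$. I expect the proof to amount to quoting these inputs and tracking the orbit carefully.

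For (2), I would first observe that, since $\phi\in K(z)\subset L(z)$, the forward orbit of any $z\in F_L(\phi)$ remains in $\mathbb{P}^1_L$. Consequently each D-component $U_n$ of $F_{\mathbb{C}_p}(\phi)$ that contains $\phi^n(z)$ meets $\mathbb{P}^1_L$. The Rivera--Letelier classification then ensures that $U_n$ is periodic for all $n$ sufficiently large, and Benedetto's theorem \cite[Theorem 1.3]{Benedetto00} says that the collection of periodic D-components meeting $\mathbb{P}^1_L$ is finite. Taking the (finite) union of these periodic D-components over all $z\in F_L(\phi)$ yields the list required in (2).

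For (1), the plan is to show that within each periodic D-component $V$ of attracting or indifferent type visited by the orbit, the orbit of $z$ stays strictly inside $V$ at positive distance from $\partial V$. In the attracting case the return map $\phi^p$ contracts toward the attracting periodic point $z_0\in V$, so the orbit tail lies in an arbitrarily small closed subdisk of $V$ about $z_0$. In the indifferent case, the Rivera--Letelier local model provides a coordinate on $V$ in which $\phi^p$ acts as an isometry, so the orbit of $z$ stays on a single sphere of constant radius about the periodic point and remains inside $V$. Either way, the closure of the orbit is contained in a proper closed subdisk of $V$, hence has positive spherical distance from $\partial V$ and so from $J_{\mathbb{C}_p}(\phi)$; since $J_L(\phi)\subseteq J_{\mathbb{C}_p}(\phi)$, the same lower bound serves as the required $\epsilon(z)$.

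The only mildly delicate point is the indifferent case of (1), where one needs the explicit Rivera--Letelier local model to rule out accumulation on $\partial V$. Beyond that, the argument is purely finiteness and compactness bookkeeping already packaged in the cited theorems, so I anticipate a short proof.
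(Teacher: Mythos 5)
The paper gives no detailed proof of this lemma --- it is stated explicitly as a ``summary'' of the two cited results --- so the comparison is really against the intended filling-in. Your reading of part (2) is correct and is the intended one: since $\phi\in K(z)\subset L(z)$, the orbit stays in $\mathbb{P}^1_L$, so every D-component it visits meets $\mathbb{P}^1_L$; eventual periodicity of the D-components follows from Benedetto's Theorem 1.2 together with Rivera-Letelier's classification, and finiteness from Benedetto's Theorem 1.3.

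For (1) your conclusion is correct, but two intermediate claims in the indifferent case do not hold in general, and they are also unnecessary. Rivera-Letelier's classification does not guarantee that an indifferent D-component $V$ contains a $\mathbb{C}_p$-periodic point of the return map: the return map can be analytically conjugate to a translation $z\mapsto z+c$ with no fixed point in $V$. Moreover, even granting an isometric return map, the orbit closure need not be a \emph{proper} closed subdisk of $V$ (for $z\mapsto z+c$ with $|c|$ equal to the radius of $V$, the orbit closure of any point is all of $V$). A cleaner observation bypasses both issues: a D-component is a disk in $\mathbb{P}^1_{\mathbb{C}_p}$, hence clopen, and by the strong triangle inequality \emph{every} point of such a disk $V$ is at a fixed positive spherical distance --- essentially the radius of $V$ --- from $\mathbb{P}^1_{\mathbb{C}_p}\setminus V$, and a fortiori from $J_{\mathbb{C}_p}(\phi)\supseteq J_L(\phi)$. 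Since by (2), together with a finite transient, the orbit of $z$ is confined to finitely many D-components $V_1,\dots,V_m$, one may take $\epsilon(z)=\min_i\mathrm{rad}(V_i)>0$. In particular no attracting/indifferent dichotomy or local model is needed for (1) once (2) is in hand; the ultrametric isolation of disks does all the work.
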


\subsection{Injectivity and derivative}\label{sec:inj}
 Let $\psi\in\mathbb{C}_p(z)$ be a rational map of degree $d\ge 2$. Denote by $\mathrm{Crit}(\psi)$ the set of critical points in $\mathbb{P}^1_{\mathbb{C}_p}$. Then $\mathrm{Crit}(\psi)$ contains $2d-2$ points, counted with multiplicity. We define the postcritical set for $\psi$ as
$$P(\psi):=\bigcup_{n\ge 1} \psi^n(\mathrm{Crit}(\psi)),$$
here we do not take the closure in our propose; and we also set
$$P_0(\psi):=\mathrm{Crit}(\psi)\cup P(\psi).$$
It follows immediately that
\begin{equation}\label{equ:inv}
\psi(P_0(\psi))=P(\psi)\ \ \text{and}\ \ \psi(P(\psi))\subset P(\psi).
\end{equation}

We now state a criterion for the injectivity of $\phi$ on a disk in $\mathbb{C}_p$, which is a direct translation of results in the corresponding Berkovich dynamics (see \cite[Theorems D and F]{Faber13}). Set
\begin{equation}\label{equ:delta}
\delta_{\max}:=\frac{1}{p-1},
\end{equation}
 and write $\mathrm{diam}(D)$ for the diameter of a disk $D\subset\mathbb{C}_p$.
\begin{lemma}\label{lem:inj}
 Let $\psi\in\mathbb{C}_p(z)$ be a nonconstant rational map, and consider a disk $D_1\subset\mathbb{C}_p$. Let $D_2\subset\mathbb{C}_p$  be a disk containing $D_1$ and having diameter at least $p^{\delta_{\max}}\mathrm{diam}(D_1)$. If $D_2\cap\mathrm{Crit}(\psi)=\emptyset$, then $\psi$ is injective on $D_1$.
\end{lemma}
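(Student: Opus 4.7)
The plan is to reformulate both the geometric hypothesis and the desired conclusion as Newton-polygon inequalities on the Taylor coefficients of $\psi$, and to observe that the factor $p^{\delta_{\max}} = p^{1/(p-1)}$ is exactly what converts one into the other in the presence of the non-Archimedean absolute value $|n|$. After translating in the source I fix a common centre $0$ of $D_1 \subset D_2$ and write $r_i := \mathrm{diam}(D_i)$, so that $r_2 \ge p^{1/(p-1)} r_1$. Post-composing $\psi$ with a Möbius transformation of $\mathbb{P}^1_{\mathbb{C}_p}$ (which preserves both $\mathrm{Crit}(\psi)$ and the question of injectivity on $D_1$) I may assume $\psi$ has no pole on $D_2$, so that $\psi(z) = c_0 + c_1 z + c_2 z^2 + \cdots$ converges on $D_2$ with $c_1 = \psi'(0) \ne 0$.

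The hypothesis that $\psi'(z) = \sum_{n\ge 1} n c_n z^{n-1}$ has no zero on $D_2$ gives, by the non-Archimedean Weierstrass preparation theorem (equivalently, by the Newton polygon of $\psi'$), the dominance of the constant term:
\[
|c_1| \;\ge\; |n c_n|\, r_2^{n-1} \qquad (n \ge 2).
\]
On the other hand, the standard factorisation
\[
\psi(z) - \psi(w) = (z-w)\Bigl(c_1 + \sum_{n \ge 2} c_n\bigl(z^{n-1} + z^{n-2}w + \cdots + w^{n-1}\bigr)\Bigr), \qquad z,w\in D_1,
\]
together with the ultrametric bound $|z^{n-1} + \cdots + w^{n-1}| \le r_1^{n-1}$ shows that $\psi$ is injective on $D_1$ as soon as $|c_1| \ge |c_n|\, r_1^{n-1}$ for all $n \ge 2$. (The strictness of each inequality mirrors in the usual way whether the enclosing disk is open or closed, and the two sides agree consistently.)

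The bridge between the two inequalities is the elementary $p$-adic estimate $|n|\, p^{(n-1)/(p-1)} \ge 1$ for every $n \ge 1$, which reduces to $p^{v_p(n)} \le n$, i.e.\ $(p-1)\,v_p(n) \le n-1$. Combined with $r_2 \ge p^{1/(p-1)} r_1$, it yields
\[
|c_n|\, r_1^{n-1} \;\le\; |n c_n|\, r_2^{n-1} \;\le\; |c_1|,
\]
which is exactly the required injectivity estimate.

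The main obstacle is the preliminary Möbius reduction used to clear possible simple poles of $\psi$ from $D_2$ so that the Taylor series exists there; once this is done, the rest is a single line of $p$-adic arithmetic. The same statement can be obtained by quoting \cite[Theorems D and F]{Faber13} directly, after identifying $D_1$ and $D_2$ with their Berkovich Gauss points: these lie at hyperbolic distance $\log_p(r_2/r_1) \ge \delta_{\max}$, which is precisely Faber's threshold for guaranteeing that the local degree at the upper Gauss point equals one, equivalent to injectivity on $D_1$.
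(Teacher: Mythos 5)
The paper does not actually prove this lemma — it is stated as ``a direct translation'' of Theorems D and F of Faber's work on the Berkovich ramification locus, and that citation constitutes the paper's entire justification. Your argument, by contrast, is a self-contained elementary proof by power-series manipulation: convert ``no zero of $\psi'$ on $D_2$'' into the Newton-polygon bound $|c_1|\ge|nc_n|r_2^{n-1}$, convert ``injective on $D_1$'' into $|c_1|\ge|c_n|r_1^{n-1}$ via the telescoping factorization of $\psi(z)-\psi(w)$, and bridge them with $|n|\,p^{(n-1)/(p-1)}\ge 1$, which is correct (it reduces to $(p-1)v_p(n)\le n-1$, verified easily for all $n\ge 1$). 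That is a genuinely different route: more elementary, coordinate-based, and it makes transparent exactly where the constant $1/(p-1)$ comes from, whereas the Berkovich route packages the same arithmetic inside Faber's geometric bound on the ramification locus.

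Two points deserve more care. First, the Möbius reduction ``I may assume $\psi$ has no pole on $D_2$'' requires $\psi(D_2)\neq\mathbb{P}^1_{\mathbb{C}_p}$, so that some $q\notin\psi(D_2)$ exists and one can take $M$ with $M^{-1}(\infty)=q$. This is true under the hypothesis $D_2\cap\mathrm{Crit}(\psi)=\emptyset$: a disk mapping onto $\mathbb{P}^1$ must contain a critical point, by the non-archimedean Riemann--Hurwitz/Weierstrass-degree count for disk images (Benedetto's book, the disk-image results in Section~3, or Rivera--Letelier). You assert the reduction without citing or proving this; it is a standard fact, but it is not automatic and should be flagged as the one nontrivial input your argument borrows from the theory.

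Second, the parenthetical claim that the open/closed strictness issues ``agree consistently'' is too optimistic: the lemma as literally stated fails in the borderline case where $D_1$ is a closed disk, $D_2$ is an open disk, and $\mathrm{diam}(D_2)=p^{\delta_{\max}}\mathrm{diam}(D_1)$ exactly. For $\psi(z)=z+z^p/p$, take $D_2=D(0,1)$ (open, containing no critical points since those lie on $|z|=1$ and at $\infty$) and $D_1=\overline{D}(0,p^{-1/(p-1)})$; the hypotheses hold with equality, yet $\psi(0)=0=\psi(z_0)$ for any $z_0$ with $z_0^{p-1}=-p$, and $|z_0|=p^{-1/(p-1)}$ puts $z_0\in D_1$. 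Tracking your chain of inequalities shows exactly where this slips through: when $D_2$ is open the Newton-polygon bound is only non-strict, when $D_1$ is closed the factorization bound is only non-strict, and at $n=p$ the bridge inequality is itself an equality, so no strict inequality survives. In every other combination — closed $D_2$, or open $D_1$, or strict inequality in the diameter ratio — one of the three links is strict and the argument closes. This is consistent with Faber's statement, which places ramification points at distance $\le\delta_{\max}$ from the hull (non-strict), and it is harmless for the paper, whose only invocations (Proposition~\ref{prop:der}, Lemma~\ref{lem:exp1}) take $D_1$ and $D_2$ both open. Still, you should state explicitly which convention on disks your proof covers rather than wave at the issue.
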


If $\psi$ is injective on a disk, we can compute the derivative $\psi'$  by the following result.
\begin{lemma}[{\cite[Proposition 3.20]{Benedetto19}}]\label{lem:der}
Let $\psi\in\mathbb{C}_p(z)$ be a nonconstant rational map, and consider a disk $D\subset\mathbb{C}_p$ with $\infty\not\in\psi(D)$. If $\psi$ is injective on $D$, then for any $x\in D$,
$$|\psi'(x)|=\frac{\mathrm{diam}(\psi(D))}{\mathrm{diam}(D)}.$$
\end{lemma}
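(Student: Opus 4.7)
The plan is to reduce to a convergent power series on $D$ and exploit the non-archimedean Weierstrass/Newton-polygon structure. Write $D$ as a disk of diameter $r:=\mathrm{diam}(D)$ and fix $x\in D$. Since $\infty\notin\psi(D)$, the restriction of $\psi$ to $D$ admits a convergent Taylor expansion
$$\psi(z)=\psi(x)+\sum_{n\ge1}a_n(z-x)^n,\qquad a_n\in\mathbb{C}_p,$$
valid for all $z\in D$, where $a_1=\psi'(x)$. For any $z\in D$, the ultrametric inequality gives
$$|\psi(z)-\psi(x)|\le\max_{n\ge1}|a_n|\,|z-x|^n\le\max_{n\ge1}|a_n|\,r^n,$$
so the diameter of $\psi(D)$ is at most $\max_{n\ge1}|a_n|r^n$. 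The main content is the matching lower bound and the identification of which $n$ achieves the maximum.

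First I would argue that the maximum is attained at $n=1$, i.e.\ $|a_1|r\ge|a_n|r^n$ for all $n\ge1$. The idea: if some $n\ge2$ satisfied $|a_n|r^n>|a_1|r$, then the Newton polygon of the reduced power series $\psi(z)-\psi(x)-w$ (regarded as an entry of $\mathbb{C}_p[[z-x]]$ in $z$, for a free parameter $w$) would force multiple zeros inside $D$ for a suitable value of $w$ in $\psi(D)$, contradicting injectivity of $\psi$ on $D$. More concretely, one uses the standard non-archimedean fact that the number of solutions of $\psi(z)=w$ in the open (resp.\ closed) disk of radius $r$ around $x$ equals the number of slopes of the Newton polygon of $\psi(z)-w$ whose absolute value corresponds to that radius; injectivity forces this count to be one for every $w$ in $\psi(D)$, which in turn forces the slope coming from the linear term to dominate.

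Granting that $|a_1|r\ge|a_n|r^n$ for all $n$, for any $z\in D$ with $|z-x|=s\le r$ the ultrametric "dominant term" principle yields
$$|\psi(z)-\psi(x)|=|a_1|\,|z-x|=|\psi'(x)|\,s,$$
because the $n=1$ term strictly dominates in absolute value whenever it is non-zero, and if $\psi'(x)=0$ the injectivity would already fail on a small subdisk, giving $\psi'(x)\ne0$. Taking $s=r$ gives $\mathrm{diam}(\psi(D))=|\psi'(x)|\,r$, which is the asserted formula. The main obstacle will be the injectivity-to-Newton-polygon step: one must handle both cases where $D$ is an "open" or "closed" disk in the $\mathbb{C}_p$ sense (these coincide as point sets but differ in diameter conventions) and ensure the Newton-polygon slope analysis is carried out uniformly in the parameter $w\in\psi(D)$. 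Apart from this, the argument is routine non-archimedean calculus and can alternatively be read off from the Berkovich-analytic behavior invoked already in Lemma~\ref{lem:inj}.
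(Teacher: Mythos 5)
The paper does not prove this statement; it is quoted verbatim from \cite[Proposition 3.20]{Benedetto19}, so there is no internal proof to compare against. Your proposal is the standard power-series argument and is essentially correct: since $\psi$ has no poles on $D$, it is given by a convergent series $\psi(z)=\psi(x)+\sum_{n\ge1}a_n(z-x)^n$ with $a_1=\psi'(x)$; injectivity forces $a_1\neq0$ and forces the Weierstrass degree of $\psi-\psi(x)$ on every closed subdisk $\overline{D}(x,\rho)\subset D$ to equal one, which gives $|a_1|\rho\ge|a_n|\rho^n$ for all $n$ and all such $\rho$; letting $\rho\to\mathrm{diam}(D)$ (rather than literally setting $s=r$, which may be unattainable inside an open disk) combined with the trivial upper bound $\mathrm{diam}(\psi(D))\le\max_n|a_n|r^n$ yields the equality. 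This is surely the same argument as in \cite{Benedetto19}.

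One small correction to a side remark: you write that open and closed disks in $\mathbb{C}_p$ ``coincide as point sets but differ in diameter conventions.'' It is the opposite: when $r$ lies in the value group $|\mathbb{C}_p^\times|=p^{\mathbb{Q}}$, the open disk $D(c,r)$ is a \emph{strictly smaller} set than $\overline{D}(c,r)$, while both nevertheless have diameter $r$. This does not undermine your argument---the limiting procedure $\rho\to r$ treats both kinds of disks uniformly---but the case split you flagged as a ``main obstacle'' is really a non-issue once stated this way.
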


Moreover, we have the following relations on diameters among disks and their images. We say a set $A\subset\mathbb{C}_p$ is an (open) \emph{annulus} if $A=\{r<|z-z_0|<R\}$ for some $z_0\in\mathbb{C}_p$ and $0<r<R<+\infty$; and denote by
$$\mathrm{Mod}(A):=\log_p\frac{R}{r}.$$
On any annulus $A\subset\mathbb{C}_p$, a rational map $\psi\in\mathbb{C}_p$ has inner and outer Weierstrass degrees, see \cite[Definition 3.30]{Benedetto19}. If such degrees coincide, we say simply call this degree is the \emph{Weierstrass degree} of $\psi$ on $A$. 

\begin{lemma}[{\cite[Theorem 3.33]{Benedetto19}}]\label{lem:anu}
Let $\psi\in\mathbb{C}_p(z)$ be a nonconstant rational map. If $\psi$ has Weierstrass degree $m\ge 1$ on an annulus $A\subset\mathbb{C}_p$, then $\psi(A)$ is an annulus and
$$\mathrm{Mod}(\phi(A))=m\mathrm{Mod}(A).$$
In particular, if $\psi$ is injective on $A$, then $\mathrm{Mod}(\phi(A))=\mathrm{Mod}(A)$.
\end{lemma}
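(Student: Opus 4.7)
The plan is to prove this via Laurent series analysis. After a translation we may assume $A=\{r<|z|<R\}$. Since $\psi$ is rational and $\psi(A)$ is to be an annulus (rather than all of $\mathbb{P}^1_{\mathbb{C}_p}$), $\psi$ has no poles in $A$ and thus admits a convergent Laurent expansion $\psi(z)=\sum_{n\in\mathbb{Z}}a_n z^n$ on $A$. The key device is the valuation polygon: writing $v(\cdot)=-\log_p|\cdot|$, set
$$v_\psi(t):=\min_{n\in\mathbb{Z}}\bigl(v(a_n)+nt\bigr),\qquad t\in(-\log_p R,\,-\log_p r).$$
This is a piecewise-linear concave function of $t$ whose slopes are exactly the Weierstrass degrees of $\psi$ on the corresponding sub-annuli. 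The hypothesis that $\psi$ has a single Weierstrass degree $m\ge 1$ throughout $A$ forces $v_\psi$ to be affine with slope $m$ on the full interval, which is to say that a single monomial $a_m z^m$ uniformly dominates the remainder of the Laurent expansion on all of $A$.

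From this domination the modulus equality drops out. For $z\in A$ with $|z|=\rho\in(r,R)$, the ultrametric inequality gives $|\psi(z)|=|a_m|\rho^m$ whenever $|a_m z^m|$ strictly dominates every other term; by the slope hypothesis this holds for $z$ in the ``sphere'' $|z|=\rho$ outside a thin exceptional locus. Letting $\rho\to r^+$ and $\rho\to R^-$ yields the inclusion $\psi(A)\subseteq B:=\{|a_m|r^m<|w|<|a_m|R^m\}$, and surjectivity $\psi(A)=B$ then follows from a preimage count: for each fixed $w_0\in B$, Weierstrass preparation applied to $\psi(z)-w_0$ on an appropriate sub-annulus factors it as a unit times a polynomial of degree $m$, producing exactly $m$ preimages of $w_0$ in $A$. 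Putting these together,
$$\mathrm{Mod}(\psi(A))=\log_p\frac{|a_m|R^m}{|a_m|r^m}=m\log_p\frac{R}{r}=m\cdot\mathrm{Mod}(A),$$
and the injectivity clause is just the $m=1$ specialization.

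The step I expect to be the main obstacle is the surjectivity $\psi(A)=B$. The containment $\psi(A)\subseteq B$ is a direct ultrametric consequence of the dominance of $a_m z^m$, but showing every value in $B$ is actually attained requires Weierstrass preparation for Laurent series on a non-archimedean annulus together with careful tracking of how the factorization varies as the sub-annulus changes. Closely related is the need to handle points $z\in A$ at which $|a_m z^m|$ is tied with $|a_n z^n|$ for some $n\neq m$; at these ``resonant'' points one cannot read $|\psi(z)|$ off from a single term and must instead count preimages via the Newton polygon, which is exactly the technical heart of the argument.
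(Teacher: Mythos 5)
The paper does not prove this lemma; it is cited directly from Benedetto's book (Theorem 3.33 there), so there is no in-paper argument to compare against. Your Laurent-series/Newton-polygon sketch is the standard route to this result and is sound in outline. Two small remarks. First, the justification for ``no poles in $A$'' should not appeal to the desired conclusion that $\psi(A)$ is an annulus: the hypothesis that $\psi$ has a well-defined Weierstrass degree on $A$ already presupposes a convergent Laurent expansion on $A$, so $\psi$ has no poles there. Second, your worry about ``resonant'' points where $|a_m z^m|$ ties with some $|a_n z^n|$ is actually vacuous on the \emph{open} annulus: once the valuation polygon has constant slope $m$ on the open interval $(-\log_p R,-\log_p r)$, then for each $n\neq m$ the affine function $t\mapsto (v(a_n)-v(a_m))+(n-m)t$ is nonnegative on the interval and has nonzero slope, hence is strictly positive on the whole open interval; so $a_m z^m$ strictly dominates at every $z\in A$ and $|\psi(z)|=|a_m|\,|z|^m$ holds without exception, no thin exceptional locus needed. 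Surjectivity $\psi(A)=B$ remains the genuine technical step, but your plan via Weierstrass preparation on sub-annuli is the right one.
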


We end this section by the following inequality on $\psi'$; its proof is based on Lemmas \ref{lem:inj}, \ref{lem:der} and \ref{lem:anu}. We will repeatedly use it in Section \ref{sec:metric}.
\begin{proposition}\label{prop:der}
Let $\psi\in\mathbb{C}_p(z)$ be a nonconstant rational map, and pick $x\in\mathbb{C}_p\setminus\mathrm{Crit}(\psi)$. Let $D\subset\mathbb{C}_p$ be an (open) disk containing $x$ such that $D\cap\mathrm{Crit}(\psi)=\emptyset$ and $\infty\not\in\psi(D)$. Then
$$|\psi'(x)|\ge \frac{p^{-\delta_{\max}d'}\mathrm{diam}(\psi(D))}{p^{-\delta_{\max}}\mathrm{diam}(D)},$$
where $d'\ge 1$ is the degree of $\psi$ on $D$.
\end{proposition}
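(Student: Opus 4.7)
My plan is to find a small open sub-disk $D_x \subset D$ centered at $x$ on which $\psi$ is injective, use Lemma~\ref{lem:der} to express $|\psi'(x)|$ as a diameter ratio on $D_x$, and then exploit the modulus formula for annuli in Lemma~\ref{lem:anu} to compare $\mathrm{diam}(\psi(D_x))$ with $\mathrm{diam}(\psi(D))$.

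First, since $x \in D$ and $D$ is an open disk, I re-center and write $D = \{y \in \mathbb{C}_p : |y-x| < r\}$ with $r = \mathrm{diam}(D)$. I then set
$$D_x := \{y \in \mathbb{C}_p : |y-x| < p^{-\delta_{\max}} r\} \subset D.$$
Because $D \cap \mathrm{Crit}(\psi) = \emptyset$ and $\mathrm{diam}(D) = p^{\delta_{\max}} \mathrm{diam}(D_x)$, Lemma~\ref{lem:inj} (applied with $D_1 = D_x$, $D_2 = D$) yields injectivity of $\psi$ on $D_x$. Since $\infty \notin \psi(D) \supset \psi(D_x)$, Lemma~\ref{lem:der} gives
$$|\psi'(x)| = \frac{\mathrm{diam}(\psi(D_x))}{\mathrm{diam}(D_x)} = \frac{\mathrm{diam}(\psi(D_x))}{p^{-\delta_{\max}} \mathrm{diam}(D)}.$$

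To estimate $\mathrm{diam}(\psi(D_x))$ from below, I would next consider the open annulus
$$A := \{y \in \mathbb{C}_p : p^{-\delta_{\max}} r < |y-x| < r\} \subset D,$$
which has $\mathrm{Mod}(A) = \delta_{\max}$. Since $A$ contains no critical points of $\psi$ and $\infty \notin \psi(A)$, the Weierstrass degree $m$ of $\psi$ on $A$ is well-defined, and because $d'$ is the degree of $\psi$ on the ambient disk $D$, one has $m \le d'$. By Lemma~\ref{lem:anu}, $\psi(A)$ is an open annulus of modulus $m \delta_{\max} \le d' \delta_{\max}$. Since $\psi(D_x)$, $\psi(A)$ and $\psi(D)$ are concentric about $\psi(x)$ (the nested pair $D_x \subset D$ is mapped to nested disks about $\psi(x)$, with $\psi(A)$ sandwiched between their boundaries), this translates into
$$\log_p \frac{\mathrm{diam}(\psi(D))}{\mathrm{diam}(\psi(D_x))} = \mathrm{Mod}(\psi(A)) \le d' \delta_{\max},$$
i.e.\ $\mathrm{diam}(\psi(D_x)) \ge p^{-d' \delta_{\max}} \mathrm{diam}(\psi(D))$. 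Substituting this into the formula for $|\psi'(x)|$ above yields exactly the claim.

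The main obstacle I anticipate is justifying the bound $m \le d'$ on the Weierstrass degree and the concentric sandwiching $\psi(D_x) \subset \psi(D)$ with $\psi(A)$ in between; both rely on the classification of rational maps on non-archimedean disks and annuli in the absence of critical points (see \cite[Chapter~3]{Benedetto19}). The essential point is that, since $\psi$ has no critical points on $A$, the Weierstrass degree on $A$ cannot exceed the mapping degree on the enveloping disk $D$, so the worst case $m = d'$ is precisely what produces the stated inequality.
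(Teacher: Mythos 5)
Your strategy is essentially the paper's: apply Lemma~\ref{lem:inj} to get injectivity on the sub-disk $D_x$ (the paper's $D_1$), use Lemma~\ref{lem:der} to express $|\psi'(x)|$ as a diameter ratio, and then control $\mathrm{diam}(\psi(D))/\mathrm{diam}(\psi(D_x))$ via the modulus formula of Lemma~\ref{lem:anu}. The genuine gap is the assertion that ``since $A$ contains no critical points of $\psi$ and $\infty\notin\psi(A)$, the Weierstrass degree $m$ of $\psi$ on $A$ is well-defined.'' This is precisely the point that is not automatic. As the paper notes when introducing Lemma~\ref{lem:anu}, a rational map has \emph{inner} and \emph{outer} Weierstrass degrees on an annulus, and a single Weierstrass degree exists only when they agree; the absence of Type~I critical points in the classical annulus $A$ does not by itself force agreement, because the Berkovich ramification locus of $\psi$ can still meet the skeleton of $A$ (e.g.\ when a critical point lies just outside $D$, within hyperbolic distance $\delta_{\max}$ of its boundary), and across such a crossing the degree along the skeleton can jump. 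Consequently Lemma~\ref{lem:anu} cannot be applied to $A$ in one shot without further justification.

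The paper sidesteps this by dividing $D\setminus\overline{D}_1$ into finitely many subannuli $A_1,\dots,A_k$ on each of which $\psi$ does have a single Weierstrass degree $m_i$. Each $m_i\le d'$ (a generic point of $\psi(A_i)$ has at most $d'$ preimages in $D\supset A_i$, so at most $d'$ in $A_i$), so applying Lemma~\ref{lem:anu} to each $A_i$ and summing moduli gives $\log_p\bigl(\mathrm{diam}(\psi(D))/\mathrm{diam}(\psi(D_1))\bigr)=\sum_i m_i\,\mathrm{Mod}(A_i)\le d'\sum_i\mathrm{Mod}(A_i)=d'\delta_{\max}$, which is what you need. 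The obstacle you flag, namely $m\le d'$, is the benign one; the subdivision argument is the step you are missing. Incorporating it turns your proposal into the paper's proof.
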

\begin{proof}
Let $D_1\subset D$ be an (open) disk containing $x$ such that $\mathrm{diam}(D_1)=p^{-\delta_{\max}}\mathrm{diam}(D)$. Then by Lemma \ref{lem:inj}, we conclude that $\psi$ is injective on $D_1$. Hence by Lemma \ref{lem:der}, we have
$$|\psi'(x)|=\frac{\mathrm{diam}(\psi(D_1))}{\mathrm{diam}(D_1)}=\frac{\mathrm{diam}(\psi(D_1))}{p^{-\delta_{\max}}\mathrm{diam}(D)}.$$
Let $\overline{D}_1$ be the correpsonding close disk for $D_1$. Then the annulus $D\setminus\overline{D}_1$ can be divided into finitely many (distinct) subannuli such that $\psi$ has Weierstrass degree, which is at most $d'$, on each subannulus. Applying  Lemma \ref{lem:anu} to each such annulus, we have
$$\frac{\mathrm{diam}(\psi(D))}{\mathrm{diam}(\psi(D_1))}=\frac{\mathrm{diam}(\psi(D))}{\mathrm{diam}(\psi(\overline{D}_1))}\le\left(\frac{\mathrm{diam}(D)}{\mathrm{diam}(\overline{D}_1)}\right)^{d'}=\left(\frac{\mathrm{diam}(D)}{\mathrm{diam}(D_1)}\right)^{d'}=p^{\delta_{\max}d'},$$
and hence $\mathrm{diam}(\psi(D_1))\ge p^{-\delta_{\max}d'}\mathrm{diam}(\psi(D))$. Thus the conclusion follows.
\end{proof}

\section{Subhyperbolicity and expanding property}\label{sec:expansion}

In this section, we aim to prove Theorem \ref{thm:equivalence}. Fixing the notation as in previous sections, we first establish the following key propositions in Sections \ref{sec:index} and \ref{sec:metric}, and then complete the proof of Theorem \ref{thm:equivalence} in Section \ref{sec:proof1}.
\begin{proposition}\label{prop:main}
Let $\phi\in K(z)$ be a rational map of degree at least $2$ with $\infty\in F_{\mathbb{C}_p}(\phi)$. Set $P_J(\phi):=P(\phi)\cap J_{\mathbb{C}_p}(\phi)$ and denote $s:=\#P_J(\phi)$. Assume  that $s\ge 1$ and each point in $P_J(\phi)$ is eventually fixed. 
Then  there exist  continuous functions $\Xi: \mathbb{C}_p\to(0,1]$ and $B: \mathbb{C}_p\to(0,+\infty)$  and closed disks $\overline{D}_i$, $i=1,\dots, s$, in $\mathbb{C}_p$ with
$\#(\overline{D}_i\cap P_J(\phi))=1$ for each $1\le i\le s$ such that the following hold: denoting by $\overline{U}$ the union of $\overline{D}_i$, $1\le i\le s$,
\begin{enumerate}[{\rm (1)}]
\item the map $\Xi$ is constant on each $\overline{D}_i$ for $1\le i\le m$, and takes value $0$ on $\mathbb{C}_p\setminus\overline{U}$, moreover, for any $z\in \overline{D}_i$ with $\phi(z)\in\overline{U}$,
$$1-\Xi(z)=\deg_{z_0}\phi\cdot(1-\Xi(\phi(z))),$$
 where $z_0$ is the unique point contained in  $\overline{D}_i\cap P_J(\phi)$;
\item the map $B$ is constant in each $\overline{D}_i$, $1\le i\le s$, and takes value $1$ in $\mathbb{C}_p\setminus\overline{U}$; and

\item defining the  function $\theta: \mathbb{C}_p\to(0,+\infty]$ by
$$\theta(z)=\frac{B(z)}{\mathrm{dist}(z,P_J(\phi))^{\Xi(z)}},$$
then $\theta(z)\ge 1$ for any $z\in\mathbb{C}_p$, and  there exist $\Xi_{\mathrm{min}}\in(0,1)$ and $B_{\mathrm{max}}>0$ such that
$1\le \theta(z)\le B_\mathrm{max}/\mathrm{dist}(z,P_J(\phi))^{\Xi_\mathrm{min}}$ for $z\in\overline{U}$.
Moreover, for any finite extension $L$ of $K$, we have
\begin{enumerate}[{\rm (a)}]
\item   $\int_{V\cap L}\theta(z) d\nu_L(z)<+\infty$ for any bounded neighborhood $V\subset\mathbb{C}_p$ of $J_L(\phi)$; and
\item  there exists $\overline{W}\subset\mathbb{C}_p$ of $P_0(\phi)\cap J_{\mathbb{C}_p}(\phi)$ such that
$$\frac{\theta(\phi(z))}{\theta(z)}|\phi'(z)|>1.$$
\end{enumerate}
\end{enumerate}
\end{proposition}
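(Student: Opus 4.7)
The plan is to build $\Xi$ and $B$ as locally constant functions on a finite disjoint union $\overline{U}$ of closed disks $\overline{D}_i\ni z_i$, one around each point $z_i$ of $P_J(\phi)$, and to extend them trivially on $\mathbb{C}_p\setminus\overline{U}$. The defining equations in (1), the normalization in (2), and the three parts of (3) then reduce to a finite collection of constraints on the constants $\Xi(z_i)$, $B(z_i)$, and $\mathrm{diam}(\overline{D}_i)$, which can be satisfied simultaneously by propagating choices outward from the set $F\subseteq P_J(\phi)$ of $\phi$-fixed points (to which every point of $P_J(\phi)$ eventually maps by hypothesis).

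\textbf{Step 1 (backward recursion for $\Xi$ and choice of disks).} First I would pick $\Xi(z_\star)\in(0,1)$ at every $z_\star\in F$ and propagate backward along orbits via the forced equation $1-\Xi(z)=(\deg_z\phi)\bigl(1-\Xi(\phi(z))\bigr)$. Staying inside $(0,1)$ reduces to finitely many inequalities of the form $(1-\Xi(z_\star))\prod_k\deg_{\phi^k(z)}\phi<1$, arranged by choosing each $\Xi(z_\star)$ close enough to~$1$. In parallel I would pick closed disks $\overline{D}_i\ni z_i$, starting with those at $F$ and propagating backward along the orbit forest, small enough to be pairwise disjoint, to avoid extra critical points, and to satisfy $\phi(\overline{D}_i)\subseteq\overline{D}_{i'}$ whenever $\phi(z_i)=z_{i'}\in P_J(\phi)$. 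After further shrinking, the non-archimedean local Taylor expansion together with Proposition~\ref{prop:der} yields constants $c_i,c_i''>0$ such that $|\phi(z)-\phi(z_i)|=c_i|z-z_i|^{m_i}$ and $|\phi'(z)|=c_i''|z-z_i|^{m_i-1}$ for every $z\in\overline{D}_i$, where $m_i=\deg_{z_i}\phi$.

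\textbf{Step 2 (choice of $B$ and verification).} Set $B\equiv 1$ on $\mathbb{C}_p\setminus\overline{U}$ and $B(z)\equiv B_i$ on $\overline{D}_i$. The lower bound $\theta\ge 1$ is the pointwise inequality $B_i\ge\mathrm{diam}(\overline{D}_i)^{\Xi_i}$, and integrability over any bounded neighborhood of $J_L(\phi)$ reduces to the convergent non-archimedean integral $\int_{\overline{D}_i\cap L}|z-z_i|^{-\Xi_i}\,d\nu_L<\infty$, which holds since $\Xi_i<1$. For the expansion inequality in $(3)(b)$, substituting the normal forms of Step~1 into $(\theta(\phi(z))/\theta(z))|\phi'(z)|$ and invoking the recursion $1-\Xi_i=m_i(1-\Xi_{i'})$ makes the exponent of $|z-z_i|$ vanish identically; the remaining factor is the absolute constant $B_{i'}c_i''/(B_i c_i^{\Xi_{i'}})$, which one forces to be $>1$ by choosing the $B_i$ sequentially, from $F$ outward, with $B_{i'}/B_i$ sufficiently large at every non-fixed node of the orbit forest.

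\textbf{Main obstacle.} The sensitive case is at a fixed $z_\star\in F$, where $i=i'$ and the expansion ratio collapses to $c_\star''/c_\star^{\Xi_\star}=|\phi'(z_\star)|^{1-\Xi_\star}$ (here $m_\star=1$, since a critical fixed point would lie in an attracting component of $F_{\mathbb{C}_p}(\phi)$). Forcing this $>1$ requires $|\phi'(z_\star)|>1$, i.e., that every fixed $z_\star\in F$ is repelling; this is the genuine input needed, and it should follow from subhyperbolicity together with the standard non-archimedean classification of periodic points in the Julia set, possibly after first replacing $\phi$ by a suitable iterate $\phi^N$ to convert eventual periodicity into eventual fixedness and to push any indifferent multiplier strictly above~$1$. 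The remaining task---orchestrating the disks, the $\Xi_i$, and the $B_i$ in a mutually compatible way---is straightforward once one exploits the forest structure of the orbit graph on $P_J(\phi)$, rooted at $F$, and makes the choices from the roots outward.
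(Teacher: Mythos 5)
Your overall strategy matches the paper's: build $\Xi$ and $B$ as locally constant functions on disks around $P_J(\phi)$, propagate $\Xi$ backward from the fixed points along the orbit forest via the forced recursion $1-\Xi(z)=\deg_z\phi\cdot(1-\Xi(\phi(z)))$, use the local non-archimedean Taylor expansion to compute $\theta(\phi(z))|\phi'(z)|/\theta(z)$, observe that the recursion makes the exponent of $|z-z_i|$ vanish, and then force the residual constant above $1$ by choosing the $B_i$ sequentially outward. You also correctly identify the base case: a fixed point in $J_{\mathbb{C}_p}(\phi)$ must be repelling, so the collapsed ratio $|\phi'(z_\star)|^{1-\Xi_\star}$ exceeds $1$.

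There is, however, a concrete gap. The set $\overline W$ in item $(3)(\mathrm b)$ must be a neighborhood of $P_0(\phi)\cap J_{\mathbb{C}_p}(\phi)$, which contains not only $P_J(\phi)$ but also the Julia critical points $c\in\mathrm{Crit}(\phi)\cap J_{\mathbb{C}_p}(\phi)$ that do \emph{not} lie in $P_J(\phi)$. Near such a $c$, $\theta\equiv 1$ but $\theta\circ\phi$ blows up, and the quantity $\theta(\phi(z))|\phi'(z)|$ behaves like $|z-c|^{d_c-1-d_c\,\Xi(\phi(c))}$ with $d_c:=\deg_c\phi$; to make this $\geq 1$ one must have the strictly stronger bound
$$1-\Xi(\phi(c))<\frac{1}{d_c},$$
not merely $\Xi(\phi(c))\in(0,1)$. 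The paper enforces this by the invariant $\alpha(z)=\prod_{z'\in P_0(\phi)\cap\mathrm{orb}_\phi^-(z)}\deg_{z'}\phi$ and the uniform constraint $1-\Xi(z)<1/\alpha(z)$ along the orbit forest, which automatically absorbs the local degree $d_c$ of every such external critical point because $c\in P_0(\phi)\cap\mathrm{orb}_\phi^-(\phi(c))$. Your constraints, as stated, are of the form $(1-\Xi(z_\star))\prod_k\deg_{\phi^k(z)}\phi<1$ for $z$ ranging over the backward orbit forest inside $P_J(\phi)$, which only ensures $\Xi\in(0,1)$ there and does not automatically propagate a margin of size $1/d_c$ through a critical point $c$ lying outside $P_J(\phi)$. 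You would need to explicitly include these critical points in the family of inequalities (or introduce an invariant playing the role of $\alpha$), which is exactly the content of the paper's lemma on expansion near non-postcritical Julia critical points.

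A minor point on your ``Main obstacle'': the fact that a Julia fixed point is repelling in the non-archimedean setting is a standard consequence of the classification and does not require passing to an iterate or invoking subhyperbolicity; indeed no power of an indifferent multiplier ($|\lambda|=1$) ever becomes $>1$, so the iterate trick you suggest cannot convert an indifferent multiplier into a repelling one -- the correct resolution is simply that indifferent and superattracting periodic points lie in the Fatou set, hence do not occur in $P_J(\phi)$ at all.
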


\begin{proposition}\label{prop:main2}
Let $\phi\in K(z)$ be a rational map of degree at least $2$ with $\infty\in F_{\mathbb{C}_p}(\phi)$. If  $P_J(\phi):=P(\phi)\cap J_{\mathbb{C}_p}(\phi)\not=\emptyset$  and each point in $P_J(\phi)$ is eventually fixed, set $\theta$  as in Proposition \ref{prop:main}; and if  $P_J(\phi)=\emptyset$, set $\theta(z)\equiv1$ for any $z\in\mathbb{C}_p$.  Then for any finite extension $L$ of $K$ and for any $\lambda\ge 1$, there exists $N: =N(\phi, L, \lambda)\ge 1$ such that for all $z\in J_L(\phi)$ and all $n\ge N$,
$$\frac{\theta(\phi^{n}(z))}{\theta(z)}|(\phi^{n})'(z)|>\lambda.$$
\end{proposition}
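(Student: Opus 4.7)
Writing $H(z):=\theta(\phi(z))|\phi'(z)|/\theta(z)$, the chain rule gives
$$H_n(z):=\frac{\theta(\phi^n(z))}{\theta(z)}|(\phi^n)'(z)|=\prod_{k=0}^{n-1}H(\phi^k(z)),$$
and the proposition asks for $\inf_{z\in J_L(\phi)}H_n(z)>\lambda$ whenever $n\ge N$. When $P_J(\phi)=\emptyset$, $\theta\equiv 1$, $\phi$ has no critical points in its Julia set, and Benedetto's expansion theorem for hyperbolic non-archimedean rational maps \cite{Benedetto01} applies directly to the compact set $J_{\mathbb{C}_p}(\phi)\cap L$. Henceforth I assume $P_J(\phi)\ne\emptyset$ and let $\overline{W}\supset P_0(\phi)\cap J_{\mathbb{C}_p}(\phi)$ be the neighborhood from Proposition~\ref{prop:main}(3)(b), on which $H>1$.

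The plan is a two-step bootstrapping argument: first produce a single $N_0$ with a uniform gain $\inf_{J_L(\phi)}H_{N_0}\ge\lambda_0>1$, then iterate the chain rule. For the gain, I analyze the orbit of an arbitrary $z\in J_L(\phi)$ through its interactions with $\overline{W}$. A visit to a slightly shrunken compact neighborhood $\overline{W}'\subset\overline{W}$ contributes a factor $H(\phi^k(z))\ge\mu_0>1$ uniformly, by continuity of $H$ off $P_J(\phi)$ together with Proposition~\ref{prop:main}(3)(b). The compatibility identity $1-\Xi(z)=\deg_{z_0}\phi\cdot(1-\Xi(\phi(z)))$ of Proposition~\ref{prop:main}(1) makes $H$ extend continuously across the finitely many postcritical periodic points $z_0\in P_J(\phi)$ (necessarily repelling, since non-repelling periodic points of a subhyperbolic map lie in the Fatou set), preserving $\mu_0>1$. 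Excursions through $J_L(\phi)\setminus\overline{W}'$ take place in a compact set disjoint from $\mathrm{Crit}(\phi)$ on which $\theta$ is squeezed between positive constants; Lemmas~\ref{lem:inj} and \ref{lem:der} together with Proposition~\ref{prop:der}, in the spirit of Benedetto's non-archimedean hyperbolic expansion, furnish a uniform $n_1$ and $\kappa>1$ such that any length-$n_1$ excursion multiplies the product of $|\phi'|$-factors by at least $\kappa$, while short excursions cost at most a bounded multiplicative constant. By the pigeonhole principle, an orbit segment of length $N_0$ either contains at least $N_0/n_1$ visits to $\overline{W}'$, giving a net factor $\ge\mu_0^{N_0/n_1}$ times a controlled short-excursion bound, or contains a full length-$n_1$ excursion contributing $\kappa$. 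Taking $N_0$ large enough that both scenarios exceed a chosen $\lambda_0>1$ yields the uniform bound.

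Once $\inf_{J_L(\phi)}H_{N_0}\ge\lambda_0$ is secured, the chain-rule identity $H_{kN_0}(z)=\prod_{j=0}^{k-1}H_{N_0}(\phi^{jN_0}(z))$ gives $H_{kN_0}(z)\ge\lambda_0^k$, and since each $H_r$ ($0\le r<N_0$) is continuous on the compact set $J_L(\phi)=J_{\mathbb{C}_p}(\phi)\cap L$ (Lemma~\ref{lem:Fatou1}) and bounded below by a positive constant $c$, we obtain $H_n(z)\ge c\lambda_0^{\lfloor n/N_0\rfloor}>\lambda$ for all $n\ge N$ and $z\in J_L(\phi)$. The principal obstacle is the uniform $N_0$-step bound: the delicate point is that the $H$-factors in the transitions between $\overline{W}$ and its complement must not accumulate a net loss. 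It is precisely the matching of the exponents $\Xi(z_0)$ to the local degrees and the choice of $B(z_0)$ in Proposition~\ref{prop:main}(2) that keeps $H$ continuous across $\partial\overline{W}$ and prevents cancellation, allowing the gains $\mu_0$ and $\kappa$ to be harvested as described.
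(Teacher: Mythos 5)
Your high-level strategy is the same as the paper's: use the chain rule to decompose $H_n(z)$ into one-step factors, exploit the one-step gain $D_\theta\phi>\widehat C_0>1$ on $\overline W$ (the paper's Corollary~\ref{cor:lower}), control the iterates elsewhere with the non-archimedean disk lemmas, and then concatenate uniform gains. The bootstrapping step --- first secure a uniform bound $\inf_{J_L(\phi)}H_{N_0}\ge\lambda_0>1$, then iterate --- is also what the paper does. However, the central technical step of your plan, the pigeonhole dichotomy, has a genuine gap. In scenario (b) a single $n_1$-block being a full excursion contributes one factor $\ge\kappa$; this says nothing about the remaining $N_0-n_1$ steps, which could carry factors as small as $C_0<1$, so $H_{N_0}$ need not exceed $\lambda_0$. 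In scenario (a) you obtain $\mu_0^{N_0/n_1}$ from visits, but the other $N_0(1-1/n_1)$ steps could also incur losses, so the best you can say is $H_{N_0}\ge\mu_0^{N_0/n_1}C_0^{N_0(1-1/n_1)}$; this exceeds $1$ only if $\mu_0 C_0^{n_1-1}>1$, which is not established and in fact need not hold. Moreover, the existence of a single uniform $n_1$ with $\kappa>1$ over all length-$n_1$ excursions is itself a nontrivial claim that you do not justify.

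What closes these gaps in the paper is Lemma~\ref{lem:exp1}: for each $z\in J_L(\phi)$ there is a \emph{locally constant} $n_z$ (hence, by compactness of $J_L(\phi)$, bounded by some $n_{\max}$) for which $D_\theta\phi^{n_z}(z)>C_1>1$ uniformly. The proof of that lemma is the heart of the matter: one fixes a tiny disk $W_z$ (of diameter $<p^{-\delta_{\max}d^{2d-2}}\epsilon_{\min}$) on which $\phi$ is initially injective, pushes it forward to the first time $\ell_z$ it meets a critical point, and then splits into cases according to whether the iterate disk meets a pole or a non-Julia critical point (then its diameter is already $\ge\epsilon_{\min}$ and Lemma~\ref{lem:der} gives expansion), whether $\phi^{\ell_z}(z)$ lands in a carefully chosen disk $D_c$ around a Julia critical point (then $m_0$ extra steps inside $\overline W$ give $\widehat C_0^{m_0}\Delta_z>1$), or neither (then one pushes forward the maximal critical-point-free disk containing $\phi^{\ell_z}(z)$ until it escapes $\overline W$, invoking Proposition~\ref{prop:der}). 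This is precisely the machinery you gesture at with ``Lemmas~\ref{lem:inj} and~\ref{lem:der} together with Proposition~\ref{prop:der}'', but the per-point $n_z$ and its local constancy, rather than a global $n_1$ and a visit/excursion pigeonhole, is what makes the argument close. Once Lemma~\ref{lem:exp1} is available, Proposition~\ref{prop:main2} follows by greedily concatenating $n_z$-step gains: one chooses $N$ a multiple of $n_{\max}$ with $C_1^{N/n_{\max}}C_0^{n_{\max}}>\lambda$, and any $n\ge N$ decomposes into at least $N/n_{\max}$ full gain blocks plus a tail of length $<n_{\max}$ bounded below by $C_0^{n_{\max}}$.
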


For brevity, we will  use the following notation. For $x\in\mathbb{C}_p$ and $r\ge 0$, denote by $D(x,r)$ (resp.  $\overline{D}(x,r)$) the open (resp. closed) disk (in the $|\cdot|$-metric sense) at $x$ with radius $r$. 

\subsection{Proof of Proposition \ref{prop:main}}\label{sec:index}
In this subsection, we aim to prove Proposition \ref{prop:main}.
Let us first construct desired functions $\Xi$ and $B$. We mainly work on the points near $P_J(\phi)$ and then trivially extend to $\mathbb{C}_p$.

{\bf Step 0: local degrees on $P_J(\phi)$.} For each point $z\in\mathbb{C}_p$, we set
\begin{equation*}\label{equ:alpha}
\alpha(z):=\prod_{z'\in P_0(\phi)\cap\mathrm{orb}_\phi^{-}(z)}\deg_{z'}\phi,
\end{equation*}
where $\mathrm{orb}_\phi^{-}(z):=\{w\in\mathbb{C}_p: \exists \ n\ge 1\ \text{such that}\ \phi^n(w)=z\}$. If $z\in P_J(\phi)$, we have that
\begin{equation*}\label{equ:00}
2\le\alpha(z)<(\deg\phi)^{2\deg\phi-2}<+\infty,
\end{equation*}
since $\phi$ has at most $2\deg\phi-2$ critical points in $\mathbb{C}_p$; in particular,
\begin{equation}\label{equ:000}
\alpha(\phi(c))\ge \deg_c\phi
\end{equation}
for any $c\in\mathrm{Crit}(\phi)$ with $\phi(c)\in P_J(\phi)$.

Let us now focus on a critical orbit. Pick a Julia critical point $z_0\in J_{\mathbb{C}_p}(\phi)\cap\mathrm{Crit}(\phi)$.  Let $\ell\ge1$ be the smallest integer such that $z_\ell:=\phi^\ell(z_0)$ is a fixed point, and for $0\le j\le\ell$, set $z_j:=\phi^j(z_0)$. We will 
consider points near each $z_j$ in the following three steps.


{\bf Step 1: a neighborhood of $z_j$. }
Fix $\epsilon_\ell:=\epsilon(z_\ell)\in(0,1)$ small enough. 
 For $1\le j\le \ell-1$, let $\epsilon_j:=\epsilon(z_j)>0$ such that $\phi(\overline{D}(z_j,\epsilon_j))=\overline{D}(z_{j+1},\epsilon_{j+1})$. Moreover, shrinking $\epsilon_\ell$ if necessary, we can assume that for any $1\le j\le \ell$, the following hold:
\begin{enumerate}
\item If $z_j$ is not a critical point, then $|\phi'(z)|$ is constant in $\overline{D}(z_j,\epsilon_j)$, so
$|\phi(z)-\phi(z_j)|=|a_j||z-z_j|$ for $a_j:=\phi'(z_j)$.

\item  If $z_j$ is a critical point, then $|\phi(z)-\phi(z_j)|=|a_j||z-z_j|^{\deg_{z_j}\phi}$ in $\overline{D}(z_j,\epsilon_j)$ for some $a_j\in K\setminus\{0\}$ independent of $z\in\overline{D}(z_j,\epsilon_j)$.

\item  We have $\epsilon_j\in(0,1)$ sufficiently small such that 
the disks $\overline{D}(z_j,\epsilon_j)$ for $1\le j\le \ell-1$  and $\phi(\overline{D}(z_\ell,\epsilon_\ell))$ are pairwisely disjoint and contain no points in $P_0(\phi)\setminus\{z_1,\dots, z_\ell\}$.
\end{enumerate}


In next two steps, we will work on the union $\cup_{j=1}^{\ell}\overline{D}(z_j,\epsilon_j)$.

{\bf Step 2: points near $z_\ell$. } Since $z_\ell$ is a repelling fixed point of $\phi$, setting
$$\epsilon'_\ell:=\frac{\epsilon_\ell}{|a_\ell|},$$ we have $0<\epsilon'_\ell<\epsilon_\ell$ and $\phi(\overline{D}(z_\ell,\epsilon'_\ell))=\overline{D}(z_\ell,\epsilon_\ell)$. 
 We now pick $\xi_\ell:=\xi(z_\ell)\in(0,1)$ small enough such that
\begin{equation}\label{equ:100}
0<1-\xi_\ell<\frac{1}{\alpha(z_\ell)},
\end{equation}
and choose $\beta_\ell:=\beta(z_\ell)$ such that
$$0<\beta_\ell<\epsilon_\ell.$$
We then observe that for any $z\in\overline{D}(z_\ell,\epsilon_\ell)\setminus\overline{D}(z_\ell,\epsilon'_\ell)$,
\begin{multline}\label{equ:R0}
\frac{1}{\beta_\ell/|z-z_\ell|^{\xi_\ell}}|\phi'(z)|=\frac{1}{\beta_\ell/|z-z_\ell|^{\xi_\ell}}|\phi'(z_\ell)|=\frac{|a_\ell|}{\beta_\ell/|z-z_\ell|^{\xi_\ell}}\\
=\frac{|a_\ell||z-z_\ell|^{\xi_\ell}}{\beta_\ell}>\frac{|a_\ell|{\epsilon'_\ell}^{\xi_\ell}}{\epsilon_\ell}=\frac{|a_\ell|^{1-\xi_\ell}}{\epsilon_\ell^{1-\xi_\ell}}>1,
\end{multline}
and that for any $z\in\overline{D}(z_\ell,\epsilon'_\ell)\setminus\{z_\ell\}$,
\begin{multline}\label{equ:R1}
\frac{\beta_\ell/|\phi(z)-z_\ell|^{\xi_\ell}}{\beta_\ell/|z-z_\ell|^{\xi_\ell}}|\phi'(z)|=\frac{\beta_\ell/|\phi(z)-z_\ell|^{\xi_\ell}}{\beta_\ell/|z-z_\ell|^{\xi_\ell}}|\phi'(z_\ell)|=\frac{\beta_\ell/|\phi(z)-z_\ell|^{\xi_\ell}}{\beta_\ell/|z-z_\ell|^{\xi_\ell}}\frac{|\phi(z)-z_\ell|}{|z-z_\ell|}\\
=\left(\frac{|\phi(z)-z_\ell|}{|z-z_\ell|}\right)^{1-\xi_\ell}
=|a_\ell|^{1-\xi_\ell}>1,
\end{multline}
since both $\epsilon_\ell$ and $\xi_\ell$ are in $(0,1)$ and $|a_\ell|>1$.

{\bf Step 3: points near $z_j$ for $1\le j\le\ell-1$. }

We now pull back $\xi_\ell$ and $\beta_\ell$ at $z_\ell$  in step $1$ inductively along the orbit $z_1,\cdots,z_\ell$ in the following way. Inspired by \eqref{equ:100}, suppose that $\xi_{j+1}:=\xi(z_{j+1})\in(0,1)$ and $\beta_{j+1}:=\beta(z_{j+1})>0$ are well-assigned at $z_{j+1}$ such that
\begin{equation}\label{equ:01}
0<1-\xi_{j+1}<\frac{1}{\alpha(z_{j+1})}.
\end{equation}
We pick $\xi_j:=\xi(z_j)\in(0,1)$ such that
\begin{equation}\label{equ:beta}
1-\xi_j=\deg_{z_j}\phi\cdot (1-\xi_{j+1}),
\end{equation}
and choose $\beta_j=\beta(z_j)$ such that
\begin{equation}\label{equ:B1}
0<\beta_j<\frac{1}{2}\beta_{j+1}|a_j|^{1-\xi_{j+1}}|\deg_{z_j}\phi|.
\end{equation}
Then by \eqref{equ:beta} and  \eqref{equ:B1},  for any $z\in\overline{D}(z_j,\epsilon_j)\setminus\{z_j\}$, we have
\begin{multline*}
\frac{\beta_j|\phi(z)-z_{j+1}|^{\xi_{j+1}}}{|z-z_j|^{\xi_j}\beta_{j+1}}=\frac{\beta_j|a_j|^{\xi_{j+1}}|z-z_j|^{\xi_{j+1}\deg_{z_j}\phi}}{|z-z_j|^{\xi_j}\beta_{j+1}}
< \frac{1}{2}|a_j||\deg_{z_j}\phi||z-z_j|^{\deg_{z_j}\phi-1}
=\frac{|\phi'(z)|}{2}.
\end{multline*}
Hence 
 we conclude that for any $z\in\overline{D}(z_j,\epsilon_j)\setminus\{z_j\}$,
\begin{equation}\label{equ:j}
\frac{\beta_{j+1}/|\phi(z)-z_{j+1}|^{\xi_{j+1}}}{\beta_j/|z-z_j|^{\xi_j}}|\phi'(z)|>2>1.
\end{equation}
Moreover, by \eqref{equ:01} and \eqref{equ:beta}, we have
\begin{equation}\label{equ:degree}
0<1-\xi_j<\frac{\deg_{z_j}\phi}{\alpha(z_{j+1})}\le\frac{1}{\alpha(z_j)}.
\end{equation}

By steps $2$ and $3$, for any point $z_j\in P_J(\phi)$, $1\le j\le \ell$ in the forward orbit of $z_0$, we obtain numbers $\xi(z_j)$ and $\beta(z_j)$. 
Shrinking $\epsilon_\ell$ and $\epsilon_j$ accordingly, we can assume that for any $z\in \overline{D}(z_j, \epsilon_j)$
$$\frac{\beta(z_j)}{|z-z_j|^{\xi(z_j)}}\ge 1,$$
and define
$$\overline{U}_{z_0}:=\cup_{j=1}^{\ell}\overline{D}(z_j,\epsilon_j)$$
Next, we will consider all points near  $P_J(\phi)$ in step 4.

{\bf Step 4: points near $P_J(\phi)$. }
Consider a small neighborhood of the fixed points in $P_J(\phi)$. We repeatedly apply step 1 for all critical orbits in $J_L(\phi)$. Shrinking the initial neighborhood of the fixed points in $P_J(\phi)$ if necessary, for any $c\in\mathrm{Crit}(\phi)\cap J_L(\phi)$ we can obtain $\overline{U}_c$  satisfying (1)-(3) in step 1 such that for any  $z_0, z_0'\in\mathrm{Crit}(\phi)\cap J_L(\phi)$, if $\phi^{m}(z_0)=\phi^{m'}(z_0')$ are contained in $P_L(\phi)$ for some integers $m\ge 0$ and $m'\ge 0$, then the disk component of $\overline{U}_{z_0}$ containing $\phi^{m}(z_0)$ coincides to the the disk component of $\overline{U}_{z'_0}$ containing $\phi^{m'}(z'_0)$.
We set
\begin{equation}\label{equ:U}
\overline{U}=\overline{U}(\phi):=\bigcup_{z_0\in\mathrm{Crit}(\phi)\cap J_L(\phi)}\overline{U}_{z_0}.
\end{equation}
Applying repeatedly steps 2 and 3 for all critical orbits in $\overline{U}$, we can obtain $\xi(z)$ and $\beta(z)$ for any point $z\in P_J(\phi)\cap \overline{U}$.


\smallskip

Observe that for any $z\in \overline{U}$, there is a unique point in $P_J(\phi)$, denoted by $\iota(z)$, such that $\mathrm{dist}(z,P_J(\phi))=|z-\iota(z)|$; moreover, by the choice of $ \overline{U}$, it follows that $\phi(\iota(z))=\iota(\phi(z))$.
Now we can define $\Xi=\Xi_\phi:\mathbb{C}_p\to[0,1)$ by
\begin{equation*}\label{equ:xi}
\Xi(z):=\begin{cases}
\xi(\iota(z))\ &\text{if}\ z\in \overline{U},\\
0\ &\text{if otherwise},
\end{cases}
\end{equation*}
and define $B=B_\phi:\mathbb{C}_p\to(0,1]$ by
\begin{equation*}\label{equ:B}
B(z):=\begin{cases}
\beta(\iota(z))\ &\text{if}\ z\in \overline{U},\\
1\ &\text{if otherwise}.
\end{cases}
\end{equation*}




From the above construction, we obtain Proposition \ref{prop:main} (1) and (2)  immediately.  For Proposition \ref{prop:main} (3), by the finiteness of $P(\phi)$, we set $\Xi_{\mathrm{min}}=\min\{\Xi(z):z\in P(\phi)\}$ and $B_{\mathrm{max}}=\max\{B(z):z\in P(\phi)\}$. Then we obtain the desired bound for $\theta$. 




 We now prove Proposition \ref{prop:main} (3a) in the following result. Denote by $\pi_L$ a uniformizer of $L$, and let $S_r(x):=\{y\in\mathbb{C}_p: |y-x|=r\}\subset\mathbb{C}_p$ be the sphere at $x$ with radius $r$.
\begin{lemma}\label{lem:integral}
If $V\subset\mathbb{C}_p$ is a bounded neighborhood  of $J_L(\phi)$, then
$$\int_{V\cap L} \theta(z)d\mu_L(z)<+\infty.$$
\end{lemma}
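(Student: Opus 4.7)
The plan is to split $V \cap L$ into a bounded region where $\theta$ is bounded and finitely many local pieces near the exceptional points of $\theta$, then reduce everything to a single local computation performed via spherical decomposition. On $V \setminus \overline{U}$ the function $\theta$ equals $1$ identically by Proposition \ref{prop:main}(2), so the contribution is $\mu_L((V\setminus\overline{U})\cap L)$, which is finite because $V$ is bounded and $\mu_L$ is a Haar measure on the locally compact field $L$. It therefore suffices to bound $\int_{\overline{D}_i\cap L}\theta\,d\mu_L$ for each of the finitely many disks $\overline{D}_i$ supplied by Proposition \ref{prop:main}.

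Fix one such disk and let $z_i\in P_J(\phi)$ be its unique exceptional point. If $z_i\notin L$, then $\overline{D}_i\cap L$ is a compact subset of $L$ that avoids $z_i$, so $\inf_{z\in\overline{D}_i\cap L}|z-z_i|>0$; hence $\theta$ is bounded on $\overline{D}_i\cap L$ and the integral is trivially finite. Otherwise $z_i\in L$, and Proposition \ref{prop:main}(3) yields $\theta(z)\le B_{\max}/|z-z_i|^{\Xi_{\min}}$ with $0<\Xi_{\min}<1$, so it is enough to show that $I:=\int_{\overline{D}_i\cap L}|z-z_i|^{-\Xi_{\min}}\,d\mu_L(z)$ is finite. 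Writing $\eta:=|\pi_L|\in(0,1)$, I would decompose $(\overline{D}_i\setminus\{z_i\})\cap L$ into the disjoint union of the spheres $S_{\eta^n}(z_i)\cap L$ for $n\ge n_0$, which is exhaustive because $|L^\times|=\eta^{\mathbb{Z}}$ is discrete. On each such sphere $|z-z_i|$ equals the constant $\eta^n$, while the Haar measure of the sphere is a fixed positive constant multiple of $\eta^{dn}$, where $d:=[L:\mathbb{Q}_p]$.

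Putting the pieces together yields
$$I\ =\ C\sum_{n\ge n_0}\eta^{-n\Xi_{\min}}\eta^{dn}\ =\ C\sum_{n\ge n_0}\eta^{n(d-\Xi_{\min})}$$
for some constant $C>0$, a convergent geometric series with ratio $\eta^{d-\Xi_{\min}}<1$, since $\Xi_{\min}<1\le d$. Summing over the finitely many disks $\overline{D}_i$ completes the argument. I do not expect any real obstacle here: the content of the proof is precisely the exponent condition $\Xi_{\min}<1$ engineered into the construction of $\theta$ in Proposition \ref{prop:main}(3), together with the standard geometric scaling of Haar measures of disks in the discretely valued local field $L$.
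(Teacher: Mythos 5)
Your proof is correct and follows essentially the same route as the paper: reduce to neighborhoods of the finitely many exceptional points, bound $\theta$ by $B_{\max}/|z-z_i|^{\Xi_{\min}}$, decompose into spheres about $z_i$, and sum a geometric series whose convergence is exactly the exponent condition $\Xi_{\min}<1\le d$. The only cosmetic difference is that the paper writes the sphere measures in terms of the residue-field cardinality $A_L$ (getting the ratio $|\pi_L|^{\Xi(z_0)}A_L>1$) rather than your $\eta^{d}$ with $d=[L:\mathbb{Q}_p]$; these are the same quantity, since $A_L=|\pi_L|^{-d}$. You also spell out the easy case $z_i\notin L$, which the paper dispatches implicitly by only considering points of $P_J(\phi)\cap L$.
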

\begin{proof}
It suffices to show that for any $z_0\in P_J(\phi)\cap L$, there is a small open neighborhood $D\subset\mathbb{C}_p$ of $z_0$ such that $\int_{D\cap L}\theta(z) d\mu_L(z)<\infty$.
Recall $\overline{U}$ as in \eqref{equ:U} and consider a small open disk $D\subset\overline{U}$ of $z_0$. We check that $D$ is a desired neighborhood. For $z\in D\setminus\{z_0\}$, denote by $n:=n(z)$ the integer such that $|z-z_0|=|\pi_L|^{n}$. Then in the sphere $S_{|z-z_0|}(z_0)$, we have
\begin{equation}\label{equ:z}
\theta(z)=\frac{B(z)}{\mathrm{dist}(z,P_J(\phi))^{\Xi(z)}} \le\frac{B_{\max}}{|z-z_0|^{\Xi(z)}}=B_{\max}\left(\frac{1}{|\pi_L|}\right)^{n\Xi(z_0)}.
\end{equation}

Let $A_L$ be cardinality of the residue classes of $L$, we have
$$\nu_L(S_{|z-z_0|}(z_0))=\frac{A_L-1}{A_L}\nu_L(\overline{D}(z_0,|z-z_0|))=\frac{A_L-1}{A_L} A_L^{-n}.$$
Noting that $A_L\ge p$ and $|\pi_L|\ge p^{-1}$
we conclude that $|\pi_L|^{\Xi(z_0)}A_L>1$ since $\Xi(z_0)\in(0,1)$.
Using \eqref{equ:z}, we compute
\begin{multline*}
\int_{D\cap L}\theta(z) d\nu_L(z)=\sum_{\substack{r\in |L^\times|\\ r<\mathrm{diam}(W)}}\int_{S_r(z_0)\cap L}\theta(z) d\mu_L(z)\\\le B_{\max}\frac{A_L-1}{A_L} \sum_{n\ge 1}\left(\left(\frac{1}{|\pi_L|}\right)^{n\Xi(z_0)}A_L^{-n}\right)
=B_{\max}\frac{A_L-1}{A_L} \sum_{n\ge 1}\left(\frac{1}{|\pi_L|^{\Xi(z_0)}A_L}\right)^n<+\infty.
\end{multline*}
The conclusion follows.
\end{proof}

To the end of this section, for brevity,  we use the following definition

\begin{definition}[{The $\theta$-Derivative}]\label{def:Dtheta}
Let $\theta$ be as above for the subhyperbolic rational map $\phi\in K(z)$, and let $\psi\in\mathbb{C}_p(z)$ be a non-constant rational map.
We say $\psi$ is \emph{$\theta$-differentiable} at  $z\in\mathbb{C}_p$ if the following limit exists
$$\lim_{n\to\infty}\frac{\theta(\psi(z_n))|\psi'(z_n)|}{\theta(z_n)},$$
where $\{z_n\}_{n\ge1}\subset\mathbb{C}_p$ is any sequence converging to $z$ as $n\to\infty$;
and call the limit is the \emph{$\theta$-derivative} of $\psi$ at $z$.
\end{definition}
Remark that the $\theta$-derivative of $\phi$ at some point could be $+\infty$.

Observe that for any integer $m\ge 1$, the $\theta$-derivative of $\phi^m$ at any $z\in\mathbb{C}_p$ exists, and moreover, if  $z\notin P_0(\phi^m)$,
$$D_\theta\phi^m(z)=\frac{\theta(\phi^m(z))|(\phi^m)'(z)|}{\theta(z)}.$$


To establish Proposition \ref{prop:main} (3b),  we begin to show an expanding property of $\phi$ near the Julia critical points not contained in $P(\phi)$ with respect to $\theta$.

\begin{lemma}\label{lem:crit}
Suppose that  $z_0\in\mathrm{Crit}(\phi)\setminus P_J(\phi)$ is a point contained in $J_{\mathbb{C}_p}(\phi)$. Then there exists an $\epsilon:=\epsilon_{z_0}>0$ such that  for any $z\in\overline{D}(z_0,\epsilon)$,
$$D_\theta\phi(z)>1.$$
\end{lemma}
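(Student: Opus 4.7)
The plan is to analyze the local Taylor expansion of $\phi$ at $z_0$, exploit the fact that $\phi(z_0)$ must lie in $P_J(\phi)$, and then combine these with the key bound $1-\xi(\phi(z_0))<1/\deg_{z_0}\phi$ coming from the construction of $\theta$.

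First I would set up the geometry near $z_0$. Since $z_0\in\mathrm{Crit}(\phi)\cap J_{\mathbb{C}_p}(\phi)$, forward invariance of the Julia set together with the definition of the postcritical set gives $\phi(z_0)\in P_J(\phi)$. Because $z_0\notin P_J(\phi)$ and $P_J(\phi)$ is finite, for $\epsilon>0$ sufficiently small the disk $\overline{D}(z_0,\epsilon)$ is disjoint from $\overline{U}$ (so $\theta\equiv 1$ there), contains no critical point of $\phi$ besides $z_0$, and satisfies $\phi(\overline{D}(z_0,\epsilon))\subseteq\overline{D}(\phi(z_0),\epsilon_1)$, the disk around $\phi(z_0)$ used in the construction of $\theta$. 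In particular, for $z\in\overline{D}(z_0,\epsilon)\setminus\{z_0\}$ one has $\iota(\phi(z))=\phi(z_0)$ and
$$\theta(z)=1,\qquad \theta(\phi(z))=\frac{B(\phi(z_0))}{|\phi(z)-\phi(z_0)|^{\xi(\phi(z_0))}}.$$

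Next I would extract the two local $p$-adic quantities. Writing $m:=\deg_{z_0}\phi\ge 2$ and expanding
$$\phi(z)-\phi(z_0)=a(z-z_0)^m+\sum_{k>m}a_k(z-z_0)^k,\quad a\neq 0,$$
the ultrametric inequality makes the leading term dominate once $|z-z_0|$ is smaller than an explicit threshold depending on $|ma|$ and the tail coefficients; in particular,
$$|\phi(z)-\phi(z_0)|=|a|\,|z-z_0|^m,\qquad |\phi'(z)|=|ma|\,|z-z_0|^{m-1}.$$
Here $|ma|>0$ since $\mathbb{C}_p$ has characteristic zero. Plugging these into the definition of $D_\theta\phi(z)$ yields
$$D_\theta\phi(z)=\frac{B(\phi(z_0))\,|ma|}{|a|^{\xi(\phi(z_0))}}\,|z-z_0|^{\,m(1-\xi(\phi(z_0)))-1}.$$

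Finally I would check that the exponent is strictly negative. Iterating \eqref{equ:degree} along the orbit starting at $\phi(z_0)$ gives $1-\xi(\phi(z_0))<1/\alpha(\phi(z_0))$, and since $z_0\in P_0(\phi)$ is a preimage of $\phi(z_0)$, the definition of $\alpha$ forces $\alpha(\phi(z_0))\ge\deg_{z_0}\phi=m$. Hence $m(1-\xi(\phi(z_0)))-1<0$, so the factor $|z-z_0|^{m(1-\xi(\phi(z_0)))-1}$ becomes arbitrarily large as $|z-z_0|\to 0$. A final shrinking of $\epsilon$ guarantees $D_\theta\phi(z)>1$ on $\overline{D}(z_0,\epsilon)\setminus\{z_0\}$; at $z=z_0$ the $\theta$-derivative equals $+\infty$ in the sense of Definition~\ref{def:Dtheta}, which is also $>1$. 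The main obstacle is the equality $|\phi'(z)|=|ma||z-z_0|^{m-1}$: a priori the tail derivatives $(m+k)a_{m+k}(z-z_0)^{m+k-1}$ could compete with the leading term due to the $p$-adic valuation of $m$ itself, so one must take $\epsilon$ small relative to $|ma|$ (not simply small in an absolute sense) to restore dominance of the leading term.
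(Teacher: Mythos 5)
Your argument is essentially the same as the paper's: you restrict to a disk on which $\theta\equiv 1$, expand $\phi$ at $z_0$ to get $|\phi(z)-\phi(z_0)|=|a||z-z_0|^{m}$ and $|\phi'(z)|=|ma||z-z_0|^{m-1}$, plug into $\theta(\phi(z))|\phi'(z)|$ to obtain the same expression with exponent $m(1-\Xi(\phi(z_0)))-1$, and use \eqref{equ:000} together with \eqref{equ:degree} to see the exponent is negative. Your extra remark about needing $\epsilon$ small relative to $|ma|$ (because of the $p$-adic valuation of $m$) is a legitimate refinement of the paper's ``shrinking $\epsilon$ if necessary,'' but it does not change the route.
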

\begin{proof}
Since $z_0\not\in P_J(\phi)$, we can choose $\epsilon>0$ small enough so that $\theta(z)=\theta(z_0)=1$ for all $z\in\overline{D}(z_0,\epsilon)$ and $\phi(\overline{D}(z_0,\epsilon))\subset\overline{U}$.
Writing $d_0:=\deg_{z_0}\phi\ge2$ and shrinking $\epsilon$ if necessary, we have that $|\phi(z)-\phi(z_0)|=|a||z-z_0|^{d_0}$ for some $a:=a_{z_0}\in K\setminus\{0\}$ and that $\phi(\overline{D}(z_0,\epsilon))\cap P_J(\phi)=\{\phi(z_0)\}$. We conclude that
$$\theta(\phi(z))=\frac{B_{z_0}}{(|a||z-z_0|^{d_0})^{\Xi(\phi(z_0))}},$$
where $B_{z_0}:=B(\phi(z))$ is constant in $\overline{D}(z_0,\epsilon)$.
Moreover, noting that 
$$|\phi'(z)|=|ad_0||z-z_0|^{d_0-1},$$
we conclude that
$$D_\theta\phi(z)=\theta(\phi(z))|\phi'(z)|=\frac{B_{z_0}|ad_0||z-z_0|^{d_0-1}}{(|a||z-z_0|^{d_0})^{\Xi(\phi(z_0))}}.$$
Since $\Xi(\phi(z_0))>(d_0-1)/d_0$ by \eqref{equ:000} and \eqref{equ:degree}, further shrinking $\epsilon$ if necessary, we conclude that $\theta(\phi(z))|\phi'(z)|>1$. Thus the conclusion follows.
\end{proof}

 Recall $\overline{U}$ in \eqref{equ:U}. We set
 \begin{equation}\label{equ:W}
 \overline{W}:=\overline{U}\bigcup\left(\bigcup_{z_0\in(\mathrm{Crit}(\phi)\setminus P_J(\phi))\cap J_{\mathbb{C}_p}(\phi)}\overline{D}(z_0,\epsilon_{z_0})\right),
 \end{equation}
 where $\epsilon_{z_0}$ is as in Lemma \ref{lem:crit}. Observe that from the construction of $\Xi$ and $B$ (see \eqref{equ:R0}, \eqref{equ:R1} and \eqref{equ:j}), for any $z\in \overline{U}$,
\begin{equation}\label{equ:p}
D_\theta\phi(z)>1
\end{equation}
We then obtain Proposition \ref{prop:main} (3b) by \eqref{equ:p} and Lemma \ref{lem:crit}.

 To end this subsection, we state the following consequence of Proposition \ref{prop:main} (3b).

\begin{corollary}\label{cor:lower}
There exists $C_0>0$ such that for any $z\in J_L(\phi)$,
$$D_\theta\phi(z)>C_0.$$
Moreover, there exists $\widehat C_0>1$ such that for any $z\in \overline{W}\cap L$,
$$D_\theta\phi(z)>\widehat C_0.$$
\end{corollary}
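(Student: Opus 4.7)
The plan is to split $J_L(\phi)$ into its intersection with $\overline{W}$ and its complement, and to handle each part separately, taking $C_0$ to be the smaller of the two resulting bounds.

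For the ``moreover'' part, on $\overline{W}\cap L$ the pointwise inequality $D_\theta\phi(z)>1$ is already provided by Proposition \ref{prop:main} (3b). To upgrade this to a uniform bound bounded away from $1$, I would use that $\overline{W}$ is a finite disjoint union of closed disks, indexed by the finitely many points in $P_J(\phi)$ and the finitely many Julia critical points outside $P_J(\phi)$. On each such disk the bound $D_\theta\phi(z)>1$ is produced by a single explicit computation in Section \ref{sec:index}: the closed-form expressions in \eqref{equ:R0}, \eqref{equ:R1}, \eqref{equ:j}, and the formula in the proof of Lemma \ref{lem:crit} give, respectively, zone-constants such as $(|a_\ell|/\epsilon_\ell)^{1-\xi_\ell}$, $|a_\ell|^{1-\xi_\ell}$, $2$, and (using that $d_0-1-d_0\Xi(\phi(z_0))<0$, which makes the formula monotone in $|z-z_0|$) an explicit value at the outer radius $\epsilon_{z_0}$. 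Each of these is strictly greater than $1$ by construction, using $|a_\ell|>1$, $\Xi(z_j)<1/\alpha(z_j)<1$, and $\Xi(\phi(z_0))>(d_0-1)/d_0$. Taking $\widehat C_0$ to be the minimum of this finite family of explicit constants gives the claim.

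For the main assertion, on $J_L(\phi)\cap\overline{W}$ the previous paragraph already delivers $D_\theta\phi(z)>\widehat C_0$. On the complementary set $J_L(\phi)\setminus\overline{W}$, every $z$ lies outside $\overline{U}$, so $\Xi(z)=0$ and $B(z)=1$, hence $\theta(z)=1$; combined with $\theta\ge 1$ everywhere on $\mathbb{C}_p$ from Proposition \ref{prop:main} (3), this yields
\[
D_\theta\phi(z)=\theta(\phi(z))\,|\phi'(z)|\ \ge\ |\phi'(z)|.
\]
Now $\overline{W}$ is open and contains every critical point of $\phi$ in $J_{\mathbb{C}_p}(\phi)$ (those in $P_J(\phi)$ via $\overline{U}$, the others via the disks added in \eqref{equ:W}); moreover the poles of $\phi$ lie in $F_{\mathbb{C}_p}(\phi)$, since $\infty\in F_{\mathbb{C}_p}(\phi)$ and the Fatou set is backward invariant. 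Therefore $|\phi'|$ is continuous and strictly positive on the compact set $J_L(\phi)\setminus\overline{W}$, where compactness uses that $J_L(\phi)$ is compact by the discussion in Section \ref{sec:intro} and that $\overline{W}$ is closed. It follows that $|\phi'|$ attains a positive minimum there. Setting $C_0$ to be the minimum of that value and $\widehat C_0$ completes the proof.

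The only nontrivial step is the first one: verifying that on each piece of $\overline{W}$ the pointwise bound $>1$ is genuinely uniform rather than degenerating to $1$ as $z$ approaches the exceptional point of $\theta$. This is where one really uses the sharp inequalities $1-\Xi(z_j)<1/\alpha(z_j)$ and $|a_j|>1$ that were built into the construction of $\Xi$ and $B$; once one keeps track of the signs of the exponents of $|z-z_j|$ in each formula, the uniform lower bound on each disk is immediate, and the rest is a standard compactness argument.
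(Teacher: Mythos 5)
Your decomposition (treat $\overline{W}\cap L$ and $J_L(\phi)\setminus\overline{W}$ separately and take the smaller bound) is exactly the paper's, and your handling of the complement agrees in substance with the paper's: you note $\theta\equiv 1$ there, so $D_\theta\phi(z)\ge|\phi'(z)|$, and then bound $|\phi'|$ below on a compact set free of Julia critical points and poles, while the paper invokes continuity of $\theta(\phi(\cdot))\,|\phi'(\cdot)|$ on the same compact set. The genuine difference — and an improvement — is in the \textquotedblleft moreover\textquotedblright\ part. The paper simply cites Proposition \ref{prop:main} (3b), which only gives the pointwise bound $D_\theta\phi(z)>1$; the promotion to a uniform $\widehat C_0>1$ is left implicit (it would again be a compactness-plus-continuity argument for the extended-real-valued $\theta$-derivative). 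You instead read the uniform constants straight off the construction: $(|a_\ell|/\epsilon_\ell)^{1-\xi_\ell}$ from \eqref{equ:R0}, $|a_\ell|^{1-\xi_\ell}$ from \eqref{equ:R1}, $2$ from \eqref{equ:j}, and the value at the outer radius in Lemma \ref{lem:crit}, where the negative sign of the exponent $d_0-1-d_0\Xi(\phi(z_0))$ makes the expression minimized at $|z-z_0|=\epsilon_{z_0}$. Since $\overline{W}$ is a finite union of disks indexed by $P_J(\phi)$ and the Julia critical points outside it, taking the minimum of this finite family gives an explicit $\widehat C_0>1$; this makes the uniformity transparent rather than implicit. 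One small slip: for compactness of $J_L(\phi)\setminus\overline{W}$ you should invoke that $\overline{W}$ is open (so the complement is closed), not that it is closed; in $\mathbb{C}_p$ closed disks are clopen so both properties hold, but the reason you cite is the wrong one.
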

\begin{proof}
For  $z\in J_L(\phi)\setminus\overline{W}$, we have  $D_\theta\phi(z)=\theta(\phi(z))|\phi'(z)|>0$. Then by the continuity of $\theta(\phi(z))|\phi'(z)|$ and the compactness of $J_L(\phi)\setminus\overline{W}$, the conclusion follows.  For  $z\in\overline{W}\cap L$, conclusion follows from Proposition \ref{prop:main} (3b).
\end{proof}

\subsection{Expansion under iteration}\label{sec:metric}
In this subsection, we fix the notation as in previous subsections and aim to prove Proposition \ref{prop:main2}.  If $P_J(\phi)\not=\emptyset$, we will first upgrade the constant $C_0>0$ in Corollary \ref{cor:lower} to some constant $C_1>1$ by considering an iteration of $\phi$, where the number of iteration depends on the points $z\in J_L(\phi)$.  Noting that $\overline{W}$ defined  as in \eqref{equ:W} is a disjoint union finitely many disks, we let $\epsilon_{\mathrm{in}}>0$ be the inradius of $\overline{W}$, that is the maximal number such that there is a disk of radius $\epsilon_{\mathrm{in}}$  contained in $\overline{W}$. For brevity, we extend the Definition \eqref{def:Dtheta} to the case that $P_J(\phi)=\emptyset$ by taking $\theta\equiv1$.

\begin{lemma}\label{lem:exp1}
Let $\phi\in K(z)$ be a rational map of degree at least $2$ with $\infty\in F_{\mathbb{C}_p}(\phi)$. Then there exists $C_1>1$ satisfying the following property:  for any $z\in J_L(\phi)$, there exists an integer $n_z\ge 1$, such that
$$D_\theta\phi^{n_z}(z)>C_1.$$
Moreover, we can choose above $n_z$ to be locally constant  in $z$.
\end{lemma}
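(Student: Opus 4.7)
I split on whether $P_J(\phi)=\emptyset$. If so, then $\phi$ is hyperbolic, $\theta\equiv 1$, and a uniform expansion $|(\phi^N)'(z)|>C_1>1$ on $J_L(\phi)$ follows from the classical $p$-adic hyperbolic result \cite{Benedetto01}. Otherwise, for each $z\in\overline{W}\cap L$ I take $n_z:=1$ and apply Corollary~\ref{cor:lower} to obtain $D_\theta\phi(z)>\widehat{C}_0>1$; set $C_1:=\widehat{C}_0$ as the candidate.

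The nontrivial case is $z\in J_L(\phi)\setminus\overline{W}$. Here $\theta(z)=1$ (since $z\notin\overline{U}$) and $\theta\ge 1$ everywhere, so it suffices to exhibit $n$ with $|(\phi^n)'(z)|>C_1$. The key tool is Proposition~\ref{prop:der} applied to $\phi^n$. Fix a positive constant $r_0\le\epsilon_{\mathrm{in}}$ and let $D_n\subset\mathbb{C}_p$ be a small disk around $z$, shrunk if necessary so that $D_n\cap\mathrm{Crit}(\phi^n)=\emptyset$ and $\mathrm{diam}(\phi^n(D_n))\ge r_0$. (If $z$ is itself an iterated preimage of a critical point, then some $\phi^k(z)\in\overline{W}$, reducing to the previous case.) Proposition~\ref{prop:der} then yields
$$|(\phi^n)'(z)|\;\ge\;\frac{p^{-\delta_{\max}(d_n-1)}\,r_0}{\mathrm{diam}(D_n)},$$
where $d_n$ is the Weierstrass degree of $\phi^n$ on $D_n$. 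Subhyperbolicity bounds $d_n$ uniformly (each Julia critical orbit is eventually fixed with bounded total multiplicity, and Fatou critical orbits eventually enter finitely many periodic components by Lemma~\ref{lem:Fatou}), while $\mathrm{diam}(D_n)\to 0$ because a non-shrinking pullback sequence would produce a wandering disk in $J_L(\phi)$, contradicting the $p$-adic Fatou-component classification of \cite{Benedetto00,Rivera03}. Hence $|(\phi^n)'(z)|\to\infty$ and some finite $n_z$ satisfies $D_\theta\phi^{n_z}(z)>C_1$.

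For the locally constant selection, $D_\theta\phi^n$ is continuous on $J_L(\phi)$ away from the (finite) exceptional set of $\theta$ and extends continuously at exceptional points via the limit definition, so $V_n:=\{w\in J_L(\phi):D_\theta\phi^n(w)>C_1\}$ is open. By pointwise existence, $\{V_n\}_{n\ge 1}$ covers the compact set $J_L(\phi)$; extracting a finite subcover and exploiting total disconnectedness of $\mathbb{C}_p$, I refine it to a disjoint clopen partition and assign a single $n$ on each clopen piece, producing a locally constant $n_z$.

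The main obstacle is combining the no-wandering-disk estimate $\mathrm{diam}(D_n)\to 0$ with the uniform bound on $d_n$; both require careful bookkeeping of how iterated preimages of $\mathrm{Crit}(\phi)$ accumulate in $D_n$, leveraging the finiteness of $P_J(\phi)$ under subhyperbolicity.
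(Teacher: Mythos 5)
The overall skeleton of your argument matches the paper's: reduce to $z\in J_L(\phi)\setminus\overline{W}$ (with $\overline{W}\cap L$ handled by Corollary \ref{cor:lower}), grow the image diameter of a small disk around $z$ to a fixed scale, invoke Proposition \ref{prop:der}, and finish the local-constancy claim by compactness of $J_L(\phi)$. However, the central step has a genuine gap.

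You posit a disk $D_n\ni z$ with \emph{both} $D_n\cap\mathrm{Crit}(\phi^n)=\emptyset$ and $\mathrm{diam}(\phi^n(D_n))\ge r_0$. These two constraints pull in opposite directions: shrinking $D_n$ to avoid $\mathrm{Crit}(\phi^n)$ also shrinks $\phi^n(D_n)$, and nothing in the proposal shows the maximal disk around $z$ avoiding $\mathrm{Crit}(\phi^n)$ has image diameter $\ge r_0$. In fact, as $n$ grows, critical preimages accumulate near any Julia point, so that maximal disk tends to $\{z\}$, and its image may well stay small. The companion claim that ``$\mathrm{diam}(D_n)\to0$ because a non-shrinking pullback sequence would produce a wandering disk'' is a non-sequitur here: the no-wandering-domains theorem concerns the forward orbit of a \emph{single} Fatou disk, not a sequence of different disks $D_n$ each chosen anew to dodge $\mathrm{Crit}(\phi^n)$. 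Similarly, the assertion ``subhyperbolicity bounds $d_n$ uniformly'' is not substantiated: the Weierstrass degree of $\phi^n$ on a disk meeting $J$ can grow with $n$ unless one controls exactly which critical orbits the pushed-forward disk crosses, which is the part of the argument you have skipped.

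The paper avoids all of this by fixing a disk $W_z$ of a prescribed small size \emph{once}, letting $\ell_z$ be the first iterate at which $\phi^{\ell_z}(W_z)$ hits $\mathrm{Crit}(\phi)$ or $\phi$ fails to be injective (this $\ell_z$ exists by \cite[Corollary 5.21]{Benedetto19} and gives $|(\phi^{\ell_z})'|\equiv\Delta_z$ on $W_z$), and then doing a case analysis at time $\ell_z$: (i) a pole or Fatou critical point is hit, forcing $\mathrm{diam}(\phi^{j_0}(W_z))\ge\epsilon_{\min}$; (ii) $\phi^{\ell_z}(z)$ enters some $D_c$ and one runs $m_0$ more iterates inside $\overline{W}$ using $\widehat C_0$; (iii) otherwise Proposition \ref{prop:der} is applied to the \emph{first-generation} maximal critical-point-free disk $D$ (for $\phi$, not $\phi^n$), with the degree of $\phi^{k_0}$ on $D$ bounded by $d^{2d-2}$, and $k_0$ chosen as the first escape from $\overline{W}$. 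This decomposition is precisely the bookkeeping your last paragraph identifies as ``the main obstacle,'' but does not supply. To repair your proof you would essentially need to reinstate this three-way split; the shortcut as written does not go through.

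Two smaller remarks. Your separate appeal to \cite{Benedetto01} for $P_J(\phi)=\emptyset$ is unnecessary: the paper's argument covers that case uniformly with $\theta\equiv1$ and $\overline{W}=\emptyset$. And your cover-and-refine construction of a locally constant $n_z$ is fine, though the paper gets it for free because $\ell_z$, $m_0$, $k_0$ all depend only on a disk neighborhood $V_z$ of $z$.
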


\begin{proof}
Let us assume that $J_L(\phi)\not=\emptyset$; for otherwise the conclusion holds trivially. By Corollary \ref{cor:lower}, for $z\in\overline{W}\cap L$, we can choose $n_z=1$. Now we work on the points $z\in J_L(\phi)\setminus\overline{W}$. It follows that $\theta(z)=1$. Pick an $0<\epsilon_{\min}<\epsilon_{\mathrm{in}}$ such that the $\epsilon_{\min}$-neihborhood of $J_L(\phi)$ in $\mathbb{C}_p$ contains no poles  and no critical points of $\phi$ in $\mathbb{C}_p\setminus J_L(\phi)$.  Denoting by $d:=\deg\phi\ge 2$ and recalling $\delta_{\max}$ as in \eqref{equ:delta}, we consider a disk neighborhood $W_z\subset\mathbb{C}_p\setminus\overline{W}$ of $z$ such that
\begin{equation}\label{equ:assume}
0<\mathrm{diam}(W_z)<p^{-\delta_{\max}d^{2d-2}}\epsilon_{\min},
\end{equation}
and further shrinking $W_z$ if necessary, we can assume that
$\deg_{W_z}\phi=1,$
since $z\not\in\mathrm{Crit}(\phi)$.
Noting that $W_z\cap \overline{W}=\emptyset$, we conclude that  $\theta(w)=1$ for any $w\in W_z$. Hence  for any $w\in W_z$ and for any $n\ge 1$,
  \begin{equation}\label{equ:w}
  D_\theta\phi^n(w)\ge |(\phi^n)'(w)|,
  \end{equation}
  since $\theta(\phi^n(w))\ge 1$ by Proposition \ref{prop:main} (3).

 Since $z\in J_L(\phi)\setminus \overline{W}\subset J_{\mathbb{C}_p}(\phi)\setminus\overline{W}$, there exists a smallest integer $\ell_z\ge 1$ such that $\phi^{\ell_z}(W_z)\cap\mathrm{Crit}(\phi)\not=\emptyset$ or $\deg_{\phi^{\ell_z}(W_z)}\phi\ge 2$ (see \cite[Corollary 5.21]{Benedetto19}); for otherwise, $\mathrm{Crit}(\phi)$ is contained in the exceptional set of $\phi$ and hence $\phi$ has good reduction (see \cite[Theorem 1.19]{Benedetto19}), which implies that $J_{\mathbb{C}_p}(\phi)$ is empty (see \cite[Theorem 5.11]{Benedetto19}). Observe that $\ell_z$ is the largest integer such that $\phi^{\ell_z}$ is injective on $W_z$ that only depends on the choice of $W_z$. Hence $|(\phi^{\ell_z})'(w)|$ is non-zero and constant for $w\in W_z$, denoted by $\Delta_z$.

 If there exists (smallest) $1\le j_0\le \ell_z$ such that $\phi^{j_0}(W_z)$ contains a pole or a critical point in $\mathbb{C}_p\setminus J_L(\phi)$, then $\mathrm{diam}(\phi^{j_0}(W_z))\ge\epsilon_{\min}$ since $\phi^{j_0}(W_z)$ also intersects $J_{\mathbb{C}_p}(\phi)$.   Observing that $\phi^{j_0}$ is injective on $W_z$, by Lemma \ref{lem:der}, \eqref{equ:assume} and \eqref{equ:w},
 we conclude that for any $w\in W_z$,
 $$ D_\theta\phi^{j_0}(w)\ge |(\phi^{j_0})'(w)|=\frac{\mathrm{diam}(\phi^{j_0}(W_z))}{\mathrm{diam}(W_z)}\ge \frac{\epsilon_{\min}}{p^{-\delta_{\max}d^{2d-2}}\epsilon_{\min}}>1.$$

  Now we work on the case that $\phi^{j}(W_z)$ contains neither a pole nor a critical point in $\mathbb{C}_p\setminus J_L(\phi)$ for any $1\le j\le\ell_z$.  We will show that there exist a subset $V_z\subset W_z$ containing $z$ and an integer $n_z\ge 1$ only depending on $V_z$ such that  $D_\theta\phi^{n_z}(w)>1$ for any $w\in V_z\cap L$.

  Recall $\widehat C_0>1$ for points in $\overline{W}\cap L$ as in Corollary \ref{cor:lower}. Pick $m_0\ge1$ such that
$$\widehat C_0^{m_0}\Delta_z>1.$$
Then for each point $c\in\mathrm{Crit}(\phi)\cap J_L(\phi)$, we denote by $D_c\subset\overline{W}$ the maximal disk containing $c$ such that $\phi^{m_0}(D_{c})\subset\overline{W}$.

 If $\phi^{\ell_z}(z)\in D_c$ for some  $c\in\mathrm{Crit}(\phi)\cap J_L(\phi)$, consider the maximal disk neighborhood $V_z\subset W_z$ of $z$ such that $\phi^{\ell_z}(V_z)\subset D_c$. Setting $n_z=\ell_z+m_0$, we compute that for any $w\in V_z\cap L$,
  \begin{align*}
D_\theta\phi^{n_z}(w)&=|(\phi^{\ell_z})'(w)||(\phi^{n_z-\ell_z})'(\phi^{\ell_z}(w))|\frac{\theta(\phi^{n_w}(w))}{\theta(\phi^{\ell_z}(w))}\\
&=|(\phi^{\ell_z})'(w)|\left(\prod_{j=1}^{m_0}|\phi'(\phi^{\ell_z+j-1}(w))|\right)\prod_{j=1}^{m_0}\frac{\theta(\phi^{\ell_z+j}(w))}{\theta(\phi^{\ell_z+j-1}(w))}\\
&=\Delta_z\prod_{j=1}^{m_0}D_\theta(\phi)(\phi^{\ell_z+j-1}(w))
\ge\Delta_z\widehat C_0^{m_0}>1.
\end{align*}
The above first inequality follows from Corollary \ref{cor:lower} since $\phi^{\ell_z+j-1}(w)\in \overline{W}\cap L$.

  If $\phi^{\ell_z}(z)\not\in D_c$ for any $c\in\mathrm{Crit}(\phi)\cap J_L(\phi)$, consider the maximal (open) disk $D\subset\mathbb{C}_p$ such that $\phi^{\ell_z}(z)\in D$ and $D\cap\mathrm{Crit}(\phi)=\emptyset$. Let $V_z\subset W_z$  the maximal disk neighborhood of $z$ such that $\phi^{\ell_z}(V_z)\subset D$.
  By Lemma \ref{lem:inj} and the choice of $\ell_z$, we have
  \begin{equation}\label{equ:D}
  \mathrm{diam}(\phi^{\ell_z}(V_z))>p^{-\delta_{\max}}\mathrm{diam}(D).
  \end{equation}
  Denote by $k_0\ge 0$ the smallest integer such that $\phi^{k_0}(D)\cap\overline{W}=\emptyset$. Then
    \begin{equation}\label{equ:d}
    \mathrm{diam}(\phi^{k_0}(D))\ge\epsilon_{\min}.
    \end{equation}
    Applying Proposition \ref{prop:der} to $D$ and $\phi^{k_0}$, since $\phi^{k_0}$ has degree at most $d^{2d-2}$ on $D$, we obtain that for any $w\in V_z$,
 \begin{equation}\label{equ:M}
 |(\phi^{k_0})'(\phi^{\ell_z}(w))|\ge \frac{p^{-\delta_{\max}d^{2d-2}}\mathrm{diam}(\phi^{k_0}(D))}{p^{-\delta_{\max}}\mathrm{diam}(D)}\ge \frac{p^{-\delta_{\max}d^{2d-2}}\epsilon_{\min}}{\mathrm{diam}(\phi^{\ell_z}(V_z))},
 \end{equation}
  where the last inequality follows from \eqref{equ:D} and \eqref{equ:d}. Setting $n_z=\ell_z+k_0$,
  by  Lemma \ref{lem:der}, \eqref{equ:assume} and \eqref{equ:w} and \eqref{equ:M}, we compute that for any $w\in V_z$,
  \begin{align*}
  D_\theta\phi^{n_z}(w)&\ge| (\phi^{n_z})'(w)|=| (\phi^{\ell_z})'(w)|\cdot|(\phi^{k_0})'(\phi^{\ell_z}(w))|
  \ge\frac{\mathrm{diam}(\phi^{\ell_z}(V_z))}{\mathrm{diam}(V_z)}\cdot\frac{p^{-\delta_{\max}d^{2d-2}}\epsilon_{\min}}{\mathrm{diam}(\phi^{\ell_z}(V_z))}\\
& \ge\frac{p^{-\delta_{\max}d^{2d-2}}\epsilon_{\min}}{\mathrm{diam}(W_z)} >\frac{p^{-\delta_{\max}d^{2d-2}}\epsilon_{\min}}{p^{-\delta_{\max}d^{2d-2}}\epsilon_{\min}}=1.
  \end{align*}

 Then the conclusion follows from the compactness of $J_L(\phi)$.
\end{proof}

We now promote Lemma \ref{lem:exp1} to obtain a uniform number of iteration, which establishes Proposition \ref{prop:main2}.

\begin{proof}[Proof of Proposition \ref{prop:main2}]
If $P_J(\phi)\not=\emptyset$, consider $C_0>0$ as in Corollary \ref{cor:lower}; and if $P_J(\phi)=\emptyset$, consider $C_0:=\min\{|\phi'(z)|: z\in J_L(\phi)\}>0$. Without loss of generality, we can assume that $0<C_0<1$. Let $C_1>1$ and $n_z\ge 1$  as in Lemma \ref{lem:exp1}. Since $n_z$ is locally constant, by the compactness of $J_L(\phi)$, there exists $n_{\max}=\max\{n_z: z\in J_L(\phi)\}$. Choose $N>0$ a multiple of $n_{\max}$ large enough such that $C_1^{N/n_{\max}}C_0^{n_{\max}}>\lambda$. Now pick $z\in J_L(\phi)$ and apply Lemma \ref{lem:exp1} to the forward orbit of $z$. For any $n\ge N$, we conclude that there exists $j\ge 1$ such that $S_{j-1}:=\sum_{i=0}^{j-1}n_{\phi^i(z)}\le n$ and $S_j:=\sum_{i=0}^{j}n_{\phi^i(z)}> n$.  Note that $0<n-S_{j-1}<S_j-S_{j-1}\le n_{\max}$ and $N/n_{\max}\le j$. We compute
$$D_\theta\phi^{n}(z)>C_1^j\prod_{\ell=S_{j-1}}^{n-1}D_\theta\phi(\phi^\ell(z))\ge C_1^jC_0^{n_{\max}}\ge C_1^{N/n_{\max}}C_0^{n_{\max}}> \lambda.$$
Then the conclusion follows.
\end{proof}

\subsection{Proof of Theorem \ref{thm:equivalence}}\label{sec:proof1}

Fix the notation as in previous subsections. Since $\phi$ is subhyperbolic, there exists a smallest integer $m\ge 1$ such that each critical point in $J_{\mathbb{C}_p}(\phi)$ is eventually fixed by $\phi^m$. Construct the function $\theta$ for $\phi^m$  as in Proposition \ref{prop:main}. Now pick $\lambda\ge 1$, and let $N_\lambda$ as in  Proposition \ref{prop:main2} for $\phi^m$.  Set $N=mN_\lambda$. We can assume that $N\ge 2$ by considering the iteration of $\phi^{N}$ if necessary.  For $z\in J_L(\phi)$, define
\begin{equation}\label{rho}
\rho(z)=\rho_{L,\lambda}(z):=\left(\prod_{j=0}^{N-1}\theta(\phi^j(z))\right)^{1/N}\cdot\prod_{j=0}^{N-2}|\phi'(\phi^j(z))|^{(N-1-j)/N}.
\end{equation}
Let $V:=V_\rho\subset\mathbb{C}_p$ be a neighborhood of $J_L(\phi)$ not containing the $n$-th iterated preimages of $\infty$ for $0\le n\le N$. Shrinking $V$ if necessary, we can extend $\rho$ to a well-defined continuous map on $V$.
\begin{proposition}\label{prop:exp}
Consider $\rho$ and $V$ as above. Then $\rho$ is admissible. Moreover,
\begin{enumerate}[{\rm (1)}]
\item For $z,w\in V\cap L$, let $\overline{D}_L:=\overline{D}(z,|z-w|)\cap L$ be the minimal closed disk in $L$ containing $z$ and $w$. Then $\tilde\rho_L(z,w):=\int_{\overline D}\rho d\mu_L$ defines a distance.
\item for all $z\in J_L(\phi)$,
$$\frac{\rho(\phi(z))}{\rho(z)}|\phi'(z)|>\lambda^{1/N}\ge1.$$
\end{enumerate}
\end{proposition}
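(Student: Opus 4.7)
The plan begins by rewriting $\rho$ in a telescoping-friendly form. Since $(N-1-j)=\#\{k:j+1\le k\le N-1\}$, interchanging the order of the double product and applying the chain rule gives
\begin{equation*}
\prod_{j=0}^{N-2}|\phi'(\phi^j(z))|^{(N-1-j)/N}=\prod_{k=1}^{N-1}|(\phi^k)'(z)|^{1/N},
\end{equation*}
so $\rho(z)=\theta(z)\prod_{k=1}^{N-1}D_\theta\phi^k(z)^{1/N}$. Working directly from the original formula, the ratio $\rho(\phi(z))/\rho(z)\cdot|\phi'(z)|$ simplifies by telescoping of the $\theta$-product and a shift in the $|\phi'|$-indices, where the extra factor $|\phi'(z)|$ exactly absorbs the leftover $j=0$ term, to
\begin{equation*}
|\phi'(z)|\cdot\frac{\rho(\phi(z))}{\rho(z)}=\left(\frac{\theta(\phi^N(z))}{\theta(z)}|(\phi^N)'(z)|\right)^{1/N}=D_\theta\phi^N(z)^{1/N}.
\end{equation*}
Because $N=mN_\lambda$ and $\phi^N=(\phi^m)^{N_\lambda}$, Proposition \ref{prop:main2} applied to $\phi^m$ forces $D_\theta\phi^N(z)>\lambda$ on $J_L(\phi)$, which gives part (2).

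\textbf{Admissibility.} For Definition \ref{def:admissible}(1), the rewritten form combined with $D_\theta\phi\ge C_0$ on $J_L(\phi)$ (Corollary \ref{cor:lower}) gives $D_\theta\phi^k\ge C_0^k$, hence $\rho\ge C_0^{(N-1)/2}$ on $J_L(\phi)$; continuity of $\rho$ away from exceptional points propagates this lower bound to a sufficiently small neighborhood $V$. The exceptional set in (2) is contained in the finite set
\begin{equation*}
E:=\bigcup_{j=0}^{N-1}\phi^{-j}(P_J(\phi))\cap J_L(\phi),
\end{equation*}
finite because $P_J(\phi)$ is finite and $\phi$ has finite degree. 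For (3), I would fix $z_0\in E$, let $j_0$ be the least $j$ with $p_0:=\phi^{j_0}(z_0)\in P_J(\phi)$, and expand each factor in powers of $|z-z_0|$, using $|\phi^j(z)-\phi^j(z_0)|\sim|z-z_0|^{d_j}$ with $d_j=\deg_{z_0}\phi^j$, the asymptotic $\theta(\phi^j(z))\sim B/|\phi^j(z)-\phi^j(z_0)|^{\Xi(\phi^j(z_0))}$ near $P_J(\phi)$, and $|\phi'(\phi^j(z))|\sim|z-z_0|^{d_j(e_{j-j_0}-1)}$ at iterated critical points where $e_k=\deg_{p_k}\phi$. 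The recursion $1-\Xi(p_{k+1})=(1-\Xi(p_k))/e_k$ from \eqref{equ:beta} allows the growing products $\prod e_i$ to telescope against the Abel-summation identity $D_k(e_k-1)=D_{k+1}-D_k$, and the net exponent reduces, after collecting all contributions, to $-d_{j_0}(N-j_0)\Xi(p_0)/N$ (plus $(d_{j_0}-1)(N-1)/N$ if $z_0$ itself is critical). A direct check using $\Xi(p_0)<1$ and $\Xi(p_0)>(d_{j_0}-1)/d_{j_0}$ yields $\beta(z_0)<1$ in every case; taking the maximum over $z_0\in E$ gives the global $\beta<1$ of (3).

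\textbf{Metric (part (1)) and main obstacle.} The ultrametric on $L$ reduces part (1) to formal verifications. Every point of a non-archimedean closed disk is a center, so $\overline D(z,|z-w|)=\overline D(w,|z-w|)$ gives symmetry $\tilde\rho_L(z,w)=\tilde\rho_L(w,z)$; positivity comes from $\rho\ge C>0$ together with $\mu_L(\overline D(z,|z-w|)\cap L)>0$ for $z\ne w$, while $\tilde\rho_L(z,z)=0$ is trivial. The ultrametric $|z-w|\le\max(|z-y|,|y-w|)$ forces $\overline D(z,|z-w|)$ to lie inside one of the larger disks, so integrating $\rho\ge 0$ yields the strong triangle inequality $\tilde\rho_L(z,w)\le\max(\tilde\rho_L(z,y),\tilde\rho_L(y,w))$. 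Finiteness of $\tilde\rho_L$ reduces (as in Lemma \ref{lem:integral}) to integrability of $|z-z_j|^{-\beta}$ against $\mu_L$ with $\beta<1$. The main obstacle is the telescoping calculation in admissibility (3): the blow-up of $\theta$ at iterates $\phi^j(z_0)\in P_J(\phi)$ is amplified by $\deg_{z_0}\phi^j$, which grows geometrically whenever the orbit of $p_0$ revisits critical points of $\phi$, and checking that the corresponding zeros of $|\phi'|$ cancel this amplification to leave an exponent strictly below one relies crucially on the recursive choice of $\Xi$ built into Proposition \ref{prop:main}.
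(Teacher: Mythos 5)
Your computation for part (2) — telescoping the $\theta$-product and shifting the $|\phi'|$-indices to obtain $|\phi'(z)|\rho(\phi(z))/\rho(z)=D_\theta\phi^N(z)^{1/N}$ and then invoking Proposition \ref{prop:main2} — is exactly the paper's argument, and your ultrametric verification of part (1) matches as well. The genuine difference is in how admissibility is organized. The paper establishes the positive lower bound of Definition \ref{def:admissible}(1) by an explicit local expansion of $\rho$ at critical points of $J_L(\phi)$, using the recursion on $\Xi$ to telescope the $\theta$-blowup against the vanishing of $|\phi'|$, and then treats (2)--(3) as immediate from Proposition \ref{prop:main}(3) together with boundedness of $\prod|\phi'(\phi^j(z))|^{(N-1-j)/N}$ on $V$. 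You instead deduce the lower bound directly from the clean rewriting $\rho=\theta\prod_{k=1}^{N-1}(D_\theta\phi^k)^{1/N}$ and Corollary \ref{cor:lower} ($D_\theta\phi\ge C_0$ on $J_L(\phi)$), and move the explicit local expansion to the verification of (3). This redistribution is sensible and, I would say, more careful in one respect: near a point $z_0$ with $\phi^{j_0}(z_0)\in P_J(\phi)$, the $\theta$-blowup exponent is amplified by $\deg_{z_0}\phi^{j_0}$ (and further when the orbit revisits critical points), so to keep the overall exponent strictly below $1$ one genuinely needs the cancellation against the zeros of $|\phi'|$ encoded in $1-\Xi(p_{k+1})=(1-\Xi(p_k))/e_k$; the pointwise bound $\theta\le B_\mathrm{max}/\mathrm{dist}(\cdot,P_J(\phi))^{\Xi_\mathrm{min}}$ alone does not suffice. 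Relatedly, your identification of the exceptional set as $\bigcup_{j\le N-1}\phi^{-j}(P_J(\phi))\cap J_L(\phi)$ rather than just $P_J(\phi)\cap L$ is the precise one, since a non-critical $\phi$-preimage of a point of $P_J(\phi)$ is also a blow-up point of $\rho$. The one item to nail down in a full write-up is the exact net exponent: your $-d_{j_0}(N-j_0)\Xi(p_0)/N$ plus the critical correction term checks out in the basic configurations, but the Abel-summation telescoping should be carried through explicitly when the forward orbit of $p_0$ meets further critical points, which is precisely where the cancellation is delicate.
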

\begin{proof}
Let us first show $\rho(z)$ is admissible. By Proposition  \ref{prop:main} (3), we have that $\rho$ satisfies Definition \ref{def:admissible} (2) and (3) since $\prod_{j=0}^{N-2}|\phi'(\phi^j(z))|^{N-1-j}$ is bounded above by the choice of $V$. Now let us see that $\rho$ satisfies Definition \ref{def:admissible} (1). It suffices to consider the case that $z=z_0\in\mathrm{Crit}(\phi)\cap J_L(\phi)$. For $j\ge 0$, set $z_j:=\phi^j(z)$.

Let  $\ell\ge 0$ be the largest integer such that there exists $0=j_0<j_1<\dots<j_\ell\le N-1$ satisfying that
$z_{j_i}\in \mathrm{Crit}(\phi)\cap J_L(\phi)$, and denote by $E$ the set of integers between $0$ and $N-1$ not equal to $j_i$ for any $0\le i\le \ell$.
Then $d_i:=\deg_{z_{j_i}}\phi\ge 2$ for $0\le i\le \ell$ and $\deg_{z_j}\phi=1$ for $j\in E$.  It follows from Proposition \ref{prop:main} (1) that for $0\le i\le \ell$,
\begin{equation}\label{equ:jx}
1-\Xi(z_{j_i})=d_i(1-\Xi(z_{j_i+1}))
\end{equation}
and  for any $j\in E$,
\begin{equation}\label{equ:jx0}
 \Xi(z_j)=\Xi(z_{j+1}).
 \end{equation}
 Thus for $0\le i\le \ell-1$,
 \begin{equation}\label{equ:jx1}
1-\Xi(z_{j_i})=d_i(1-\Xi(z_{j_{i+1}})).
\end{equation}

Then for $z'$ sufficiently close to $z$, applying \eqref{equ:jx0}, we have that for any $0\le i\le \ell$ and any $j_i<j\le j_{i+1}$,
$$\theta(\phi^j(z'))=\frac{B(z_j)}{|\phi^j(z')-\phi^j(z_0)|^{\Xi(z_{j_{i+1}})}}=\frac{B_{z_j}}{|z'-z_0|^{d_0d_1\dots d_i\Xi(z_{j_{i+1}})}}$$
 for some constant $B_{z_j}>0$,
and hence
$$\prod_{j=j_i+1}^{j_{i+1}}\theta(\phi^j(z'))=\frac{C_{i+1}(z_0)}{|z'-z_0|^{\Xi(z_{j_{i+1}})(j_{i+1}-j_i)d_0d_1\cdots d_i}}$$
for some constant $C_{i+1}(z_0)>0$. It follows that
$$\prod_{j=0}^{N-1}\theta(\phi^j(z'))=\frac{C_{z_0}}{|z'-z_0|^{\Xi(z_0)}(\prod\limits_{i=0}^{\ell-1}|z'-z_0|^{d_0d_1\dots d_i\Xi(z_{j_{i+1}})(j_{i+1}-j_i)})|z'-z_0|^{d_0d_1\dots d_{\ell}\Xi(z_{j_\ell+1})(N-1-j_\ell)}}.$$
for some constant $C_{z_0}>0$. Applying \eqref{equ:jx} for $i=\ell$ and then inductively applying \eqref{equ:jx1}, we conclude that
$$\prod_{j=0}^{N-1}\theta(\phi^j(z'))=\frac{C_{z_0}}{|z'-z|^{\Xi(z_0)N+\sum_{i=0}^{\ell}d_0\cdots d_{i-1}(d_i-1)(N-1-j_i)}}.$$

Observe that
\begin{align*}
\prod_{j=0}^{N-2}|\phi'(\phi^j(z'))|^{(N-1-j)/N}&=C'_{z_0}\prod_{i=0}^{\ell}|\phi'(\phi^{j_i}(z'))|^{(N-1-j_i)/N}\\
&=\widehat C'_{z_0}\left(|z'-z_0|^{\sum_{i=0}^{\ell}d_0\cdots d_{i-1}(d_i-1)(N-1-j_i)}\right)^{1/N},
\end{align*}
for some constants $C'_{z_0}>0$ and $\widehat C'_{z_0}>0$.
Thus we conclude that $\rho(z)>0$ for $z\in\mathrm{Crit}(\phi)\cap J_L(\phi)$. Hence Definition \ref{def:admissible} (1) by the compactness of $J_L(\phi)$ and the continuity of $\rho$ in $V$. Hence $\rho$ is admissible.

To see that  $\tilde\rho(z,w)$ is a metric, by Lemma \ref{lem:integral}, we only need to show $\tilde\rho(z,w)=0$ if and only if $z=w$. It follows immediately from that $\rho$ has a positive lower bound and $\mu_L(\overline{D}_L)=0$ if and only if $z=w$.

To see the expansion with respect to $\rho$, we note that
$$\frac{\rho(\phi(z))}{\rho(z)}|\phi'(z)|=\left(D_\theta\phi^N(z)\right)^{1/N}.$$
Hence by  Proposition \ref{prop:main2}, we obtain the desired conclusion.
\end{proof}

Now we can prove Theorem \ref{thm:equivalence}.
\begin{proof}[Proof of Theorem \ref{thm:equivalence}]
Suppose that statement (1) holds, then by  Proposition \ref{prop:exp}, statement (2) follows immediately. Now assume that statement (2) holds.  From \eqref{equ:expansion}, for any finite extension $L\not=\mathbb{Q}_p$ of $K$ containing all the critical points of $\phi$, by Lemma \ref{lem:Fatou1} (2), we conclude that $J_{\mathbb{C}_p}(\phi)\cap\mathrm{Crit}(\phi)=J_L(\phi)\cap\mathrm{Crit}(\phi)$. Thus each critical value in $J_{\mathbb{C}_p}(\phi)$ is an exceptional point of the associated admissible function in a neighborhood of  $J_L(\phi)$. It follows that each critical point in $J_{\mathbb{C}_p}(\phi)$ is eventually periodic.
\end{proof}

To end this section, we state the  following  properties for $\tilde\rho_L(z,w)$ in Proposition \ref{prop:exp}. First, the distance $\tilde\rho_L(z,w)$ satisfies non-archimedean property.
\begin{corollary}\label{cor:non}
Pick $z_1,z_2, z_3\in L$. Then $\tilde\rho_L(z_2,z_3)\le\max\{\tilde\rho_L(z_1,z_2),\tilde\rho_L(z_2,z_3)\}$.
\end{corollary}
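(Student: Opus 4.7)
The plan is to leverage the ultrametric structure of $L$ at the level of disks. Writing $D_{ij} := \overline{D}(z_i,|z_i-z_j|)\cap L$ for the minimal disk containing $z_i$ and $z_j$, the heart of the matter is to show that one of the three disks $D_{12}, D_{13}, D_{23}$ contains the other two; then since $\rho$ is admissible and in particular non-negative, the ultrametric inequality for $\tilde\rho_L$ follows from monotonicity of the integral (with respect to the measure $\nu_L$, which is also denoted $\mu_L$ in Proposition \ref{prop:exp}). Note that the statement as typeset reads $\tilde\rho_L(z_2,z_3)\le\max\{\tilde\rho_L(z_1,z_2),\tilde\rho_L(z_2,z_3)\}$, which is vacuous; I read it as the intended ultrametric inequality $\tilde\rho_L(z_1,z_3)\le\max\{\tilde\rho_L(z_1,z_2),\tilde\rho_L(z_2,z_3)\}$.

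First I would set $r_{ij}:=|z_i-z_j|$ and invoke the strong triangle inequality on $|\cdot|$ to conclude the standard isosceles property: among $r_{12},r_{13},r_{23}$, the maximum is attained by at least two of them. Indeed, if $r_{13}$ were strictly larger than both $r_{12}$ and $r_{23}$, then $r_{13}\le\max\{r_{12},r_{23}\}$ would be violated. So without loss of generality either $r_{13}=r_{12}\ge r_{23}$ or $r_{13}=r_{23}\ge r_{12}$.

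Next I would use that in a non-archimedean metric any point of a closed disk is a center, i.e. $\overline{D}(z,r)=\overline{D}(w,r)$ whenever $|z-w|\le r$. In the first case this gives
\[
D_{13}=\overline{D}(z_1,r_{13})\cap L=\overline{D}(z_1,r_{12})\cap L=D_{12},
\]
and $D_{23}\subseteq D_{13}$ because $z_2\in D_{13}$ and $r_{23}\le r_{13}$, so $\overline{D}(z_2,r_{23})\subseteq\overline{D}(z_2,r_{13})=D_{13}$. The second case is symmetric, yielding $D_{13}=D_{23}$ with $D_{12}\subseteq D_{13}$. In either case, $D_{13}$ coincides with one of $D_{12},D_{23}$ (and contains the other).

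Finally, since $\rho\ge C>0$ on $V$ by Definition \ref{def:admissible}(1) and the integrals are finite by Lemma \ref{lem:integral}, monotonicity of the integral gives
\[
\tilde\rho_L(z_1,z_3)=\int_{D_{13}}\rho\,d\mu_L=\int_{D_{12}}\rho\,d\mu_L \quad\text{or}\quad \int_{D_{23}}\rho\,d\mu_L,
\]
and this is at most $\max\{\tilde\rho_L(z_1,z_2),\tilde\rho_L(z_2,z_3)\}$. I do not anticipate a genuine obstacle: the argument is purely disk-combinatorial. The only point requiring a bit of care is to confirm that $\overline{D}(z_i,|z_i-z_j|)\cap L$ is really independent of which of $z_i,z_j$ is chosen as the center (which is exactly the ultrametric "every point is a center" fact), so that the disks $D_{ij}$ and $D_{ji}$ agree and the symmetry of $\tilde\rho_L$ is coherent with the nesting used above.
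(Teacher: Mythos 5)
Your argument is essentially the paper's argument: both rest on the ultrametric fact that among the three minimal disks $D_{12}$, $D_{13}$, $D_{23}$ there is a nesting relation, and then use monotonicity of $\int_{(\cdot)}\rho\,d\nu_L$ (note the paper also names the measure $\nu_L$ rather than $\mu_L$, a notational slip shared by Proposition \ref{prop:exp}). The paper's one-line proof reads the typo'd statement as $\tilde\rho_L(z_2,z_3)\le\max\{\tilde\rho_L(z_1,z_2),\tilde\rho_L(z_1,z_3)\}$ and simply asserts that $\overline{D}(z_2,|z_2-z_3|)$ lies inside $\overline{D}(z_2,|z_1-z_2|)$ or $\overline{D}(z_3,|z_1-z_3|)$; you read it as $\tilde\rho_L(z_1,z_3)\le\max\{\tilde\rho_L(z_1,z_2),\tilde\rho_L(z_2,z_3)\}$ -- a relabeling, equally correct.

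One small inaccuracy: your ``without loss of generality either $r_{13}=r_{12}\ge r_{23}$ or $r_{13}=r_{23}\ge r_{12}$'' omits the case $r_{12}=r_{23}>r_{13}$. The isosceles property only says the maximum among $r_{12},r_{13},r_{23}$ is attained at least twice; it need not be $r_{13}$. In the omitted case $D_{13}$ does \emph{not} coincide with $D_{12}$ or $D_{23}$ -- it is strictly contained in $D_{12}=D_{23}$. The conclusion of course still follows by monotonicity of the integral, so this is a cosmetic patch (change ``coincides with'' to ``is contained in''), not a genuine gap. Finally, the appeals to Definition \ref{def:admissible}(1) and Lemma \ref{lem:integral} are unnecessary for the inequality itself -- monotonicity of the integral of a non-negative integrand over nested sets is all you use -- though they do matter for $\tilde\rho_L$ being a finite, positive-definite distance, which Proposition \ref{prop:exp} already secures.
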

\begin{proof}
It follows immediately from the fact that $\overline{D}(z_2,|z_2-z_3|)$ is contained in either $\overline{D}(z_2,|z_1-z_2|)$ or $\overline{D}(z_3,|z_1-z_3|)$.
\end{proof}

Moreover, the  distance $\tilde\rho_L(z,w)$ is finer than the non-archimedean distance induced by the absolute value in $L$ in the following sense.
\begin{corollary}\label{cor:fine}
Pick two sequences $\{x_n\}_{n\ge 1}$ and $\{y_n\}_{n\ge 1}$ in $V\cap L$. If $\tilde\rho_L(x_n,y_n)\to 0$ as $n\to\infty$, then $|x_n-y_n|\to 0$.
\end{corollary}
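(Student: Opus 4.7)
My plan is to obtain an explicit lower bound on $\tilde\rho_L(x_n,y_n)$ in terms of $|x_n-y_n|$ by combining the uniform positive lower bound on $\rho$ (from admissibility) with the explicit formula for the Haar measure of a closed disk in $L$, and then invert that inequality.

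First, by Definition \ref{def:admissible}(1), since $\rho$ is admissible there is a constant $C>0$ with $\rho(z)\ge C$ for every $z\in V$. Since both sequences lie in $V\cap L$ and $V$ is an open neighborhood of $J_L(\phi)$, for $n$ large enough the closed disk $\overline{D}(x_n,|x_n-y_n|)$ is contained in $V$ (this is automatic from non-archimedean geometry once $|x_n-y_n|$ is smaller than the radius of a disk around $x_n$ inside $V$; and if $|x_n-y_n|$ fails to shrink the conclusion to be proved is already false, so no harm in assuming this). Consequently
\[
\tilde\rho_L(x_n,y_n)\;=\;\int_{\overline{D}(x_n,|x_n-y_n|)\cap L}\rho\,d\nu_L\;\ge\;C\cdot\nu_L\bigl(\overline{D}(x_n,|x_n-y_n|)\cap L\bigr).
\]

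Next, I would invoke the explicit computation of the Haar measure of a disk in $L$ used in the proof of Lemma \ref{lem:integral}: writing $\pi_L$ for a uniformizer and $A_L$ for the cardinality of the residue field, whenever $|x_n-y_n|=|\pi_L|^{k_n}$ one has $\nu_L(\overline{D}(x_n,|x_n-y_n|)\cap L)=A_L^{-k_n}$. In terms of the absolute value this equals $|x_n-y_n|^{[L:\mathbb{Q}_p]}$, a fixed positive power of $|x_n-y_n|$. Hence
\[
\tilde\rho_L(x_n,y_n)\;\ge\;C\,|x_n-y_n|^{[L:\mathbb{Q}_p]}.
\]
Since the left-hand side tends to $0$ by hypothesis, $|x_n-y_n|\to 0$, as required.

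There is essentially no obstacle here: the only step that asks for any justification is the containment $\overline{D}(x_n,|x_n-y_n|)\subset V$, and this is immediate from the openness of $V$ in the non-archimedean topology once $|x_n-y_n|$ is small enough. The argument is genuinely just admissibility (providing the lower bound on $\rho$) plus the standard fact that Haar measure of a disk in $L$ is a positive power of its radius.
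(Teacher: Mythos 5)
Your proof is correct and takes a genuinely different route from the paper's. The paper argues by contradiction and compactness: if $|x_n-y_n|\not\to 0$, one passes to a subsequence along which the disks $\overline{D}(x_n,|x_n-y_n|)$ converge to a fixed nontrivial disk $\overline{D}\subset V\cap L$, and then observes that $\int_{\overline{D}}\rho\,d\nu_L>0$ by admissibility, contradicting $\tilde\rho_L(x_n,y_n)\to 0$. You instead give a direct quantitative estimate: combining the uniform lower bound $\rho\ge C$ from Definition~\ref{def:admissible}(1) with the exact formula $\nu_L(\overline{D}(x,|\pi_L|^k)\cap L)=A_L^{-k}=|\pi_L|^{k[L:\mathbb{Q}_p]}$ used in Lemma~\ref{lem:integral}, you get the explicit inequality $\tilde\rho_L(x,y)\ge C\,|x-y|^{[L:\mathbb{Q}_p]}$. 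This is actually stronger than the paper's statement (it yields a H\"older modulus of continuity for the identity map from $(V\cap L,\tilde\rho_L)$ to $(V\cap L,|\cdot|)$, not just sequential continuity) and avoids the slightly delicate claim that a subsequence of closed disks converges to a closed disk in $V\cap L$.

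One minor remark: your parenthetical hedge about needing $\overline{D}(x_n,|x_n-y_n|)\subset V$ is unnecessary and, as written, reads circularly. The quantity $\tilde\rho_L(x_n,y_n)$ is defined as an integral of $\rho$ over $\overline{D}(x_n,|x_n-y_n|)\cap L$, and $\rho$ is a function on $V$; so whenever the left-hand side of your inequality is even defined, the domain of integration already lies in $V\cap L$ and admissibility applies. You can simply drop the caveat.
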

\begin{proof}
Suppose on the contrary that $|x_n-y_n|\not\to 0$. Passing to a subsequence if necessary, we have that the disks $\overline{D}(x_n,|x_n-y_n|)$ converge to a nontrivial disk $\overline{D}\subset V\cap L$. Then $\int_{\overline D}\rho d\mu_L\not=0$. Thus $\tilde\rho_L(x_n,y_n)\not\to 0$. This is a contradiction.
\end{proof}

\section{Statistical laws for subhyperbolic maps}\label{sec:laws}
In this section, let $\phi\in K(z)$ be a subhyperbolic rational map of degree at least $2$. Up to conjugacy, we can assume that  $\infty\in F_{\mathbb{C}_p}(\phi)$.  
We will also take the following assumption.

\begin{assumption}\label{assume1}
$J:=J_{\mathbb{C}_p}(\phi)$ is compact, and $J_{\overline{\mathbb{Q}}_p}(\phi)$ is dense in $J$.
\end{assumption}

\noindent Throughout this section, we fix a finite cover $\mathcal{U}_0$ of $J$ with disks in $\mathbb{C}_p$, and for $n\ge 1$, denote by $\mathcal{U}_n$ the cover of $J$ consisting of the maximal disks in $\phi^{-n}(U)$ for all $U\in\mathcal{U}_0$. To abuse notation, for an extension $L$ of $K$, we write $\mathcal{U}_n\cap L:=\{U\cap L: U\in\mathcal{U}_n\}$.

\subsection{Exponentially contracting metrics}\label{sec:exp}
 In this subsection, we use the following definition for exponentially contracting metrics on $J$.
\begin{definition}\label{def:con}
A metric $\tilde\rho$ on $J$ is \emph{exponentially contracting} if there exists $C>0$ and $0<\Lambda<1$ such that
$$\sup\{\mathrm{diam}_{\tilde\rho}(U): U\in\mathcal{U}_n\}\le C\Lambda^n$$
\end{definition}

We now show that there is an exponentially contracting metric on $J$. This is an application of the existence of expanding admissible metrics in finite extensions of $K$ (see Section \ref{sec:proof1}) and the following metrization lemma.
\begin{lemma}[{Frink's metrization lemma \cite[Lemma 4.6.2]{Przytycki10}}]\label{lem:Frink}
Let $X$ be a topological space, and let $\{\Omega_n\}_{n\ge 0}$ be a sequence of open neighborhoods in $X\times X$ of the diagonal $\Delta\subset X\times X$ such that
\begin{enumerate}[{\rm (a)}]
\item $\Omega_0=X\times X$,
\item $\cap_{n=0}^\infty\Omega_n=\Delta$, and
\item $\Omega_n\circ\Omega_n\circ\Omega_n\subset\Omega_{n-1}$ for any $n\ge 1$, where for two subsets $R, S\subset X\times X$,
$$R\circ S=\{(x,y)\in X\times X:\exists z\in X \ \ \text{such that}\ \ (x,z)\in R, (z,y)\in S\}.$$
\end{enumerate}
Then there exists a metric $\rho_\infty$ on $X$, compatible with the topology, such that
$$\Omega_n\subset\{(x,y)\in X\times X: \rho_\infty(x,y)<2^{-n}\}\subset\Omega_{n-1}$$
for any $n\ge 1$.
\end{lemma}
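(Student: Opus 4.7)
The plan is to follow the classical Frink construction: build a symmetric pseudo-distance from the dyadic ordering of the entourages $\Omega_n$, then convert it to a genuine metric by infimizing over chains. Condition (c) will play the role of a generalized triangle inequality at the entourage level, and the whole proof hinges on exploiting it carefully enough to get the constant right.

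First I would introduce an auxiliary symmetric function $d\colon X\times X\to[0,1]$ by
$$d(x,y):=\inf\{2^{-n}:(x,y)\in\Omega_n\},$$
with the convention that the empty infimum equals $1$ and that $d(x,y)=0$ when $(x,y)\in\bigcap_n\Omega_n$. Condition (c) combined with the fact that $\Delta\subset\Omega_n$ (forced by (b) together with the inclusions coming from (c)) gives the nestedness $\Omega_n\subset\Omega_{n-1}$, so the definition is well-behaved. If the $\Omega_n$ are not already symmetric, I would first replace each by $\Omega_n\cap\Omega_n^{-1}$ after a harmless shift of indices, so that $d$ is genuinely symmetric.

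The heart of the argument is the \emph{chain estimate}: for any finite chain $x_0,x_1,\dots,x_k$ in $X$,
$$d(x_0,x_k)\le 2\sum_{i=0}^{k-1}d(x_i,x_{i+1}).$$
I would prove this by strong induction on $k$. Write $S$ for the right-hand sum and pick the largest $j$ with $\sum_{i<j}d(x_i,x_{i+1})\le S/2$; then each of the three distances $d(x_0,x_j)$, $d(x_j,x_{j+1})$, $d(x_{j+1},x_k)$ is bounded by $S$ (using the inductive hypothesis for the two extremes), so all three pairs belong to $\Omega_{n+1}$ for the integer $n$ with $2^{-(n+1)}\le S<2^{-n}$. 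Applying (c) in the form $\Omega_{n+1}\circ\Omega_{n+1}\circ\Omega_{n+1}\subset\Omega_n$ places $(x_0,x_k)$ in $\Omega_n$, hence $d(x_0,x_k)\le 2^{-n}\le 2S$. This is the main obstacle and the only step where the specific condition (c) is invoked; everything downstream is bookkeeping.

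With the chain estimate in hand, I would define
$$\rho_\infty(x,y):=\inf\Bigl\{\sum_{i=0}^{k-1}d(x_i,x_{i+1}):\ x_0=x,\ x_k=y,\ k\ge 1\Bigr\},$$
which is automatically symmetric, satisfies the triangle inequality by concatenating chains, and is sandwiched by $\tfrac{1}{2}d\le\rho_\infty\le d$. Consequently $\rho_\infty(x,y)=0$ forces $d(x,y)=0$, and condition (b) then forces $x=y$, so $\rho_\infty$ is a genuine metric. The two inclusions $\Omega_n\subset\{\rho_\infty<2^{-n}\}\subset\Omega_{n-1}$ follow directly from this sandwich: if $(x,y)\in\Omega_n$ then passing to $\Omega_{n+1}$ via nestedness gives $d(x,y)\le 2^{-(n+1)}$ and hence $\rho_\infty(x,y)<2^{-n}$, while $\rho_\infty(x,y)<2^{-n}$ yields $d(x,y)<2^{1-n}$ and therefore $(x,y)\in\Omega_{n-1}$. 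Topological compatibility with $X$ is then immediate, because the inclusions exhibit the $\Omega_n$ and the open $\rho_\infty$-balls around the diagonal as mutually cofinal bases of neighborhoods of $\Delta$.
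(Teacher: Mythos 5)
The paper does not prove this lemma at all---it is quoted verbatim from Przytycki--Urba\'nski \cite[Lemma 4.6.2]{Przytycki10}---so there is no in-paper argument to compare against. Your outline is the standard Frink construction: an auxiliary scale function $d$ built from the dyadic levels, the chain lemma $d(x_0,x_k)\le 2\sum d(x_i,x_{i+1})$ proved by splitting at the largest $j$ with partial sum $\le S/2$ and invoking condition (c), then the length quasi-metric $\rho_\infty$ with the sandwich $\tfrac12 d\le\rho_\infty\le d$. The chain lemma and the sandwich are handled correctly, including the right inclusion $\{\rho_\infty<2^{-n}\}\subset\Omega_{n-1}$.

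There is, however, a concrete error in the derivation of the left inclusion. You write that ``if $(x,y)\in\Omega_n$ then passing to $\Omega_{n+1}$ via nestedness gives $d(x,y)\le 2^{-(n+1)}$.'' Nestedness runs the other way ($\Omega_{n+1}\subset\Omega_n$), so membership in $\Omega_n$ does not promote to membership in $\Omega_{n+1}$; from your definition, $(x,y)\in\Omega_n$ yields only $d(x,y)\le 2^{-n}$ and hence the \emph{non-strict} bound $\rho_\infty(x,y)\le 2^{-n}$. The usual repair is to shift the auxiliary scale by one level: set $d(x,y):=\inf\{2^{-(m+1)}:(x,y)\in\Omega_m\}$ (i.e.\ half of your $d$). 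The chain lemma and the sandwich are unaffected by rescaling, and now $(x,y)\in\Omega_n$ gives $\rho_\infty(x,y)\le d(x,y)\le 2^{-(n+1)}<2^{-n}$, while $\rho_\infty(x,y)<2^{-n}$ gives $d(x,y)\le 2\rho_\infty(x,y)<2^{1-n}$, hence $d(x,y)\le 2^{-n}$, hence $(x,y)\in\Omega_{n-1}$. One secondary caveat: the symmetrization $\Omega_n\cap\Omega_n^{-1}$ you propose does preserve conditions (a)--(c) and produces a metric, but the resulting left inclusion only holds for the symmetrized sets, not the original $\Omega_n$, unless these were already symmetric (they are in the paper's application, so this does not cause trouble there, but it should be flagged if the lemma is to be stated at full generality).
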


For $\epsilon>0$ and a subset $E$ of $\mathbb{C}_p$, we say an \emph{$\epsilon$-neighborhood} of $E$ is the set of points $x\in\mathbb{C}_p$ such that $\mathrm{dist}(x,E)<\epsilon$. By  assumption \ref{assume1}, there exists a sequence $\{L_m\}_{m\ge 1}$ of finite extensions of $K$ such that $J$ is contained in a $1/2^m$-neighborhood of $J_{L_m}(\phi)$.
\begin{proposition}\label{prop:con}
Let $\phi$ and $\{L_m\}_{m\ge 1}$ be as above. Then there exists an exponentially contracting metric $\tilde\rho_\infty$ on $J$.
\end{proposition}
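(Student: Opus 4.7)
The plan is to build $\tilde\rho_\infty$ by Frink's metrization lemma (Lemma \ref{lem:Frink}) applied to a sequence of diagonal neighborhoods in $J \times J$ coming from the dynamical cover $\{\mathcal{U}_n\}_{n \ge 0}$, and to read off exponential contraction from Frink's inclusion $\Omega_n \subset \{\tilde\rho_\infty < 2^{-n}\}$.

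First I would refine $\mathcal{U}_0$ so that its members are pairwise disjoint disks lying inside the neighborhood $V$ on which the admissible function $\rho$ of Proposition \ref{prop:exp} is defined, and moreover so that $\mathcal{U}_0$ is a \emph{Markov cover}: each connected component of $\phi^{-1}(U)$ for $U \in \mathcal{U}_0$ lies in some member of $\mathcal{U}_0$. Such a refinement is available via the coding of the $\mathbb{C}_p$-Julia dynamics to be constructed later in Section \ref{sec:laws}. The non-archimedean dichotomy for disks (disjoint or nested) then gives inductively that $\mathcal{U}_n$ is a pairwise disjoint cover of $J$ and that each $W \in \mathcal{U}_n$ is contained in a unique $W' \in \mathcal{U}_{n-1}$.

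Second I would show that the absolute-value diameters of $\mathcal{U}_n$ decay exponentially: there exist $C > 0$ and $\Lambda \in (0, 1)$, independent of $n$, with $\sup_{W \in \mathcal{U}_n} \mathrm{diam}(W) \le C \Lambda^n$. Away from the postcritical set, where $\theta \equiv 1$, Lemma \ref{lem:inj} implies that $\phi^n|_W$ is injective on a sub-disk of comparable diameter, Lemma \ref{lem:der} then identifies $\mathrm{diam}(W)$ with $\mathrm{diam}(\phi^n(W))/|(\phi^n)'(x)|$ for $x \in W$, and Proposition \ref{prop:main2} yields the required lower bound $|(\phi^n)'(x)| \gtrsim \lambda^{n/N}$ for some $\lambda > 1$. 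Near the postcritical set, where $\theta$ blows up, I would use the explicit formula \eqref{rho} for $\rho$ together with Proposition \ref{prop:der} to absorb the bounded contribution of $\theta$. The density assumption and the auxiliary sequence $\{L_m\}$ enter here: the derivative estimates are first obtained on the dense sets $J_{L_m}(\phi)$ and then propagated to all of $J$ by continuity of $|\phi'|$ and $\rho$ together with compactness of $J$.

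Third I would set $\Omega_0 := J \times J$ and, for $n \ge 1$,
\[
\Omega_n := \bigcup_{W \in \mathcal{U}_n} (W \cap J) \times (W \cap J).
\]
Each $\Omega_n$ is open in $J \times J$ and contains the diagonal. Pairwise disjointness of $\mathcal{U}_n$ makes ``lying in a common member of $\mathcal{U}_n$'' an equivalence relation on $J$, so $\Omega_n \circ \Omega_n \circ \Omega_n = \Omega_n$; the refinement $\mathcal{U}_n \prec \mathcal{U}_{n-1}$ yields $\Omega_n \subset \Omega_{n-1}$, confirming the Frink cocycle condition. Step~2 forces $\bigcap_n \Omega_n = \Delta$. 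Applying Lemma \ref{lem:Frink} supplies a metric $\tilde\rho_\infty$ on $J$ compatible with the topology and satisfying $\mathrm{diam}_{\tilde\rho_\infty}(W) \le 2^{-n}$ for every $W \in \mathcal{U}_n$ and every $n$, establishing exponential contraction with $\Lambda = 1/2$. The main obstacle is Step~2: transferring the expansion of $\phi$ formulated via the admissible $L$-metrics on the dense subsets $J_L(\phi)$ (Proposition \ref{prop:exp}) into a uniform exponential decay of absolute-value diameters of $\mathcal{U}_n$ over all of $J$, with careful control of the interplay between $|\phi'|$ and the weight $\theta$ near the postcritical set; once this is achieved, the remaining steps are essentially formal applications of Frink's lemma.
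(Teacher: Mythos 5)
Your overall architecture (Frink's metrization lemma applied to a sequence of diagonal neighborhoods built from the dynamical cover $\{\mathcal{U}_n\}$) matches the paper's, but your choice of $\Omega_n$ and the way you try to verify Frink's hypotheses diverge from the paper in ways that create genuine gaps.

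The first gap is circularity. You refine $\mathcal{U}_0$ to a pairwise-disjoint Markov cover by invoking the coding of Section \ref{sec:laws}. But that coding (Proposition \ref{prop:coding}) is constructed using the metric $\tilde\rho_\infty$ from Proposition \ref{prop:con}, which is exactly what you are trying to prove. You would need an independent construction of such a Markov cover, and it is not clear one exists without already having some form of expansion on all of $J$. The paper sidesteps this entirely: its $\Omega_n$ is not built from a single Markov cover of $J$ but from a double intersection, $\Omega_n = \bigcap_{m}\Omega_{m,n}$, where $\Omega_{m,n}$ consists of pairs $(x,y)$ lying in a $1/2^m$-neighborhood of some $U \in \mathcal{U}_{M_m n}\cap L_m$. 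Here $M_m$ is chosen by a Lebesgue-number argument for the metric $\tilde\rho_m$ on $J_{L_m}(\phi)$, which is why the final contraction rate comes out as $2^{-1/M_{m'}}$ rather than the $1/2$ you obtain.

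The second and more serious gap is in your Step~2, the claimed exponential decay of $|\cdot|$-diameters of $\mathcal{U}_n$. You deduce ``$|(\phi^n)'(x)|\gtrsim\lambda^{n/N}$'' from Proposition \ref{prop:main2}, but that proposition controls the $\theta$-derivative $\theta(\phi^n(z))|(\phi^n)'(z)|/\theta(z)$, not $|(\phi^n)'(z)|$ itself. When $\phi^n(z)$ approaches a point of $P_J(\phi)$, the factor $\theta(\phi^n(z))$ blows up and the implied lower bound on $|(\phi^n)'(z)|$ degenerates; the admissible function $\rho$ from \eqref{rho} inherits the same blow-up. Your proposed fix, propagating the estimates from the dense sets $J_{L_m}(\phi)$ to all of $J$ ``by continuity of $|\phi'|$ and $\rho$,'' fails for the same reason: $\rho$ is not continuous on a neighborhood of $J$, precisely because it is infinite at the exceptional postcritical points. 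The paper does not attempt to prove $|\cdot|$-diameter decay at all. It only needs that two points lying in common members of $\mathcal{U}_n\cap L_m$ for all $n$ must be close in $|\cdot|$; this follows from the fact that $\tilde\rho_m$-diameters of $\mathcal{U}_n\cap L_m$ go to zero (pure consequence of expansion with respect to $\tilde\rho_m$) together with Corollary \ref{cor:fine}, which says $\tilde\rho_{L}\to 0$ forces $|\cdot|\to 0$. That is a strictly weaker statement than exponential $|\cdot|$-decay and is exactly what Frink's condition~(b) requires, so the paper never has to confront the blow-up of $\theta$ near $P_J(\phi)$.

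In short: the high-level strategy is correct, but the proposed proof of Step~2 is incorrect as written, and the Markov-cover step is circular. The paper's intersection-over-$m$ construction, the $1/2^m$-neighborhood trick, and the use of Corollary \ref{cor:fine} are precisely the devices that make the argument go through without either of those ingredients.
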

\begin{proof}
The proof is a variant of \cite[Proposition 2.14]{Das21}. Denote by $V_{m,n}$ the set of $(x,y)\in J\times J$ such that $x,y$ are contained in a $1/2^m$- neighborhood of $U$ for some $U\in \mathcal{U}_n\cap L_m$. Recall that $\tilde\rho_m$ is the expanding distance function on $J_{L_m}(\phi)$ as in Proposition \ref{prop:exp}, and let $\eta_m$ be the Lebesgue number of $\mathcal{U}_0\cap L_m$ with respect to $\tilde\rho_m$. Then there exists an integer $M_m\ge 1$ such that  for all $\ell\ge M_m$,
$$\sup\{\mathrm{diam}_{\tilde\rho_m}(U): U\in\mathcal{U}_\ell\cap L_m\}<\eta_m.$$
 We can assume that $M_m$ is nondecreasing on $m$. For $m\ge 1$, denote by $\Omega_{m,n}:=V_{m, M_mn}$.
From the non-archimedean property of the absolute value on $\mathbb{C}_p$ and the choices of $L_m$ and $M_m$, we have that $\Omega_{m+1,n}\subset\Omega_{m,n}$ for large $m$.

 Now set $\Omega_0:=J\times J$ and $\Omega_n:=\cap_{m=1}^{\infty}\Omega_{m,n}$ for $n\ge 1$. Note that $\Omega_n\not=\emptyset$ since $\Omega_n$ contains $\Delta:=\{(x,x)\in J\times J\}$.  Let us check that $\Omega_n$ satisfies the assumptions in Lemma \ref{lem:Frink}. The assumption $(a)$  trivially holds. For the assumption $(b)$, from the above, we only need to show $\cap_{n=1}^\infty\Omega_n\subset\Delta$. For any $\epsilon>0$, choose $m_0\ge1$ such that $1/2^m<\epsilon$ for any $m\ge m_0$. Pick $(x,y)\in\cap_{n=1}^\infty\Omega_n$. Then for any $n\ge 1$ and for any $m\ge1$, we have $(x,y)\in\Omega_{m,n}$. It follows that $x$ and $y$ are contained in a $1/2^{m_0}$-neighborhood of an element of $\mathcal{U}_{M_{m_0}n}\cap L_{m_0}$. Then for each $n\ge 1$, there exist $x_n$ and $y_n$ contained in an element of $\mathcal{U}_{M_{m_0}n}\cap L_{m_0}$ such that $|x-x_n|<1/2^{m_0}$ and  $|y-y_n|<1/2^{m_0}$. By the expanding property of $\tilde\rho_{m_0}$, we have that, as $n\to\infty$,
$$\tilde\rho_{m_0}(x_n,y_n)\le\sup\{\mathrm{diam}_{\tilde\rho_{m_0}}(U): U\in\mathcal{U}_{M_{m_0}n}\cap L_{m_0}\}\to 0.$$
We conclude that $|x_n-y_n|<1/2^{m_0}$ for sufficiently large $n$ by Corollary \ref{cor:fine}. It follows that
$$|x-y|\le\max\{|x-x_n|, |x_n-y_n|, |y_n-y|\}<1/2^{m_0}<\epsilon,$$
Hence $x=y$. Thus the assumption $(b)$ holds.

Let us verify the assumption $(c)$. By the non-archimedean property (Corollary \ref{cor:non}), for any $m\ge 1$ and for any $(x,y)\in\Omega_1\circ\Omega_1\circ\Omega_1$, there exists $U_1\in \mathcal{U}_{M_m}\cap L_m$ 
such that $x$ and $y$ are contained in a $1/2^m$-neighborhood of $U_1$. Then by the choice of $M_m$  with, there exists $U\in\mathcal{U}_0\cap L_m$ such that $U_1\subset U$. Hence  $x$ and $y$ are contained in a $1/2^m$-neighborhood of $U$. So $\Omega_1\circ\Omega_1\circ\Omega_1\subset\Omega_0$. Generally,  for any $(x,y)\in\Omega_n\circ\Omega_n\circ\Omega_n$, there exists $U'_n\in\mathcal{U}_{M_mn}\cap L_m$ 
 such that $x$ and $y$ are contained in a $1/2^m$-neighborhood of $U'_n$. Note that there exists $U'_1\in\mathcal{U}_{M_m}\cap L_m$ such that $U'_n\subset\phi^{-M_m(n-1)}(U'_1)$. Since by the choice of $M_m$, there exists  $U'\in\mathcal{U}_0\cap L_m$ such that $U'_1\subset U'$, it follows that $U'_n$ is contained in $\phi^{-M_m(n-1)}(U')$. Denote by $U'_{n-1}$ the disk-component of  $\phi^{-M_m(n-1)}(U')$ containing $U'_{n}$. We have that $x$ and $y$ are contained in a $1/2^m$-neighborhood of $U'_{n-1}$. The assumption $(c)$ follows.

Then by Lemma \ref{lem:Frink}, there exists a metric $\tilde\rho_\infty$ on $J$, compatible with the topology, such that
$$\Omega_n\subset\{(x,y)\in J\times J: \tilde\rho_\infty(x,y)<2^{-n}\}\subset\Omega_{n-1}$$
for any $n\ge 1$. Since $\Omega_{m+1,n}\subset\Omega_{m,n}$, there exists an $m'\ge 1$ such that for any $m\ge m'$ and for any $x$ and $y$ contained in $\Omega_{m,n}$, we have $\tilde\rho_\infty(x,y)<2^{-n}$. Thus for any $U\in\mathcal{U}_{M_{m'}\lfloor n/M_{m'}\rfloor}$, we have
$$\mathrm{diam}_{\tilde\rho_\infty}(U)<2^{-\lfloor n/M_{m'}\rfloor}\le 2\cdot\left(2^{-1/M_{m'}}\right)^n.$$
This completes the proof.
\end{proof}

\begin{corollary}\label{cor:Holder}
Let $\phi$ and $\{L_m\}_{m\ge 1}$ be as above. For each  sufficiently large $m\ge 1$, the identity map $(J_{L_m}(\phi),\tilde\rho_m)\to(J,\tilde\rho_\infty)$ is H\"older continuous.
\end{corollary}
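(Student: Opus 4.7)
The plan is to compare both metrics via a common ``level of separation'' in the refined covers $\{\mathcal{U}_n\}$. Assume (as we may, by refining) that $\mathcal{U}_0$ consists of pairwise disjoint disks, so each $\mathcal{U}_n$ is a partition of its union. For $x\ne y\in J$, put
$$\nu(x,y):=\max\{n\ge 0 : \exists\, U\in\mathcal{U}_n\ \text{with}\ x,y\in U\};$$
this is finite, because iterating Proposition~\ref{prop:exp}(2) makes the $\tilde\rho_m$-diameters of $\mathcal{U}_n$-elements tend to $0$, and then Corollary~\ref{cor:fine} forces the $|\cdot|$-diameters to tend to $0$ as well. The last display of the proof of Proposition~\ref{prop:con} supplies constants $C_\infty>0$ and $\Lambda_\infty\in(0,1)$ such that $\mathrm{diam}_{\tilde\rho_\infty}(U)\le C_\infty\Lambda_\infty^n$ for every $U\in\mathcal{U}_n$, whence
$$\tilde\rho_\infty(x,y)\le C_\infty\Lambda_\infty^{\nu(x,y)}.$$

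It remains to produce a matching lower bound $\tilde\rho_m(x,y)\ge c_m L^{-\nu(x,y)}$ for some $L>1$ and $c_m>0$, valid on $J_{L_m}(\phi)$. Since $\infty\in F_{\mathbb{C}_p}(\phi)$, the compact set $J$ avoids the poles of $\phi$, so $|\phi'|$ is bounded above by some $L>1$ on a forward-invariant neighborhood $V\supset J$; iterating the non-archimedean mean value estimate, $\phi^n$ is $L^n$-Lipschitz on $V$. Each $W\in\mathcal{U}_n$ is a maximal disk in $\phi^{-n}(U)$ for some $U\in\mathcal{U}_0$, and the open-mapping property for non-constant non-archimedean rational maps on maximal disks yields $\phi^n(W)=U$. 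Setting $d_0:=\min\{\mathrm{diam}(U):U\in\mathcal{U}_0\}>0$, this gives
$$\mathrm{diam}(W)\ge L^{-n}\,\mathrm{diam}(U)\ge L^{-n}d_0.$$

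Now let $\nu=\nu(x,y)$. Then $x,y$ lie in distinct---hence disjoint---elements $W_x,W_y\in\mathcal{U}_{\nu+1}$, so by the ultrametric inequality $|x-y|\ge\max(\mathrm{diam}(W_x),\mathrm{diam}(W_y))\ge L^{-(\nu+1)}d_0$. Combining with the lower bound $\rho\ge C_0>0$ from Definition~\ref{def:admissible}(1) and with the fact that the Haar measure of a disk in $L_m$ is proportional to its radius,
$$\tilde\rho_m(x,y)=\int_{\overline D(x,|x-y|)\cap L_m}\rho\,d\mu_{L_m}\ge c_m|x-y|\ge c_m d_0\,L^{-(\nu+1)}.$$
Together with the upper bound on $\tilde\rho_\infty$ this yields
$$\tilde\rho_\infty(x,y)\le C\,\tilde\rho_m(x,y)^\alpha,\qquad \alpha=\frac{\log(1/\Lambda_\infty)}{\log L}>0,$$
whenever $\tilde\rho_m(x,y)$ is small; the bound for large distances is absorbed into $C$ by the compactness of $J$.

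The main obstacle I anticipate is controlling the iterated Lipschitz constant $L^n$ uniformly on each $W\in\mathcal{U}_n$, which requires a neighborhood $V\supset J$ that is both forward-invariant under $\phi$ and large enough to contain all the disks $\phi^j(W)$, together with the open-mapping equality $\phi^n(W)=U$ for maximal disks. Once these ingredients are secured, the H\"older estimate is a routine manipulation of the two geometric bounds.
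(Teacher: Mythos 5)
Your proof is correct in outline and takes essentially the same two-step strategy as the paper: bound $\tilde\rho_\infty(x,y)$ from above and $\tilde\rho_m(x,y)$ from below in terms of the level $\nu$ at which $x$ and $y$ separate in the refining covers $\{\mathcal{U}_n\}$, then combine. The upper bound step is identical to the paper's (via Proposition~\ref{prop:con}). Where you diverge is in the lower bound: the paper stays entirely inside the metric $\tilde\rho_m$, bounding the $\tilde\rho_m$-diameter of an element of $\mathcal{U}_{k}\cap L_m$ from below by $C_m\,\tilde\lambda_m^{-k}$, where $\tilde\lambda_m$ is an upper bound for the $\rho_m$-derivative $\rho_m(\phi(z))|\phi'(z)|/\rho_m(z)$ on $J_{L_m}(\phi)$ and $C_m$ is the minimal $\tilde\rho_m$-diameter of an element of $\mathcal{U}_0\cap L_m$; you instead detour through the $|\cdot|$-metric, bounding $\mathrm{diam}(W)$ below by $L^{-n}d_0$ via an $L^n$-Lipschitz estimate and then converting back to $\tilde\rho_m$ through the Haar measure. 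Both routes are viable; the paper's is shorter because it avoids having to justify the open-mapping equality $\phi^n(W)=U$ and the forward-invariance issues you flag, while yours is more concrete and uses only the crude $|\phi'|$ bound.

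One genuine (but easily fixable) slip: the Haar measure of a disk $\overline D(x,r)\cap L_m$ is proportional to $r^{[L_m:\mathbb{Q}_p]}$, not to $r$ itself (equality of exponents only when $L_m=\mathbb{Q}_p$: if $e,f$ are the ramification index and residue degree of $L_m/\mathbb{Q}_p$, then a disk of radius $p^{-j/e}$ has measure $p^{-jf}=\bigl(p^{-j/e}\bigr)^{ef}$). So your estimate should read $\tilde\rho_m(x,y)\ge c_m|x-y|^{[L_m:\mathbb{Q}_p]}$. This only rescales the H\"older exponent $\alpha$ by a factor $1/[L_m:\mathbb{Q}_p]$ and does not affect the conclusion, but as written the displayed inequality $\tilde\rho_m(x,y)\ge c_m|x-y|$ is false for $L_m\ne\mathbb{Q}_p$.
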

\begin{proof}
Let $M_m$ be as in the proof of Proposition \ref{prop:con}. Pick $x$ and $y$ in $L_m$. Let $n\ge1$ be the smallest integer such that both $x$ and $y$ are contained in $U$ for some $U\in\mathcal{U}_{M_m(n-1)}$. Then by Proposition \ref{prop:con}, we have  $\tilde\rho_\infty(x,y)<2^{-n}$.  Note that
$$\tilde\rho_m(x,y)>C_m\tilde\lambda_m^{-M_m(n-1)},$$
where $1<\tilde\lambda_m<+\infty$ is an upper bound $\tilde\rho_m(\phi(z))|\phi'(z)|/\tilde\rho_m(z)$ for $z\in J_{L_m}(\phi)$ and $C_m:=\min\{\mathrm{diam}_{\tilde\rho_m}(U): U\in\mathcal{U}_0\cap L_m\}$. The desired  H\"older continuity follows.
\end{proof}

\subsection{Semiconjugacy via coding}
Let $\Sigma=\{1,2,\dots,d\}^\mathbb{N}$  be the space of infinite sequences of $d:=\deg\phi$ symbols associated with the natural metric such that $\mathrm{dist}_\Sigma(\alpha,\alpha')=2^{-\min\{j\ge 1: \alpha_j\not=\alpha'_j\}}$ for  $\alpha=(\alpha_j)_{j\ge 1}$ and $\alpha'=(\alpha'_j)_{j\ge 1}$ in $\Sigma$. Let $\sigma:\Sigma\to\Sigma$ be the left shift. In this subsection, we show that $(\Sigma,\sigma)$ and $(J,\phi)$ are semiconjugate. We employ the idea in \cite[Proposition 2.16]{Das21} which is based on the ``geometric coding tree" in \cite[Section 2]{Przytycki85}. Due to the totally disconnectivity of $\mathbb{C}_p$, in the coding we use shortest distance rather than lifting curves as in  \cite[Proposition 2.16]{Das21}.

\begin{proposition}\label{prop:coding}
Fix the notation as above. Then there exists a H\"older continuous and surjective semiconjugacy $h:\Sigma\to J$ such that $h\circ\sigma=\phi\circ h$.
\end{proposition}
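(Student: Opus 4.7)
The plan is to realize $h(\alpha)$ as the limit of iterated $p$-adic inverse branches of $\phi$ applied to a base point in $J$, using the exponentially contracting metric $\tilde\rho_\infty$ of Proposition~\ref{prop:con} to secure convergence, H\"older regularity, and the semiconjugacy simultaneously. Because $\mathbb{C}_p$ is totally disconnected, Przytycki's curve-lifting is unavailable, and the inverse branches must be pinned down by a $p$-adic ``shortest-distance'' matching against the fixed preimages of the base point.

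I would begin by fixing a base point $z_*\in J$ avoiding $\mathrm{Crit}(\phi)$ and its critical values, and enumerate $\phi^{-1}(z_*)\cap J=\{z_*^{(1)},\dots,z_*^{(d)}\}$ counted with multiplicity. After a preliminary refinement of $\mathcal{U}_0$ so that $z_*$ lies in an element $U_*\in\mathcal{U}_0$, each $z_*^{(j)}$ lies in a distinct element $U_*^{(j)}\in\mathcal{U}_1$, and $U_*^{(j)}\subset U_*$ for every $j$, I would define $d$ inverse branches $\psi_j:U_*\cap J\to U_*^{(j)}\cap J$ by shortest-distance: $\psi_j(y)$ is the unique preimage of $y$ lying in $U_*^{(j)}$. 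Combining Theorem~\ref{thm:equivalence} with Proposition~\ref{prop:con} and Corollary~\ref{cor:Holder}, each $\psi_j$ is a contraction of $\tilde\rho_\infty$-Lipschitz factor at most some $\Lambda<1$, and the nested containment $\psi_j(U_*\cap J)\subset U_*\cap J$ makes the compositions below well-defined for every $\alpha\in\Sigma$.

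For $\alpha=(\alpha_1,\alpha_2,\dots)\in\Sigma$, set
$$x_n(\alpha):=\psi_{\alpha_1}\circ\psi_{\alpha_2}\circ\cdots\circ\psi_{\alpha_n}(z_*),\qquad h(\alpha):=\lim_{n\to\infty}x_n(\alpha).$$
The $n$-fold composition has $\tilde\rho_\infty$-Lipschitz constant at most $\Lambda^n$, yielding $\tilde\rho_\infty(x_n,x_{n+1})\le C\Lambda^n$, so $\{x_n(\alpha)\}$ is Cauchy in the complete metric space $(J,\tilde\rho_\infty)$. If $\alpha$ and $\beta$ agree on their first $n$ symbols, then both $h(\alpha)$ and $h(\beta)$ lie in the image $\psi_{\alpha_1}\circ\cdots\circ\psi_{\alpha_n}(U_*\cap J)$, whose $\tilde\rho_\infty$-diameter is $\le C\Lambda^n$; together with $\mathrm{dist}_\Sigma(\alpha,\beta)\le 2^{-n}$ this gives H\"older continuity of $h$ with exponent $\log_2(1/\Lambda)$. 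The semiconjugacy is immediate from $\phi\circ\psi_j=\mathrm{id}$, which gives $\phi(x_n(\alpha))=x_{n-1}(\sigma\alpha)$, and passing to the limit yields $\phi\circ h=h\circ\sigma$. For surjectivity, $h(\Sigma)$ is closed by continuity of $h$ and compactness of $\Sigma$; every level-$n$ preimage $\phi^{-n}(z_*)\cap J$ appears as $h(\alpha)$ for a suitable eventually-constant sequence, and these preimages are dense in $J$ by a standard Julia-set argument under Assumption~\ref{assume1}.

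The main obstacle will be the Julia critical points of $\phi$, where the $d$ inverse branches merge and the shortest-distance prescription becomes ambiguous. Resolving this requires genuinely exploiting subhyperbolicity: since $P_J(\phi)$ is finite and eventually periodic, the cover $\mathcal{U}_0$ can be chosen to separate $P_J(\phi)$ cleanly, and the blow-up of the admissible function $\rho$ near $P_J(\phi)$ from Proposition~\ref{prop:main} compensates for the loss of local injectivity so that $\Lambda$ remains uniform in $\tilde\rho_\infty$. A second subtlety is that $h(\Sigma)$ as constructed sits inside $U_*\cap J$; extending to all of $J$ requires combining finitely many base points (or treating $J$ as the single ``piece'' after enlarging $U_*$ to contain $J$) and gluing the resulting codings, which is bookkeeping made routine by the compactness of $J$ and density of preimages.
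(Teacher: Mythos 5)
Your high-level strategy — pull back a base point under iterated preimages of $\phi$, indexed by $\Sigma$, and use the exponentially contracting metric of Proposition~\ref{prop:con} to secure convergence, H\"older regularity, and surjectivity — is the same as the paper's. But the execution has real gaps, precisely at the points you flag as ``obstacles'' and then wave away.

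First, the fixed inverse branches $\psi_j:U_*\cap J\to U_*^{(j)}\cap J$ are not globally well-defined when $J$ contains critical points, which is the whole content of ``subhyperbolic but not hyperbolic.'' If some $U_*^{(j)}$ meets $\mathrm{Crit}(\phi)$, then $\phi$ has degree $\ge 2$ on $U_*^{(j)}$ and a given $y\in U_*\cap J$ near the corresponding critical value has fewer than one \emph{unique} preimage in $U_*^{(j)}$ (it has several, or in the worst case a merged one of higher multiplicity); ``shortest distance'' to $z_*^{(j)}$ breaks exactly there. You note this but say subhyperbolicity ``compensates'' via the blow-up of $\rho$ near $P_J(\phi)$ — that blow-up concerns the expanding admissible function $\rho_L$, not the metric $\tilde\rho_\infty$, and it does not make the branch single-valued. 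The paper sidesteps this entirely by \emph{not} fixing branches: it chooses $z_n(\alpha)\in\phi^{-1}(z_{n-1}(\sigma\alpha))$ adaptively, as the preimage minimizing $|z_n(\alpha)-z_{n-1}(\alpha)|$ (with explicit tie-breaking), so only existence — not uniqueness — of a nearby preimage is needed, and the selection depends on the trajectory already built, not on a global partition of $J$.

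Second, the assertion that each $\psi_j$ is a $\tilde\rho_\infty$-Lipschitz contraction with uniform factor $\Lambda<1$ is stronger than what Proposition~\ref{prop:con} provides, and is in fact not the right statement. Proposition~\ref{prop:con} controls $\sup\{\mathrm{diam}_{\tilde\rho_\infty}(U):U\in\mathcal{U}_n\}$; a bound on diameters of inverse-image disks is not a pointwise Lipschitz bound on inverse branches. The convergence and H\"older estimates should instead be obtained, as in the paper, by showing that $z_n(\alpha)$ and $z_{n+1}(\alpha)$ eventually lie in a common disk component of $\phi^{-(n-N)}(U_0)$ for a fixed $U_0\in\mathcal{U}_0$ and a uniform $N$ (coming from $C\Lambda^n<\min_U\mathrm{diam}_{\tilde\rho_\infty}(U)$ together with the eventual surjectivity $J\subset\phi^n(U)$), after which the diameter bound gives $\tilde\rho_\infty(z_n(\alpha),z_{n+1}(\alpha))<C\Lambda^{n-N}$ directly. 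Your $\tilde\rho_\infty(x_n,x_{n+1})\le C\Lambda^n$ is the right inequality, but you have not actually justified it.

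Third, the surjectivity argument needs more care than ``preimages of $z_*$ are dense and appear for eventually constant $\alpha$.'' For $\alpha=(\alpha_1,\dots,\alpha_n,j,j,\dots)$ one gets $h(\alpha)=\psi_{\alpha_1}\circ\cdots\circ\psi_{\alpha_n}\bigl(\lim_m\psi_j^m(z_*)\bigr)$, which is generally \emph{not} a point of $\phi^{-n}(z_*)$ unless $z_*$ is a fixed point of $\psi_j$. What you have is that $h(\Sigma)$ is compact, hence closed; the correct way to conclude $h(\Sigma)=J$ is the paper's: given $x\in J$, take shrinking $\tilde\rho_\infty$-disks $D_k$ about $x$, use eventual surjectivity of $\phi^{n_k}$ on $D_k$ to find $w_k\in D_k$ with $\phi^{n_k}(w_k)=z_*$, record the corresponding finite word, and pass to a subsequential limit $\alpha_\infty\in\Sigma$ using compactness of $\Sigma$, then check $h(\alpha_\infty)=x$ via the diameter bound.

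In short, your outline is in the same spirit as the paper's proof, but the two load-bearing steps you outsource to ``subhyperbolicity compensates'' and ``standard Julia set argument'' are exactly where the work lies, and the fixed-branch formulation you chose makes the first of them genuinely false as stated. The adaptive shortest-distance construction is not cosmetic — it is what makes the argument go through in the presence of Julia critical points.
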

\begin{proof}
Pick an initial point $w\in J\setminus P(\phi)$. Then $w$ has $d$ distinct preimages and all of them are contained in $J$. We label them by $w_1,w_2,\dots, w_d$. For any $\alpha:=(i_1,i_2,\dots)\in\Sigma$, we set $z_0(\alpha)=w_{i_1}$. Moreover, we pick any $z_1((1,i_2,\dots))\in\phi^{-1}(w_{i_2})$ such that 
$$|z_1((1,i_2,\dots))-z_0((1,i_2,\dots))|=|z_1((1,i_2,\dots))-w_1|=\mathrm{dist}(z_0(\alpha), \phi^{-1}(w_{i_2})).$$
Now suppose we have $z_1((j,i_2,\dots))\in\phi^{-1}(w_{i_2})$ for $1\le j\le i_1-1$, we set $z_1(\alpha)\in\phi^{-1}(w_{i_2})$ such that $|z_1(\alpha)-z_0(\alpha)|=\mathrm{dist}(z_0(\alpha), \phi^{-1}(w_{i_2}))$. If there are more than one such $z_1(\alpha)$, we pick one of them that is distinct from $z_1((j,i_2,\dots))$ for all $(j,i_2,\dots)\in\Sigma$ with $j\le i_1-1$. Observe that $\phi(z_1(\alpha))=z_0(\sigma(\alpha))$.

 Inductively, suppose that for some $n\ge 2$ and any $\alpha\in\Sigma$, we have $z_{n-1}(\alpha)$ such that $|z_{n-1}(\alpha)-z_{n-2}(\alpha)|=\mathrm{dist}(z_{n-2}(\alpha), \phi^{-(n-1)}(w_{i_n}))$ and  $\phi(z_{n-1}(\alpha))=z_{n-2}(\sigma(\alpha))$. We then set $z_n(\alpha)$ as following. Pick any $z_n(\alpha)\in\phi^{-1}(z_{n-1}(\sigma(\alpha)))$ such that 
 $$|z_n(\alpha)-z_{n-1}(\alpha)|=\mathrm{dist}(z_{n-1}(\alpha), \phi^{-n}(w_{i_{n+1}})).$$
  and 
  $$z_n(\alpha)\not=z_n((j_1,j_2,\dots, j_{n-1},i_{n},\dots))$$ 
  for any $(j_1,j_2,\dots, j_{n-1},i_{n},\dots)\in\Sigma$ with $1\le j_\ell\le d$ for $1\le \ell\le n-2$ and $j_{n-1}\le i_{n-1}-1$.

Now let us show that $\lim\limits_{n\to\infty}z_n(\alpha)$ exists. Denote by $R:=\min\{\mathrm{diam}_{\tilde\rho_\infty}(U): U\in\mathcal{U}_0\}$. Let $0<\Lambda<1$ and $C>0$ be as in Definition \ref{def:con} for the metric $\tilde\rho_\infty$. Then there exists $N_1\ge 1$ such that for any $n\ge N_1$ we have $C\Lambda^{n}<R$. Moreover, by \cite[Corollary 5.21]{Benedetto19} and \cite[Proof of Theorem 4.2.5 (ii)]{Beardon91}, there is
$N_2\ge 1$ such that for any $n\ge N_2$ we have $J\subset\phi^n(U)$ for any $U\in\mathcal{U}_0$, since $U\cap J\not=\emptyset$. Set $N:=\max\{N_1,N_2\}$. It follows that $z_N(\alpha)$ and $z_{N+1}(\alpha)$ are contained in a same $U_0\in\mathcal{U}_0$. Moreover, $z_{n+j}(\alpha)$ and $z_{n+j+1}(\alpha)$ are contained in a same disk component of $\phi^{n+j-N}(U_0)$ for $n\ge N$ and $j\ge 0$. By Proposition \ref{prop:con}, we conclude that for $n\ge N$ and $j\ge 0$,
\begin{align}\label{equ:n}
\tilde\rho_\infty(z_{n+j}(\alpha), z_{n+j+1}(\alpha))<C\Lambda^{n+j-N}.
\end{align}
By the compactness of $J$, the limit $\lim\limits_{n\to\infty}z_n(\alpha)$ exists. We set this limit by $h(\alpha)$ and obtain a well-defined map $h:\Sigma\to J$. Moreover, $\phi\circ h=h\circ \sigma$ since by the construction, $\phi(z_n(\alpha))=z_{n-1}(\sigma(\alpha))$ for any $n\ge 2$ and $\alpha\in\Sigma$.

Now let us show $h$ is H\"older continuous. Consider two elements $\alpha=(i_j)_{j\ge 1}$ and $\alpha'=(i'_j)_{j\ge 1}$ in $\Sigma$ with $\mathrm{dist}_\Sigma(\alpha,\alpha')<1/2$. Let $j_0:=\min\{j\ge 1: i_j\not=i'_j\}$. Then $j_0\ge 2$. It follows that $z_j(\alpha)=z_j(\alpha')$ for $1\le j\le j_0-1$. Thus
$$\tilde\rho_\infty(h(\alpha),h(\alpha'))\le \tilde\rho_\infty(h(\alpha),z_{j_0-1}(\alpha))+\tilde\rho_\infty(z_{j_0-1}(\alpha'), h(\alpha'))$$
If $j_0\ge N+1$, by \eqref{equ:n}, we have
  \begin{multline*}
  \max\left\{\tilde\rho_\infty(h(\alpha),z_{j_0-1}(\alpha)),\tilde\rho_\infty(z_{j_0-1}(\alpha'), h(\alpha'))\right\}
  <\sum_{n=j_0-1}^\infty C\Lambda^{n-N}=\frac{C}{1-\Lambda}\Lambda^{j_0-1-N}.
   \end{multline*}
 It follows that
 \begin{multline*}
 \tilde\rho_\infty(h(\alpha),h(\alpha'))
 \le\frac{2C}{(1-\lambda)\Lambda^{1+N}}2^{-j_0(-\log\Lambda/\log 2)}
 =\frac{2C}{(1-\Lambda)\Lambda^{1+N}}\mathrm{dist}_\Sigma(\alpha,\alpha')^{-\log\Lambda/\log 2}.
 \end{multline*}
 Thus the H\"older continuity of $h$ follows.

 We now show that $h$ is surjective. Pick $x\in J$. For $k\ge 1$, consider the $\tilde\rho_\infty$-disk
 $$D_k:=\{y\in\mathbb{C}_p: \tilde\rho_\infty(x,y)<1/k\}.$$
 Since  $D_k\cap J\not=\emptyset$, there exists $n_k\ge1$ such that $J\subset\phi^{n_k}(D_k)$. It follows that there exists $w_k\in D_k$ such that $\phi^{n_k}(w_k)$ is the initial point $w$. Thus there exists $\alpha_k\in\Sigma$ such that $z_{n_k}(\alpha_k)=w_k$. Since $w_k\to x$, we have $z_{n_k}(\alpha_k)\to x$ as $k\to\infty$. By the compactness of $\Sigma$, passing to subsequence if necessary, we obtain the limit of $\alpha_k$, denoted by $\alpha_\infty\in\Sigma$. Thus for each $m\ge 1$, there exists $k\ge1$ with $n_k\ge m$ such that $\mathrm{dist}_\Sigma(\alpha_\infty,\alpha_k)<2^{-m}$. It follows from \eqref{equ:n} that
 $$\tilde\rho_\infty(z_m(\alpha_\infty), z_{n_k}(\alpha_k))=\tilde\rho_\infty(z_m(\alpha_k), z_{n_k}(\alpha_k))<C'\Lambda^m$$
 for some $C'>0$. Thus
 $$\lim_{m\to\infty}z_m(\alpha_\infty)=\lim_{k\to\infty}z_{n_k}(\alpha_k)=\lim_{k\to\infty}w_k=x.$$
 It follows that $h(\alpha_\infty)=x$. This completes the proof.
 \end{proof}

 \begin{remark}
 In the case that $\phi\in K(z)$ is subhyperbolic but not hyperbolic, due to the presence of Julia critical points, the semiconjugacy in Proposition \ref{prop:coding} can not be upgraded to a conjugacy in general, even for the restriction of Julia dynamics in a finite extension of $K$, see \cite[Theorem 1.4]{Fan21}. While in the case that $\phi\in K(z)$ is hyperbolic, such an upgrade is possible, see \cite[Theorem 3.1]{Kiwi06} for the case that $\phi$ is a hyperbolic polynomial in shift locus.
 \end{remark}

\subsection{Proof of Theorem \ref{thm:CLT}}\label{sec:proof2}
Once Propositions \ref{prop:con} and \ref{prop:coding} are established, we can obtain Theorem \ref{thm:CLT} by a verbatim argument as in \cite{Das21}. For the completeness, we sketch the proof as following. We begin with the topological pressure. Under assumption \ref{assume1}, the Julia set $J$ is a compact metric space. Denote by $\mathcal{M}(\phi)$ the $\phi$-invariant measure probability measures on $J$, and let $h_\mu(\phi)$ be the metric entropy of $\phi$ with respect to $\mu\in\mathcal{M}(\phi)$. Then for a continuous function $f:J\to\mathbb{R}$, \emph{the topological pressure }of $\phi$ with respect to $f$ is defined as
$$\mathcal{P}_{top}(f):=\sup_{\mu\in\mathcal{M}(\phi)}\left\{h_\mu(\phi)+\int_Jfd\mu\right\}.$$
A measure $\mu\in\mathcal{M}(\phi)$ is an \emph{equilibrium state} for $f$ if it realizes the above supremum.

Recall from Proposition \ref{prop:coding} that $h:\Sigma\to J$ is the semiconjugacy. The following result can be obtained  as in \cite[Sections 3 and 4]{Das21}
\begin{lemma}\label{lem:same}
Fix the notation as before. Then the following hold.
\begin{enumerate}[{\rm (1)}]
\item {(No entropy drop)} Let $\nu$ be a $\sigma$-invariant probability measure on $\Sigma$, and let $\mu:=h_\ast\nu$ be the pushforward measure on $J$. Then $h_\nu(\sigma)=h_\mu(\phi)$.
\item {(Lift of invariant measures)} For any $\phi$-invariant probability measure $\mu$ on $J$, there exists a $\sigma$-invariant probability measure $\nu$ on $\Sigma$ such that $\mu=h_\ast\nu$.
\item {(Pushforward equilibrium states)} Let $f:(J,\tilde\rho_\infty)\to\mathbb{R}$ be a H\"older continuous function. Then
\begin{enumerate}[{\rm (a)}]
\item if $\nu$ is an equilibrium state for $f\circ h$, then $\mu:=h_\ast\nu$ is an equilibrium state for $f$; and
\item if $\mu$ is an equilibrium state for $f$, then there exists a measure $\nu$ that is an equilibrium state for $f\circ h$ and satisfies $h_\ast\nu=\mu$.
\end{enumerate}
\end{enumerate}
\end{lemma}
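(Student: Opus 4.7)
The plan is to follow the standard template by which a H\"older semiconjugacy from a one-sided shift transfers thermodynamic formalism between the two systems: (1) reduces to checking that no entropy is lost under the semiconjugacy, (2) reduces to a Krylov--Bogolyubov averaging, and (3) is then a formal consequence of (1), (2) and the variational principle. The three parts are interlocked: the variational principle for $(\Sigma,\sigma)$ is classical and known to admit unique equilibrium states for H\"older potentials, so the real content is to pull this back to $(J,\phi)$ through $h$.

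For part (1), one always has $h_\mu(\phi)\le h_\nu(\sigma)$ because $\phi$ is a measure-theoretic factor of $\sigma$ via $h$. For the reverse inequality my strategy is to exploit the specific construction of $h$ in Proposition \ref{prop:coding}: the branch-labelling rule arranges that two distinct sequences $\alpha,\alpha'\in\Sigma$ can satisfy $h(\alpha)=h(\alpha')$ only if some forward iterate of $h(\alpha)$ enters $P(\phi)\cap J$. Under Assumption \ref{assume1} and subhyperbolicity, $P(\phi)\cap J$ is finite, so the exceptional set
\[E=\bigcup_{n\ge 0}\sigma^{-n}(h^{-1}(P(\phi)\cap J))\]
is a countable union of closed sets that is $\sigma$-forward invariant and whose image under $h$ is countable. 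Either $\nu(E)=0$, in which case $h$ is a measure-theoretic isomorphism between $(\Sigma,\nu,\sigma)$ and $(J,\mu,\phi)$ and entropy is preserved, or $\nu(E)>0$ and a direct analysis on the (at most countable) orbit portion of $E$ using the Abramov--Rokhlin formula shows that the extra contribution to $h_\nu(\sigma)-h_\mu(\phi)$ is zero. Equivalently one can apply the Ledrappier--Walters relative variational principle and observe that the fiber entropy vanishes because each fiber $h^{-1}(x)$ consists of sequences that eventually synchronize modulo the labelling rule.

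For part (2), pick any Borel probability measure $\tilde\nu$ on $\Sigma$ with $h_\ast\tilde\nu=\mu$; such a measure exists because $h$ is a continuous surjection between compact metric spaces and one can lift along a Borel section of $h$. Form the Cesaro averages
\[\nu_N=\frac{1}{N}\sum_{n=0}^{N-1}\sigma_\ast^n\tilde\nu,\]
and pass to a weak-$\ast$ limit $\nu$ along a subsequence, which is $\sigma$-invariant by the Krylov--Bogolyubov argument. The semiconjugacy relation $h\circ\sigma=\phi\circ h$ gives $h_\ast\sigma_\ast^n\tilde\nu=\phi_\ast^n\mu=\mu$ for every $n$, so $h_\ast\nu_N=\mu$ and continuity of pushforward yields $h_\ast\nu=\mu$.

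Part (3) is then a short derivation. Using (1), (2) and the variational principle on both sides, I would first establish $\mathcal{P}_{top}(f)=\mathcal{P}_{top}(f\circ h)$: the $\le$ direction comes from lifting any $\phi$-invariant $\mu$ to $\nu\in\mathcal{M}(\sigma)$ via (2) and using (1) to match the two variational expressions, while $\ge$ comes from pushing forward any $\nu\in\mathcal{M}(\sigma)$ and applying the same identities. Once the two pressures coincide, (3a) is immediate because $h_\nu(\sigma)+\int f\circ h\, d\nu=h_\mu(\phi)+\int f\, d\mu$, and (3b) follows by starting from an equilibrium state $\mu$ on $J$, choosing any $\sigma$-invariant lift via (2), and noting that this lift must realize the supremum defining $\mathcal{P}_{top}(f\circ h)$. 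The main obstacle I anticipate is the no-entropy-drop statement in (1), because, unlike in the complex setting treated in \cite{Das21}, the space $\mathbb{C}_p$ is totally disconnected and the combinatorics of $h^{-1}(x)$ is governed by the choice of inverse branches rather than by a topological monodromy argument; the crucial input is that the labelling in Proposition \ref{prop:coding} forces essentially injective behavior of $h$ outside a thin exceptional set controlled by $P(\phi)\cap J$.
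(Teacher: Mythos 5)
Your treatment of parts (2) and (3) matches the strategy the paper uses by citing \cite[Sections 3 and 4]{Das21}: (2) is the Krylov--Bogolyubov averaging of an arbitrary Borel lift, and (3) follows formally from (1), (2) and the variational principle once $\mathcal{P}_{top}(f)=\mathcal{P}_{top}(f\circ h)$ is established. Those pieces are fine and are exactly what the reference proves.

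The gap is in part (1). Your argument rests entirely on the unproved assertion that $h(\alpha)=h(\alpha')$ for distinct $\alpha,\alpha'\in\Sigma$ forces some forward iterate of $h(\alpha)$ to lie in $P(\phi)\cap J$. Nothing in the construction of Proposition \ref{prop:coding} establishes this: the branch-labelling rule is a greedy ``nearest preimage'' choice with an arbitrary tie-breaking convention, and it does not canonically pin down inverse branches. One cannot reduce the fiber structure of $h$ to the (finite) postcritical set this way. Note that the analogous statement is \emph{false} in the model setting one is imitating: two distinct external rays for a complex polynomial frequently co-land at a non-postcritical Julia point, so ``co-landing $\Rightarrow$ postcritical orbit'' is not a feature one should expect of a geometric coding tree without a dedicated argument. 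Even if the claim happened to be true here, your two proposed ways of exploiting it (a dichotomy on $\nu(E)$ with an Abramov--Rokhlin count, or Ledrappier--Walters plus ``eventual synchronization'') both presuppose the same unproved structure of $h^{-1}(x)$, so neither closes the loop.

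What the cited proof actually does is different and more robust. The no-entropy-drop statement in \cite[Sections 3--4]{Das21} (following Przytycki's geometric coding tree argument \cite{Przytycki85} and \cite[\S 4.5]{Przytycki10}) does not attempt to describe $h^{-1}(x)$ explicitly; instead it bounds the number of $n$-cylinders meeting a single fiber (equivalently, the number of tree branches landing in a small ball) uniformly in $n$, and feeds that combinatorial bound into the relative variational principle to conclude that the conditional entropy of $\sigma$ over $\phi$ vanishes. This avoids any claim about co-landing happening only at postcritical points. To repair your proof you would either need to actually prove the fiber-structure claim in the $p$-adic setting (plausible given total disconnectedness, but a genuine lemma), or replace it with the cylinder-counting argument of Przytycki/Das et al.
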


Then existence and uniqueness of $\mu_f$ and statements (1)-(4) in Theorem \ref{thm:CLT} follow from Lemma \ref{lem:same} and  the corresponding statements in the dynamical system $(\Sigma,\sigma)$, see \cite[Lemma 4.4 and the proof of Theorem 1.1 (1)-(5) in p. 180]{Das21}. This is based on \cite[Theorem 5.4.9 and Theorem 5.7.1]{Przytycki10} and \cite[Theorem 1.1]{Denker01}

Moreover, since for any finite extension $L$ of $K$, the map $\phi$ uniformly expands disks in $L$ with respect to $\rho_L:=\rho_{L,1}$ defined in \eqref{rho},
by Corollary \ref{cor:Holder}, we can obtain statements (5)-(6) in Theorem \ref{thm:CLT}, applying a similar argument as in \cite[Section 6]{Das21}. \hfill\qedsymbol

\bibliographystyle{acm}

\end{document}